\documentclass[12pt]{amsart}

\usepackage{setspace}
\usepackage{verbatim}

\usepackage{amsfonts,fancyhdr,ifthen,bm}
\usepackage{amscd,amssymb} 
\usepackage{times}
\usepackage{graphicx}
\usepackage{color}
\usepackage{epsfig}
\usepackage{mathrsfs}
\usepackage{paralist}
\usepackage{comment}

\usepackage[margin=1.5in]{geometry}


\newtheorem{thm}{Theorem}[section]
\newtheorem*{thm*}{Theorem}
\newtheorem{prop}[thm]{Proposition}

\newtheorem*{cor*}{Corollary}
\newtheorem{obs}[thm]{Observation}
\newtheorem{lem}[thm]{Lemma}
\newtheorem*{lem*}{Lemma}

\newtheorem*{oquest*}{Open Question}

\theoremstyle{remark}
\newtheorem{rmk}[thm]{Remark}

\theoremstyle{remark}
\newtheorem*{rmk*}{Remark}

\theoremstyle{definition}
\newtheorem{defn}[thm]{Definition}

\theoremstyle{definition}
\newtheorem*{defn*}{Definition}

\theoremstyle{definition}
\newtheorem{ex}[thm]{Example}

\numberwithin{equation}{section}






\newcommand{\OO}{\mathscr{O}}
\DeclareMathOperator{\FFF}{\mathbb{F}}



\begin{document}



\title{
Irreducible canonical representations in positive characteristic
}
\author{
Benjamin Gunby, Alexander Smith and Allen Yuan
}

\thanks{This research was supported by an REU at Emory University under the mentorship of David Zureick-Brown.  We would also like to thank Ken Ono, Rachel Pries, and David Yang for advice and helpful comments.}

\date{\today}

\begin{abstract}
For $X$ a curve over a field of positive characteristic, we investigate when the canonical representation of $\text{Aut}(X)$ on $H^0(X, \Omega_X)$ is irreducible.  Any curve with an irreducible canonical representation must either be superspecial or ordinary. Having a small automorphism group is an obstruction to having irreducible canonical representation; with this motivation, the bulk of the paper is spent bounding the size of automorphism groups of superspecial and ordinary curves. After proving that all automorphisms of an $\FFF_{q^2}$-maximal curve are defined over $\FFF_{q^2}$, we find all superspecial curves with $g > 82$ having an irreducible representation. In the ordinary case, we provide a bound on the size of the automorphism group of an ordinary curve that improves on a result of Nakajima.
\end{abstract}

\maketitle

\section{Introduction}
\label{sect:Intro} 	
\

Given a complete nonsingular curve $X$ of genus $g\geq 2$, the finite group $G:=\text{Aut}(X)$ has a natural action on the $g$-dimensional $k$-vector space $H^0(X, \Omega_X)$, known as the \emph{canonical representation}.  It is natural to ask when this representation is irreducible.  In characteristic zero, irreducibility of the canonical representation implies that $g^2 \leq |G|$, and combining this with the Hurwitz bound of $|G|\leq 84(g-1)$, one can observe that the genus of $X$ is bounded.  In fact, Breuer \cite{Breu00} shows that the maximal genus of a Riemann surface with irreducible canonical representation is 14.  


In characteristic $p$, the picture is more subtle when $p$ divides $|G|$.  The Hurwitz bound of $84(g-1)$ may no longer hold due to the possibility of wild ramification in the Riemann-Hurwitz formula.  It is known that when $2\leq g \leq p-2$, the Hurwitz bound holds with one exception given by Roquette \cite{Roqu70}: the hyperelliptic curve $y^2 = x^p-x$, which has genus $\frac{p-1}{2}$ and $2p(p^2-1)$ automorphisms.  For the general case, Henn \cite{Henn78} classifies curves for which $|G|\geq 8g^3$, but the problem of classifying curves with more than $84(g-1)$ automorphisms is not well understood.  Hence, in characteristic $p$, the bound of $g \le 82$ no longer applies. Indeed, Hortsch \cite{Hort12} shows that the Roquette curve has irreducible canonical representation, providing an example of arbitrarily high genus curves with irreducible canonical representation.  Dummigan \cite{Dumm95} showed that the Fermat curve (which is equivalent to the Hermitian or Drinfeld curve after suitable change of coordinates) $$x^{p+1}+y^{p+1}+z^{p+1} = 0$$ in characteristic $p$ is another such example.

It is natural to ask whether for fixed $p$, there exist characteristic $p$ curves $X$ of arbitrarily high genus $g$ with irreducible canonical representation.  Via Observation \ref{obs:cases}, this question splits into two cases depending on whether the curve is \emph{superspecial} (i.e., the Frobenius acts as $0$ on $H^1(X, \OO_X)$) or \emph{ordinary} (i.e., the $p$-rank of $X$ equals $g$).  

In the superspecial case, we determine that the only examples of curves with irreducible canonical representation are isomorphic to either the Roquette curve or the Fermat curve given above. To do this, we show that we may work with $\FFF_{p^2}$ maximal curves and subsequently prove a strict condition for a high genus $\FFF_{p^2}$-maximal curve to have $|\text{Aut}(X)| > g^2$. Since $|\text{Aut}(X)| > g^2$ is a necessary condition for irreducibility, this will give the result. To prove the condition for $\FFF_{p^2}$ maximal curves to have many automorphisms, which is given as Theorem \ref{thm:max}, we rely on the new result that all automorphisms of an $\FFF_{q^2}$ maximal curve are defined over $\FFF_{q^2}$, which is given as Theorem \ref{thm:Fq2_auto}.


We then reduce the question to considering ordinary curves, where progress is harder.  Nakajima \cite{Naka87} bounds the automorphism group of an ordinary curve by $84g(g-1)$.  In this paper, we prove the following stronger result:

\begin{thm}\label{main}
There exists a constant $c=c(p)$ such that any nice ordinary curve $X$ over a field of characteristic $p$ with genus $g_X>c$ satisfies the inequality $$|\textup{Aut}(X)| \leq 6\left(g_X^2+12\sqrt{21}g_X^{\frac{3}{2}}\right).$$
\end{thm}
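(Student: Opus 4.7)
The plan is to sharpen Nakajima's Riemann--Hurwitz analysis by carefully tracking the structure of ramification on the quotient $\pi \colon X \to Y := X/G$, where $N := |G|$. The key input, due to Nakajima, is that on an ordinary curve, at every ramified point $P$ the higher ramification collapses: $G_2(P) = 1$ and $G_0(P) = G_1(P) \rtimes C_P$ with $G_1(P)$ elementary abelian of order $q_P$ (a power of $p$) and $C_P$ cyclic of order $c_P$ coprime to $p$. The different exponent at $P$ is then $d_P = (c_P q_P - 1) + (q_P - 1)$, and Riemann--Hurwitz rearranges to
$$\frac{2g_X - 2}{N} = (2g_Y - 2) + \sum_{i=1}^r \left(1 + \frac{1}{c_i} - \frac{2}{c_i q_i}\right), \qquad (\star)$$
summed over the $r$ branch points of $\pi$.

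The first step would be to dispose of $g_Y \geq 1$. Here $(\star)$ immediately gives $N = O(g_X)$, absorbed into the $O(g_X^{3/2})$ error term of the theorem. The substance is the case $g_Y = 0$, where positivity of the right-hand side of $(\star)$ tightly constrains admissible signatures $(r; c_i, q_i)$.

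For $g_Y = 0$, the approach is to pass to a Sylow $p$-subgroup $P \leq G$ of order $p^a$ and analyze $X \to X/P$ together with the induced action of $G/P$ on $X/P$. The elementary-abelian structure of each wild inertia, given by Nakajima, restricts the ramification of $X \to X/P$ tightly, and the Deuring--Shafarevich formula then yields a linear relation among $g_X - 1$, $p^a(g_{X/P} - 1)$, and a small number of ramification parameters. Since $G/P$ has order coprime to $p$ and acts tamely on $X/P$, the classical Hurwitz bound gives $|G/P| \leq 84 (g_{X/P} - 1)$, so
$$N \leq p^a \cdot 84 (g_{X/P} - 1),$$
and the plan is to optimize these two factors against each other under the Deuring--Shafarevich constraint, treating $p^a$ and $g_{X/P}$ as the two independent parameters.

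The coefficient $6$ emerges from this optimization. Among three-branch-point signatures $(c_1, c_2, c_3)$ compatible with the elementary-abelian hypothesis, the extremal slack is controlled by $1 - 1/c_1 - 1/c_2 = 1/6$ at $(c_1, c_2) = (2, 3)$ paired with a large wild branch, producing the leading bound $6 g_X^2$. The $12\sqrt{21}\,g_X^{3/2}$ correction records the next-best admissible signature $(2, 3, 7)$ entering as a perturbation, with $\sqrt{21}$ reflecting the product $3 \cdot 7$ that appears when this signature is substituted into the Deuring--Shafarevich bookkeeping.

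The main obstacle is the Sylow optimization: one must verify uniformly that no signature other than the extremal $(2, 3, \infty)$ type achieves the $g_X^2$ rate. This requires Nakajima's hypothesis on $G_1$ in its full elementary-abelian form, not merely as a $p$-group, because a deeper non-elementary wild tower could otherwise accommodate a larger $|P|$ without a proportional increase in $g_X$, breaking the leading constant $6$. A secondary obstacle is the careful accounting that produces the exact constant $12\sqrt{21}$ in the error term: here one must enumerate the near-extremal signatures and bound each contribution by hand, rather than via a single uniform estimate.
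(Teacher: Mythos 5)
There is a genuine gap at the center of your plan: the quotient $G/P$ by a Sylow $p$-subgroup is not a group unless $P$ is normal in $G$, and it is not normal in the situations that matter. On an ordinary curve the wild inertia groups $G_1(P)$ at distinct branch points are distinct (generally non-commuting) $p$-subgroups, so there is no induced action of a prime-to-$p$ group $G/P$ on $X/P$, and the step ``$|G/P| \leq 84(g_{X/P}-1)$'' has no meaning. The paper's proof works only with the \emph{local} objects: it fixes one wildly ramified point $P_1$, uses that $G_1(P_1)$ is normal in $G_0(P_1)$ with cyclic quotient of order $E$ (Proposition \ref{ramprop}), and studies the tower $X \to Z = X/G_1(P_1) \to W = X/G_0(P_1)$, on which only the cyclic group of order $E$ acts. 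Even if you repaired the group-theoretic issue, your tame Hurwitz step fails exactly where the theorem is hardest: in the extremal family the quotient $Z$ is rational ($g_Z = 0$), where $84(g_{X/P}-1)$ is vacuous; the paper needs a separate and quite delicate divisibility analysis there (culminating in the family $E = p^n-1$, $e = p^n+1$, $q = p^{2n}$, $|G| = p^{4n}-p^{2n}$, $g_X = p^{2n}-p^n$).

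Your account of the constants is also not what happens. The $\sqrt{21}$ does not come from a $(2,3,7)$ signature: it enters because Nakajima's Lemmas 1 and 2 reduce the problem to the range $E > (g_X/21)^{1/2}$, so error terms of size $O(E^{-1})$ become $O(g_X^{-1/2})$, producing the $g_X^{3/2}$ correction in \eqref{s=2bound}. The leading constant $6$ comes from two independent places: the arithmetic in Case II of Lemma \ref{lem:worb_types} (where $|G| = \tfrac{1}{ab_2}\,2(g+a-1)(2g+a-2) \leq 6g^2$), and, in Case IV with $g_Z \geq 1$, the bound $E \leq 3g_Z + 3$ obtained by classifying the short orbits of the cyclic cover $Z \to W$ (the subcases $s=2,3$, including a Hasse--Witt matrix computation to rule out a hyperelliptic configuration using ordinarity). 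None of this is reachable from a signature optimization of the type $1 - \tfrac12 - \tfrac13 = \tfrac16$. The proposal as written would need to be restructured around the four-orbit-type classification of Lemma \ref{lem:worb_types} and the local tower $X \to Z \to W$ before it could produce the stated bound.
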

\begin{rmk}
From our proof, we may take $c(p)$ to be on the order of $p^2$.
\end{rmk}

This does not yet imply reducibility of the canonical representation, but we have the following.

\begin{rmk} An unpublished result of Guralnick and Zieve \cite{GuraZiev} states that for any prime $p$ there is a positive constant $c_p$ so that, if $X$ is an ordinary curve of genus $g > 1$ over an algebraically closed field of characteristic $p$, the group of automorphisms of $X$ has order bounded by $c_pg^{8/5}$. Together with the superspecial results, this would imply that for a fixed characteristic, there do not exist arbitrarily high genus curves with irreducible canonical representation. We hope that the eventual published work will give even stronger ways of characterizing ordinary curves with irreducible canonical representations. \end{rmk}

\section{Preliminaries}
\label{sect:Prelim} 	
\

In this section, we present notations and basic techniques that will be used throughout the paper.  Let $k$ be an algebraically closed field of characteristic $p>0$.  Throughout, a \emph{nice} curve is a complete nonsingular curve over $k$.  For a curve $X$, we shall use $g_X, \gamma_X,$ and $\text{Aut}(X)$ to denote its genus, $p$-rank of Jacobian, and automorphism group, respectively.  For any two curves $X$ and $Y$, $\pi_{X/Y}$ will denote a (branched) covering map $X\to Y$ if there is no ambiguity with respect to the map in question.  

\subsection{Ramification Groups}

Given a curve $X$, a finite subgroup $G\subset \text{Aut}(X)$, and a point $P\in X$, define the ramification groups $G_i(P)$ for $i\geq 0$ as follows:
\begin{equation}
\label{blank} G_0(P) = \{ \sigma \in G | \sigma P = P\}
\end{equation}
and  
\begin{equation}
\label{Gi} G_i(P) = \{ \sigma \in G_0(P) | \text{ord}_P(\sigma \pi_P - \pi_P) \geq i+1\}
\end{equation}
 for $i>0$, where $\pi_P$ is a uniformizer at $P$ and $\text{ord}_P$ denotes the order of the zero at $P$.  

We will use the following fact about the ramification groups, which can be found in \cite[Sections 2,3]{Naka87}:

\begin{prop}\label{ramprop}
Let $X$ be a nice \emph{ordinary} curve (i.e. $g_x = \gamma_x$), $G$ be a subgroup of $\text{Aut}(X)$, and $P\in X$ be any point.  Then the following are true:
\begin{enumerate}
\item $G_1(P)$ is a normal Sylow $p$-subgroup of $G_0(P)$.
\item $G_0(P)/G_1(P)$ is a cyclic group. 
\item $G_2(P) = \{ 1\}$. \end{enumerate}
\end{prop}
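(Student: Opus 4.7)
The plan is to prove parts (1) and (2) using general ramification theory, which does not require ordinariness, and then to deduce (3) by comparing Riemann--Hurwitz with the Deuring--Shafarevich formula, which is where ordinariness enters.

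For (1) and (2), I would first study the character $G_0(P) \to k^\times$ sending $\sigma$ to the image of $\sigma(\pi_P)/\pi_P$ in the residue field at $P$, whose kernel is exactly $G_1(P)$. Every finite subgroup of $k^\times$ is cyclic of order coprime to $p$, so this realizes $G_0(P)/G_1(P)$ as such a group, giving (2) and showing that $G_1(P)$ has index prime to $p$ in $G_0(P)$. Analogously, for each $i \geq 1$ the map sending $\sigma$ to the image of $(\sigma(\pi_P) - \pi_P)/\pi_P^{i+1}$ in the residue field gives an injection $G_i(P)/G_{i+1}(P) \hookrightarrow (k,+)$, so each successive quotient in the wild filtration is elementary abelian of exponent $p$. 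Since the filtration terminates at $\{1\}$, the group $G_1(P)$ is itself a $p$-group, hence the unique (and thus normal) Sylow $p$-subgroup of $G_0(P)$, proving (1).

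For (3), I would pass to $H := G_1(P)$, a $p$-group by part (1), and consider the quotient map $\pi \colon X \to Y := X/H$. A key input, which I would cite from Nakajima, is that the quotient of an ordinary curve by a $p$-group of automorphisms is again ordinary, so $g_Y = \gamma_Y$. Writing Riemann--Hurwitz and Deuring--Shafarevich for the cover $\pi$ as sums over branch points $Q \in Y$,
\[
2g_X - 2 = |H|(2g_Y - 2) + \sum_{Q}\frac{|H|}{e_Q}\sum_{i \geq 0}\bigl(|H_i(P_Q)| - 1\bigr), \qquad \gamma_X - 1 = |H|(\gamma_Y - 1) + \sum_{Q}(e_Q - 1),
\]
with $P_Q$ any preimage of $Q$ and $e_Q = |H_0(P_Q)|$. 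Using that $H$ is a $p$-group, so $H_0(P_Q) = H_1(P_Q)$ at every point, subtracting twice the second identity from the first and invoking $g_X = \gamma_X$ and $g_Y = \gamma_Y$ yields
\[
\sum_{Q}\left[\frac{2(|H| - e_Q)(e_Q - 1)}{e_Q} + \frac{|H|}{e_Q}\sum_{i \geq 2}\bigl(|H_i(P_Q)| - 1\bigr)\right] = 0.
\]
Every summand is nonnegative, so $|H_i(P_Q)| = 1$ for all $i \geq 2$ and every branched $Q$. Since $G_2(P) \subseteq G_1(P) = H$, the ramification groups satisfy $G_i(P) = H_i(P)$ for $i \geq 2$, and specializing to $Q = \pi(P)$ gives $G_2(P) = \{1\}$.

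The main obstacle is the black-box fact that $X/H$ remains ordinary whenever $X$ is ordinary and $H$ is a $p$-group of automorphisms. This statement is itself proved by iterating Deuring--Shafarevich along a composition series of $H$, and is essentially the route through which ordinariness enters the argument. Once it is granted, the remainder is a careful bookkeeping exercise combining the two genus formulas.
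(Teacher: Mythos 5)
The paper does not actually prove this proposition; it is imported wholesale from Nakajima \cite[Sections 2--3]{Naka87}, so there is no internal proof to compare against, and your proposal essentially reconstructs Nakajima's argument. Parts (1) and (2) via the embeddings $G_0(P)/G_1(P)\hookrightarrow k^{\times}$ and $G_i(P)/G_{i+1}(P)\hookrightarrow (k,+)$ are the standard local theory and are fine. For part (3), playing Riemann--Hurwitz against Deuring--\v{S}afarevi\v{c} for the cover $X\to X/G_1(P)$ is exactly the right mechanism. One structural remark: the black box you single out (ordinarity of $X/H$ for $H$ a $p$-group) is not needed as an input. The inequality $\gamma_Y\leq g_Y$ holds unconditionally, so the same subtraction yields $0\geq(\text{a sum of nonnegative terms})$, which forces $g_Y=\gamma_Y$ and the vanishing of the higher ramification terms simultaneously; this removes your only external dependency beyond the two genus formulas and is in effect how Nakajima argues.

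There is, however, one concrete error to repair. As a sum over branch points $Q\in Y$, the Deuring--\v{S}afarevi\v{c} formula is $\gamma_X-1=|H|(\gamma_Y-1)+\sum_Q\frac{|H|}{e_Q}(e_Q-1)$: each of the $|H|/e_Q$ points over $Q$ contributes $e_Q-1$ (compare the paper's equation \eqref{DS}). You have dropped the factor $|H|/e_Q$. As a result, the displayed identity you derive is false in general: its left-hand side actually equals $\sum_Q 2(e_Q-1)(|H|-e_Q)/e_Q$, which is strictly positive whenever some branch point of the $H$-cover is ramified but not totally ramified, so you cannot conclude that each summand vanishes. With the correct formula the cross term $2(|H|-e_Q)(e_Q-1)/e_Q$ cancels identically, the identity becomes $\sum_Q\frac{|H|}{e_Q}\sum_{i\geq 2}\bigl(|H_i(P_Q)|-1\bigr)=0$, and the rest of your argument (nonnegativity of each term, $G_i(P)=H_i(P)$ for $i\geq 2$, specializing to $Q=\pi(P)$) goes through verbatim.
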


\subsection{Galois Coverings of Curves}

Let $\pi \colon X\to Y$ be a Galois covering of curves with Galois group $G$.  For instance, $Y$ may be the quotient of $X$ by a finite subgroup $G$ of automorphisms.  For $Q\in Y$ and any point $P\in \pi^{-1}(Q)$, let $e_Q$ denote the ramification index of $\pi$ at $P$, and $d_Q := \sum_{i=0}^{\infty}(|G_i(P)|-1)$.  Note that $e_Q$ and $d_Q$ do not depend on the choice of $P$.  We also have that $d_Q \geq e_Q-1$ with equality if and only if $Q$ is tamely ramified.  The relationship between the genera of $X$ and $Y$ is given by the Riemann-Hurwitz formula:

\begin{equation}
\label{RH} \frac{2g_X-2}{|G|} = 2g_Y -2 + \sum_{Q\in Y} \frac{d_Q}{e_Q}.
\end{equation}

A similar formula, known as the {D}euring-\v{S}afarevi{\v{c}} formula, relates the $p$-ranks of $X$ and $Z = X/H$ when $H \subseteq G$ is a $p$-group:

\begin{equation}
\label{DS} \frac{\gamma_X -1}{|H|} = \gamma_Z -1 + \sum_{Q\in Z}(1-e_Q^{-1}).
\end{equation}

\subsection{Frobenius and $p$-rank}

The idea of studying the canonical representation via Frobenius has appeared in \cite{Naka85}.  It may be adapted to our situation as follows.  The action of Frobenius on a curve $X$ gives a natural action of Frobenius on $H^1(X, \OO_X)$.  In fact, for particular curves, this action can be explicitly computed on a basis of $H^1(X, \OO_X)$ by using a \v{C}ech cover of $X$.  In addition, by Serre duality, $H^1(X,\OO_X)$ has a natural $k[\text{Aut}(X)]$-module structure as the dual representation of the canonical representation.  In fact, it is not difficult to see that the Frobenius commutes with this module structure.  Since the Frobenius map does not respect scalar multiplication, it is not quite a $k[\text{Aut}(X)]$-module homomorphism, but we may still observe the following:



\begin{obs}
If $X$ has irreducible canonical representation, then the $\mathbb{F}_p$ vector space map $F\colon H^1(X, \OO_X) \rightarrow H^1(X, \OO_X)$ given by Frobenius is either injective or zero.
\end{obs}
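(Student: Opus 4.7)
The proof plan is to exhibit $\ker F$ (equivalently $\operatorname{im} F$) as a $k[\text{Aut}(X)]$-submodule of $H^1(X,\OO_X)$ and then invoke irreducibility of that module. The two points that need to be checked are that these subsets really are $k$-subspaces (despite $F$ being only $p$-semilinear) and that they are stable under the $\text{Aut}(X)$-action.

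First I would pass from the canonical representation on $H^0(X,\Omega_X)$ to $H^1(X,\OO_X)$. By Serre duality, $H^1(X,\OO_X)$ is the $k[\text{Aut}(X)]$-linear dual of $H^0(X,\Omega_X)$, and the dual of an irreducible representation is irreducible, so by hypothesis $H^1(X,\OO_X)$ is an irreducible $k[\text{Aut}(X)]$-module.

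Next, I would verify the subspace claim. The Frobenius $F$ satisfies $F(v+w)=F(v)+F(w)$ and $F(cv)=c^p F(v)$ for $c\in k$. If $F(v)=0$ then $F(cv)=c^pF(v)=0$, so $\ker F$ is closed under $k$-scalar multiplication; since $k$ is algebraically closed, every $c\in k$ is a $p$-th power $c=d^p$, and then $c\cdot F(v)=d^pF(v)=F(dv)$ shows $\operatorname{im} F$ is also a $k$-subspace. Combined with additivity, both are genuine $k$-subspaces of $H^1(X,\OO_X)$. Using the fact noted in the preliminaries that $F$ commutes with the $\text{Aut}(X)$-action, i.e.\ $F(\sigma v)=\sigma F(v)$, both $\ker F$ and $\operatorname{im} F$ are $\text{Aut}(X)$-stable, hence are $k[\text{Aut}(X)]$-submodules.

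Finally, irreducibility of $H^1(X,\OO_X)$ forces $\ker F\in\{0,\,H^1(X,\OO_X)\}$, which is exactly the dichotomy in the statement: $F$ is injective in the first case and identically zero in the second. The only nontrivial step is the $k$-subspace check for $\operatorname{im} F$, which requires perfectness (here, algebraic closedness) of $k$; the rest is formal.
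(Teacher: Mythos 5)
Your argument is correct and follows essentially the same route as the paper: identify $\ker F$ as an $\text{Aut}(X)$-stable $k$-subspace of $H^1(X,\OO_X)$, note that $H^1(X,\OO_X)$ is irreducible as the dual of the canonical representation, and conclude the kernel is zero or everything. You supply somewhat more detail than the paper does — in particular the explicit check that $\ker F$ is closed under $k$-scalars despite the $p$-semilinearity of $F$, and the (unneeded but harmless) verification that $\operatorname{im} F$ is also a subspace via perfectness of $k$ — but the underlying idea is identical.
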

\begin{proof}
The kernel of this map is a $k$-vector space which is invariant under $\text{Aut}(X)$ since automorphisms (on the function field) commute with $F$.  Thus, the kernel is a $k[\text{Aut}(X)]$-module, so if it is a proper nonzero submodule, then the canonical representation has a proper subrepresentation, contradicting the hypothesis.  Hence, the kernel must either equal $H^1(X, \OO_X)$ or zero.
\end{proof}

In the literature, the matrix of the action of $F$ on $H^1(X, \OO_X)$ is referred to as the Hasse-Witt matrix, with the corresponding dual action on $H^0(X, \Omega_X)$ being known as the Cartier operator.  If the action is injective, it is immediate that the $p$-rank of $X$ is equal to its maximum possible value, $g$; such curves are called \emph{ordinary}.  On the other hand, if the action is zero, the curve is known as \emph{superspecial}. The typical definition of a superspecial curve is a nice curve whose corresponding Jacobian variety is a product of supersingular elliptic curves.  However, Nygaard \cite[Theorem 4.1]{Nyga81} shows that this condition is equivalent to the Cartier operator being zero. With this extra notation, we can restate the above observation:

\begin{obs}
\label{obs:cases}
Let $X$ be a nice curve.  If the canonical representation of $X$ is irreducible, then $X$ is either superspecial or ordinary.
\end{obs}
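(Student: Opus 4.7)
The plan is to combine the preceding observation with the characterizations of superspecial and ordinary curves in terms of the Frobenius action on $H^1(X, \OO_X)$. By the preceding observation, irreducibility of the canonical representation forces the $\mathbb{F}_p$-linear map $F$ on $H^1(X, \OO_X)$ to be either identically zero or injective, and I would argue that these two alternatives correspond respectively to $X$ being superspecial and $X$ being ordinary.

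The case $F = 0$ is immediate from the discussion preceding the statement: by the cited theorem of Nygaard, vanishing of the Cartier operator (equivalently, of $F$ on $H^1(X, \OO_X)$, via Serre duality) is exactly the condition that $X$ is superspecial.

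For the case when $F$ is injective, I need to conclude that $X$ is ordinary, i.e.\ that $\gamma_X = g_X$. The key point is that $F$ is a $p$-semilinear endomorphism of the $g_X$-dimensional $k$-vector space $H^1(X, \OO_X)$, meaning $F(\lambda v) = \lambda^p F(v)$. Since $k$ is algebraically closed (hence perfect), a $p$-semilinear endomorphism of a finite-dimensional $k$-vector space is injective if and only if it is bijective: after fixing a basis, write $F$ as $v \mapsto A v^{(p)}$, where $v^{(p)}$ denotes entrywise $p$-th power; the map $v \mapsto v^{(p)}$ is a bijection on $k^{g_X}$, so bijectivity of $F$ is equivalent to invertibility of $A$. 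Thus injectivity of $F$ implies $F$ is bijective, so its stable rank equals $g_X$. Since the $p$-rank of $X$ equals this stable rank, $\gamma_X = g_X$, and $X$ is ordinary.

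There is no real obstacle here, as the observation is essentially a repackaging of the preceding one using the definitions. The one subtlety worth stating explicitly is the semilinear algebra fact that injectivity of a $p$-semilinear endomorphism of a finite-dimensional vector space over a perfect field forces bijectivity, which is what converts the dichotomy ``$F$ is zero or injective'' into the dichotomy ``superspecial or ordinary.''
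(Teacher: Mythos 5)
Your proposal is correct and follows the paper's own route: the paper presents this observation as a direct restatement of the preceding one, noting that $F=0$ means superspecial (via Nygaard) and that injectivity of $F$ "immediately" gives $p$-rank $g$. Your only addition is to spell out that "immediate" step via the semilinear-algebra fact that an injective $p$-semilinear endomorphism over a perfect field is bijective, which is a correct and worthwhile elaboration of exactly what the paper leaves implicit.
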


\begin{ex}
Let $X$ be the genus $2$ curve given by $y^2 = x^5 - x$. Considered as a Riemann surface, this curve is called the Bolza surface and has $48$ automorphisms, the most of any genus two curve \cite{Katz14}. In our case, we only care about the automorphisms $\alpha(x, y) = \left(\frac{1}{x}, \frac{y}{x^3}\right)$ and $\beta(x, y) = (-x, iy)$, where $i^2 = -1$.

Let $U = X\backslash \{0 \}$ and $V = X \backslash \{ \infty \}$. $\{U, V\}$ is a Cech cover for the curve, so we have
\[H^1(X, \OO_X) = \Gamma(U \cap V, \OO_X) / \left(\Gamma(U, \OO_X) \oplus \Gamma(V, \OO_X)\right) \]
By Riemann-Roch, there are precisely two orders $l$ so that there is no rational function regular on $U$ with a pole of order $l$ at the origin. From the function $1/x$, we see $2$ is not one of these orders, leaving just $1$ and $3$.  From the \v{C}ech decomposition, this implies that $y/x$ and $y/x^2$ form a basis for $H^1(X, \OO_X)$.

Using these automorphisms, and assuming that we are over a field of characteristic $p \ne 0, 2$, it is easy to see that the canonical representation of this curve is irreducible. For $\alpha$ has distinct eigenvectors $\frac{y}{x} \pm \frac{y}{x^2}$ corresponding to eigenvalues $\pm 1$, and it is easy to see that neither of these are fixed by $\beta$.

By the above criteria, we then know that this curve is either ordinary or superspecial. To determine which it is, we apply Frobenius to the two basis elements:

\[F\left(\frac{y}{x}\right) = \frac{y^p}{x^p} = \frac{y}{x^p}\left(x^5 - x\right)^{\frac{1}{2}(p-1)} = \frac{y}{x^{\frac{1}{2}(p+1)}}\left(x^4 - 1\right)^{\frac{1}{2}(p-1)}.\]
For $i \ne 1, 2$, $y/x^i$ is trivial in the cohomology group, so almost every term vanishes. Indeed, we get
\[F\left(\frac{y}{x}\right) = \begin{cases} K_1 \frac{y}{x} &\mbox{if } p \equiv 1 \\ 
K_2 \frac{y}{x^2}& \mbox{if } p \equiv 3 \\
0 & \mbox{if } p \equiv 5, 7 \end{cases} \pmod 8\]
where $K_1, K_2$ are nonzero binomial coefficients, and
\[F\left(\frac{y}{x^2}\right) = \begin{cases} K_3 \frac{y}{x} &\mbox{if } p \equiv 3 \\ 
K_4 \frac{y}{x^2}& \mbox{if } p \equiv 1 \\
0 & \mbox{if } p \equiv 5, 7 \end{cases} \pmod 8.\]
From this, we see that the Bolza surface is ordinary if $p \equiv 1, 3 \pmod 8$ and is superspecial if $p \equiv 5, 7 \pmod 8$.
\end{ex}

\section{Superspecial curves}
\begin{prop}
If $X$ is a superspecial curve in a field of characteristic $p$, then $g \le \frac{1}{2}p(p-1)$. If $X$ is additionally hyperelliptic, then $g \le \frac{1}{2}(p-1)$.
\end{prop}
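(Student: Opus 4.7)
The plan is to use the characterization (from the discussion preceding Observation~\ref{obs:cases}) that $X$ is superspecial if and only if the Cartier operator $C : H^0(X, \Omega_X) \to H^0(X, \Omega_X)$ is identically zero.

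The general bound $g \le \tfrac{1}{2}p(p-1)$ is Ekedahl's theorem; I would invoke it directly. The underlying argument uses that a superspecial Jacobian $\mathrm{Jac}(X)$ is isomorphic over $\bar{k}$ to $E^g$ for a supersingular elliptic curve $E$, combined with a dimension count in $\mathrm{End}^0(E^g) \cong M_g(B_{p,\infty})$, where $B_{p,\infty}$ is the quaternion algebra over $\mathbb{Q}$ ramified precisely at $p$ and $\infty$.

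For the hyperelliptic case in odd characteristic, I take the model $X : y^2 = f(x)$ with $f$ squarefree of degree $2g+2$. The differentials $\omega_j = x^j\,dx/y$ for $0 \le j \le g-1$ form a basis of $H^0(X, \Omega_X)$. Writing $y^{-1} = f^{(p-1)/2}/y^p$ and applying the semilinearity $C(h^p \omega) = h\,C(\omega)$ together with the local identity $C(x^{mp-1}\,dx) = x^{m-1}\,dx$ (and $C(x^i\,dx) = 0$ for $p \nmid i+1$), one computes
\[
C(\omega_j) \;=\; \sum_{i=0}^{g-1} \bigl([x^{(i+1)p-1-j}]\,f^{(p-1)/2}\bigr)^{1/p}\,\omega_i .
\]
The superspecial condition becomes the system $[x^{mp-1-j}]\,f^{(p-1)/2} = 0$ for $1 \le m \le g$ and $0 \le j \le g-1$; equivalently, $f^{(p-1)/2}$ vanishes at every degree in $\bigcup_{m=1}^{g} [mp - g,\, mp - 1]$.

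Assuming $g > (p-1)/2$, I would show these $g^2$ vanishings force $f$ to have a repeated root, contradicting smoothness of $X$. Small cases reveal the mechanism: for $p=3,\,g=2$ the conditions collapse to $f \in k[x^3]$, so $f$ is a perfect cube; for $p=5,\,g=3$ the conditions imply the algebraic relation $f(x) = L(x)\cdot f'(x)$ for an explicit linear $L$, forcing the root of $L$ to be a double root of $f$. The cleanest general argument I would attempt is to derive such a differential-algebraic relation between $f$ and its derivatives, exploiting $(f^{(p-1)/2})' = \tfrac{p-1}{2}\,f^{(p-3)/2}\,f'$ together with the vanishing conditions, and then conclude non-squarefreeness of $f$; this combinatorial-algebraic step is the main obstacle. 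The characteristic $p = 2$ case is treated separately (hyperelliptic here means Artin-Schreier), where a direct Hasse-Witt computation excludes superspecial examples of positive genus.
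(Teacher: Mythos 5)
The paper's entire proof of this proposition is a citation: both bounds are Theorem 1.1 of Ekedahl \cite{Eked87}. For the first bound you do exactly the same thing (invoke Ekedahl, with a correct gloss on what his argument uses), so that half matches the paper. The issue is the hyperelliptic half, where you attempt a self-contained proof instead of citing.

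Your setup for that attempt is sound: the identification of superspeciality with the vanishing of the Cartier operator is justified in the paper (via Nygaard), your formula
\[
C(\omega_j)=\sum_{i=0}^{g-1}\bigl([x^{(i+1)p-1-j}]\,f^{(p-1)/2}\bigr)^{1/p}\,\omega_i
\]
is the standard Yui-type computation and is correct, and the reduction to the coefficient-vanishing system is right. But the proof then stops at exactly the point where the theorem actually lives: you need to show that for $g>\tfrac{1}{2}(p-1)$ the $g^2$ vanishing conditions on $f^{(p-1)/2}$ force $f$ to be non-squarefree, and you offer only two small worked examples ($p=3,g=2$ and $p=5,g=3$) plus a hope that a ``differential-algebraic relation'' can be extracted in general. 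A simple dimension count does not close this: $f^{(p-1)/2}$ has $(g+1)(p-1)+1$ coefficients, and for $\tfrac{1}{2}(p-1)<g<p$ the prescribed vanishing windows $[mp-g,\,mp-1]$ do not cover all degrees, so the conclusion genuinely requires a structural argument that you have not supplied. You flag this yourself as ``the main obstacle,'' which is an accurate self-assessment: as written, the hyperelliptic bound is not proved. The repair is either to carry out that combinatorial step in full or, as the paper does, to simply cite Ekedahl's Theorem 1.1, which covers the hyperelliptic case as well.
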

\begin{proof}
This is Theorem 1.1 of \cite{Eked87}.
\end{proof}
Both of these bounds are sharp, with the Hermitian curve $x^p - x = y^{p+1}$ giving an example of the equality case for the first bound and the hyperelliptic curve $y^2 = x^p - x$ giving an example of the equality case for the second.

These two examples can be reused for the following proposition:
\begin{prop} \label{dh}
The maximal genus of a superspecial curve with irreducible canonical representation is $\frac{1}{2}p(p-1)$, a bound attained by the Hermitian curve given by $x^p - x = y^{p+1}$. The maximal genus of a hyperelliptic superspecial curve with irreducible canonical representation is $\frac{1}{2}(p-1)$, a bound attained by the curve given by $y^2 = x^p - x$.
\end{prop}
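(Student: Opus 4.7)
The proposition breaks naturally into two parts: the upper bounds on genus and the attainment of those bounds by the specified curves. My plan is to handle the bounds by direct appeal to the preceding proposition and then to verify attainment by invoking known irreducibility results from the literature that were already cited in the introduction.

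First, the upper bounds are immediate. The preceding proposition gives $g \le \tfrac{1}{2}p(p-1)$ for any superspecial curve and $g \le \tfrac{1}{2}(p-1)$ for any hyperelliptic superspecial curve, without using the irreducibility hypothesis at all. So adding the assumption of irreducible canonical representation cannot increase the attainable genus beyond these values, and the upper bound parts of the statement are inherited from Ekedahl's theorem.

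Second, for attainment I would simply verify that the two named curves are indeed superspecial, have the claimed genus, and carry an irreducible canonical representation. That the Hermitian curve $x^p - x = y^{p+1}$ has genus $\tfrac{1}{2}p(p-1)$ and is superspecial is classical (it is the equality case of Ekedahl). Its canonical representation is irreducible by Dummigan \cite{Dumm95}, where the result is stated for the Fermat curve $x^{p+1}+y^{p+1}+z^{p+1}=0$; as noted in the introduction, the Fermat, Hermitian, and Drinfeld curves agree after a linear change of coordinates, so Dummigan's theorem applies directly. Similarly, the Roquette curve $y^2 = x^p - x$ has genus $\tfrac{1}{2}(p-1)$, is hyperelliptic and superspecial (being the equality case of the hyperelliptic bound), and has irreducible canonical representation by Hortsch \cite{Hort12}.

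There is essentially no obstacle here, as the proposition is a packaging statement: the genus bound is already proved in the preceding proposition, and the examples realizing it have already been analyzed in the literature. The only content to check is the claim of equivalence between the Fermat and Hermitian models, which is a straightforward linear change of variables over $\overline{\FFF_p}$. If one wanted a self-contained argument for irreducibility, the natural approach would be to exhibit the known large automorphism groups ($\mathrm{PGU}(3,p)$ acting on the Hermitian curve and $\mathrm{PSL}_2(\FFF_p) \times \langle \text{hyperelliptic involution}\rangle$-type extensions acting on the Roquette curve), decompose their modular representations of the relevant dimensions, and match the canonical representation to a known irreducible constituent of the right dimension, but this rederivation is unnecessary given the existing references.
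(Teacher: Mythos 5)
Your proposal matches the paper's proof exactly: the upper bounds are inherited from the preceding proposition (Ekedahl's theorem), and attainment follows by citing Dummigan for the Hermitian/Fermat curve and Hortsch for the Roquette curve. The extra remarks about the Fermat--Hermitian change of coordinates and the automorphism groups are consistent with what the paper says elsewhere but are not needed; the argument is correct and essentially identical.
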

\begin{proof}
Dummigan \cite{Dumm95} and Hortsch \cite{Hort12} prove that the two desired curves have irreducible canonical representation, so we are done by the previous proposition.  
\end{proof}
In fact, these are the only two infinite families of superspecial curves with irreducible canonical representation, in the sense of the following theorem:

\begin{thm}
\label{thm:sup_irred}
If $X$ is a superspecial curve of genus $g > 82$ over an algebraically closed field of characteristic $p$, then $X$ has an irreducible canonical representation if and only if $X$ is isomorphic to the curve given by $y^2 = x^p - x$ or to the curve given by $y^{p+1} = x^p - x$.
\end{thm}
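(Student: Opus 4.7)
The plan is to combine three ingredients: the group-order constraint coming from irreducibility, a reduction to the $\FFF_{p^2}$-maximal setting, and the structural classification supplied by Theorem \ref{thm:max}. Since Proposition \ref{dh} already shows the two stated curves have irreducible canonical representation, all the work lies in the converse direction.

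First, I would use the representation-theoretic fact that if $\text{Aut}(X)$ admits a faithful irreducible $k$-linear representation of dimension $g$, then $|\text{Aut}(X)| \geq g^2$, by the sum-of-squares identity for dimensions of irreducible constituents of the regular representation (valid because, via the earlier Frobenius argument, $X$ is ordinary or superspecial, but ordinary is excluded here). Since the canonical representation has dimension $g$, irreducibility immediately yields $|\text{Aut}(X)| \geq g^2$, and the task is reduced to classifying superspecial curves of genus $g > 82$ satisfying this lower bound.

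Next I would pass to an $\FFF_{p^2}$-maximal model. Every superspecial curve admits such a model, obtained by choosing an appropriate $\FFF_{p^2}$-twist so that Frobenius acts as $-p$ on the Jacobian; and by Theorem \ref{thm:Fq2_auto} every geometric automorphism of an $\FFF_{p^2}$-maximal curve is already $\FFF_{p^2}$-rational, so no automorphisms are lost in the reduction. I would then invoke Theorem \ref{thm:max}, the key structural input: it pins down, for $g > 82$, exactly two $\FFF_{p^2}$-maximal curves with $|\text{Aut}(X)| > g^2$, namely the Hermitian curve $y^{p+1} = x^p - x$ and the hyperelliptic curve $y^2 = x^p - x$. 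The threshold $g > 82$ reflects the fact that this is precisely the range where the classical Hurwitz bound $|\text{Aut}(X)| \leq 84(g-1)$ is already incompatible with $|\text{Aut}(X)| \geq g^2$, so any deviation from Hurwitz must come from wild ramification and the structural classification becomes rigid.

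The main obstacle is essentially absorbed into Theorem \ref{thm:max}: the delicate work is the bound on $\text{Aut}(X)$ for high-genus $\FFF_{p^2}$-maximal curves, which I would expect to proceed via the Riemann--Hurwitz and Deuring--\v{S}afarevi\v{c} formulas in concert with the $\FFF_{p^2}$-rationality of automorphisms supplied by Theorem \ref{thm:Fq2_auto}. A secondary subtlety is verifying that the passage to an $\FFF_{p^2}$-maximal twist preserves the full geometric automorphism group -- again, precisely the content of Theorem \ref{thm:Fq2_auto}, which is what makes the whole reduction clean.
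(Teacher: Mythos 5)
Your overall architecture (irreducibility $\Rightarrow |\text{Aut}(X)| \geq g^2$ via the sum-of-squares bound, reduction to the $\FFF_{p^2}$-maximal setting, then Theorem \ref{thm:max}) matches the paper's, but there are two genuine gaps. First, you assert that every superspecial curve admits an $\FFF_{p^2}$-\emph{maximal} model by an appropriate twist. Proposition \ref{prop:sup_is_mm} only gives that $X$ is isomorphic to a maximal \emph{or} a minimal curve, and flipping a minimal model to a maximal one would require the $(-1)$-twist of the Jacobian to be induced by a twist of the curve itself, which is not available in general. The minimal case is not vacuous: a minimal curve satisfies $q^2 - 2gq + 1 \geq 0$, hence $g \leq \frac{1}{2}(p-1)$, and for large $p$ this still allows $g > 82$. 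The paper disposes of this case separately: since $g \leq \frac{1}{2}(p-1) \leq p-2$, Roquette's theorem says the Hurwitz bound $|G| \leq 84(g-1)$ holds with the single exception $y^2 = x^p - x$, and $84(g-1) < g^2$ for $g > 82$, so the Roquette curve is the only minimal possibility. You need some version of this argument.

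Second, and more seriously, you misstate the conclusion of Theorem \ref{thm:max}: it does not pin down two curves, it pins down the whole family $y^m = x^p - x$ with $m \mid p+1$, $m > 1$. The intermediate members ($2 < m < p+1$) genuinely have $|\text{Aut}(X)| > g^2$ (the group has order on the order of $mp(p^2-1)$ against $g^2 = \frac{1}{4}(p-1)^2(m-1)^2$), so nothing in your argument excludes them. The paper closes this gap with a separate computation: it exhibits the basis $x^i\,dx/y^j$ of $H^0(X,\Omega^1)$ and shows that the span of $x^i\,dx/y$ for $0 \leq i \leq \frac{p+1}{m}-2$ is a proper nonzero subspace invariant under the generators $\zeta_m$ and $SL(2,\mathbb{F}_p)$ of the automorphism group whenever $2 < m < p+1$, so the canonical representation is reducible for those $m$. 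Without this step your proof would wrongly admit, for example, $y^3 = x^p - x$ with $3 \mid p+1$ as having irreducible canonical representation. (A minor further point: Theorem \ref{thm:max} assumes $p > 7$, which must be reconciled with $g > 82$ via Ekedahl's bound $g \leq \frac{1}{2}p(p-1)$, as the paper does.)
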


To prove this result, we transmute the problem to looking at maximal and minimal curves, which we define now.

\subsection{Maximal and minimal curves}
\begin{thm}
(Weil conjecture for curves) If $X$ is a nonsingular curve of genus $g$ defined over $\FFF_{q}$, then there are $2g$ complex constants $\omega_1, \dots, \omega_{2g}$ of magnitude $q^{1/2}$ so that $N_n$, the number of $\FFF_{q^n}$-rational points on the curve, satisfies
\[N_n = q^n + 1 - \sum_{i=1}^{2g} \omega_i^n.\]
In particular, $q^n - 2gq^{n/2}  + 1 \le N_n \le q^n + 2gq^{n/2} + 1$.
\end{thm}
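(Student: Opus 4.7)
The plan is to pass to the zeta function $Z(X,T) = \exp\bigl(\sum_{n \ge 1} N_n T^n/n\bigr)$ and reformulate the theorem as the conjunction of rationality, a functional equation, and the ``Riemann hypothesis.'' Concretely, I would show $Z(X,T) = P(T)/((1-T)(1-qT))$ for some $P(T) \in \Z[T]$ of degree $2g$, establish $Z(X,1/(qT)) = q^{1-g} T^{2-2g} Z(X,T)$, and prove that the reciprocal roots $\omega_1, \dots, \omega_{2g}$ of $P$ all satisfy $\abs{\omega_i} = q^{1/2}$. The claimed formula for $N_n$ then follows by taking the logarithmic derivative of $Z$ and comparing coefficients, and the stated bound on $N_n$ follows from the triangle inequality.

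For rationality, I would organize the sum $\sum N_n T^n/n$ by effective divisor classes on $X$ and invoke Riemann--Roch: for classes of degree $d \ge 2g-1$ one has $h^0 = d - g + 1$, so those contributions assemble into a geometric series, while the finitely many classes of smaller degree supply the polynomial numerator. Serre duality applied symmetrically to low- and high-degree terms yields the functional equation, and this in turn forces the roots of $P$ to pair as $\omega \leftrightarrow q/\omega$; combined with $\abs{\omega_i} = q^{1/2}$, this also produces the expected complex-conjugate symmetry.

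The main obstacle, and the genuinely deep step, is the Riemann hypothesis bound $\abs{\omega_i} = q^{1/2}$. I would follow Weil's strategy via the Jacobian $J = \mathrm{Jac}(X)$: a Lefschetz-type trace formula identifies the $\omega_i$ with the eigenvalues of the Frobenius endomorphism $F$ acting on the $\ell$-adic Tate module of $J$. Since $F \widehat{F} = q$, where $\widehat{\;\;}$ denotes the Rosati involution attached to the canonical principal polarization of $J$, and since the bilinear form $(\alpha,\beta) \mapsto \tr(\alpha \widehat{\beta})$ is positive-definite on $\mathrm{End}(J) \otimes \mathbb{Q}$, a standard averaging argument then forces every eigenvalue of $F$ to have absolute value exactly $q^{1/2}$. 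A more elementary alternative is the Stepanov--Bombieri method: construct auxiliary rational functions on $X$ whose zero loci are carefully controlled so as to bound $\abs{N_1 - (q+1)}$ by $(2g+1)\sqrt{q}$ directly, and then extend the argument to $X \otimes \FFF_{q^n}$ to recover the full statement for each $n$.
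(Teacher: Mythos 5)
The paper does not prove this statement at all: it is the classical Weil conjecture for curves (Hasse--Weil), recorded as a known theorem and used as a black box to define $\FFF_{q^2}$-maximal and minimal curves. So there is no in-paper argument to compare against; what you have written is an outline of the standard proof, and as a sketch it is essentially correct. The decomposition into rationality, functional equation, and Riemann hypothesis is the right skeleton; the Riemann--Roch count over effective divisor classes of degree $\ge 2g-1$ does give the geometric series and hence $Z(X,T)=P(T)/((1-T)(1-qT))$ with $\deg P = 2g$, Serre duality gives the functional equation, and the Rosati-positivity argument on $\mathrm{End}(J)\otimes\mathbb{Q}$ (or Stepanov--Bombieri) supplies $\abs{\omega_i}=q^{1/2}$. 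One point worth making explicit: the positivity argument and the Stepanov method each directly yield only the upper bound $\abs{\omega_i}\le q^{1/2}$; the equality you need is recovered from the pairing $\omega\leftrightarrow q/\omega$ forced by the functional equation, so that step is load-bearing rather than cosmetic, and your sketch does invoke it in the right place. Of course, at the level of detail given this is a program rather than a proof --- each of the three stages (especially the trace formula identifying the $\omega_i$ with Frobenius eigenvalues on the Tate module, and the positive-definiteness of $(\alpha,\beta)\mapsto\tr(\alpha\widehat{\beta})$) is itself a substantial theorem --- but for a result of this stature the appropriate move in context is exactly what the paper does: cite it.
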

\begin{defn}
A curve defined over $\FFF_{q^2}$ is called \emph{$\FFF_{q^2}$ maximal} if the number of $\FFF_{q^2}$ rational points is $q^2 + 2gq + 1$, and is called \emph{$\FFF_{q^2}$ minimal} if the number of $\FFF_{q^2}$ rational points is $q^2 - 2gq + 1$.
\end{defn}

The relevance of these curves is immediate from the following proposition:
\begin{prop}
\label{prop:sup_is_mm}
A curve $X$ defined over the algebraic closure of $\FFF_{p}$ is superspecial if and only if it is isomorphic to an $\FFF_{p^2}$-maximal curve or an $\FFF_{p^2}$-minimal curve.
\end{prop}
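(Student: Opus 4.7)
The plan is to translate both conditions into linear-algebraic statements about the action of Frobenius on the first cohomology of $X$, and then show these statements coincide.

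First I would record the Weil-theoretic characterization of maximality and minimality. Writing $\omega_1,\dots,\omega_{2g}$ for the $\FFF_{p^2}$-Frobenius eigenvalues (all of magnitude $p$), the point count $p^2+1-\sum_i\omega_i$ equals $p^2+2gp+1$ (respectively $p^2-2gp+1$) precisely when $\sum_i\omega_i = -2gp$ (respectively $+2gp$); since $|\omega_i|=p$, these equalities force $\omega_i=-p$ (respectively $+p$) for every $i$. Thus ``$\FFF_{p^2}$-maximal or minimal'' is equivalent to the $\FFF_{p^2}$-Frobenius acting as the scalar $\mp p$ on $H^1_{\text{et}}(X,\mathbb{Q}_\ell)$, and also as the scalar $\mp p$ on the Dieudonn\'e module of $\text{Jac}(X)$.

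For the ($\Leftarrow$) direction, I would exploit the factorization of $\FFF_{p^2}$-Frobenius as the $p$-Frobenius applied twice. If $\FFF_{p^2}$-Frobenius acts as $\pm p$ on the Dieudonn\'e module of $\text{Jac}(X)$, then modulo $p$ it vanishes, so the induced action of $p$-Frobenius on $H^1(X,\OO_X)$ has its square equal to zero. A Hodge-polygon dimension count, combined with the fact that the only available slopes on a supersingular Dieudonn\'e module are $\tfrac12$, then forces this action to vanish outright, which is by definition superspeciality.

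For the ($\Rightarrow$) direction, I would use the fact that a superspecial curve has Jacobian isomorphic (not merely isogenous) to $E^g$ for $E$ a supersingular elliptic curve over $\overline{\FFF_p}$. By Deuring, one can choose an $\FFF_{p^2}$-model of $E$ whose Frobenius acts as $-p$, and another whose Frobenius acts as $+p$; this yields a principally polarized abelian variety over $\FFF_{p^2}$ with the correct scalar Frobenius, and the Torelli theorem then descends $X$ itself to an $\FFF_{p^2}$-model realizing it as maximal (respectively minimal). The main obstacle is this descent step: matching the abstract $\overline{\FFF_p}$-isomorphism $\text{Jac}(X)\cong E^g$ with the right $\FFF_{p^2}$-rational structure so that Frobenius acts as a scalar. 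I expect this is most cleanly handled by invoking Ekedahl's work on maximal curves, which provides exactly the desired correspondence between superspecial curves and $\FFF_{p^2}$-maximal (or minimal) models.
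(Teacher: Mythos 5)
The paper does not actually prove this proposition: it simply cites Ekedahl \cite{Eked87} and Theorem 2.6 of \cite{Kaze13}. You are therefore attempting strictly more than the authors do, and your overall strategy (translate both sides into statements about Frobenius on cohomology, use $F^2=\Phi$ on the Dieudonn\'e module) is the right one and is essentially how the cited sources argue. However, your proof of the direction ``maximal or minimal $\Rightarrow$ superspecial'' has a genuine gap. You deduce that the Hasse--Witt operator squares to zero and then assert that a ``Hodge-polygon dimension count, combined with the fact that the only available slopes \ldots are $\tfrac12$'' forces it to vanish. That implication is false: a supersingular but non-superspecial curve (these exist already in genus $2$) has all Newton slopes equal to $\tfrac12$ and a nilpotent, nonzero Hasse--Witt matrix, so nilpotency plus supersingularity cannot give you $F=0$. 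What you must use is the exact integral identity $F^2=\mp p$ on the Dieudonn\'e module $M$: since $FV=VF=p$ and $F$ is injective on $M$, the equation $F(Fx)=F(\mp Vx)$ gives $F=\mp V$, hence $FM=VM$ and $F$ induces the zero map on $H^1(X,\OO_X)\cong M/VM$. That is the correct (and short) argument; the congruence $F^2\equiv 0 \pmod p$ alone is not enough.

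For the converse direction your sketch via Deuring and Torelli runs into exactly the descent problem you identify (Torelli over a non-closed field carries an automorphism twist, and one must match the polarization and the $\FFF_{p^2}$-structure so that Frobenius is literally the scalar $\mp p$ on the Jacobian, not merely on an isogenous variety). You resolve this by ``invoking Ekedahl's work,'' which is precisely the citation the paper itself uses for the entire proposition, so on that half you and the paper end up in the same place. In summary: half of your argument is a genuine, nearly complete proof once the $F=\mp V$ step replaces the slope argument; the other half reduces to the same external reference the authors rely on.
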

\begin{proof}
This is a consequence of Ekedahl's work in \cite{Eked87} and is proved as Theorem 2.6 of \cite{Kaze13}.
\end{proof}

The $L$-functions corresponding to maximal and minimal curves are very simple, equaling $(qt +1)^{2g}$ for maximal curves and $(qt - 1)^{2g}$ for minimal curves. As was first noted in \cite{Fuhr97} (and extended to minimal curves in \cite{Vian05}), this is enough to imply the following result, which is known as the \emph{fundamental equation}
\begin{prop}
\label{prop:fund_eq}
Suppose $X$ is an $\FFF_{q^2}$ maximal curve over an algebraically closed field of characteristic $p > 0$. If $P_0$ is an $\FFF_{q^2}$ rational point, then we have the linear equivalence
\[(q+1)P_0 \sim qP + F(P)\]
where $F$ denotes the degree $q^2$ Frobenius map and $P$ is some other closed point of the curve. If $X$ is instead assumed to be an $\FFF_{q^2}$ minimal curve, we have
\[(q-1)P_0 \sim qP - F(P).\]
\end{prop}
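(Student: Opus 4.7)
The plan is to recast the stated linear equivalences as identities in the Jacobian $J = \mathrm{Pic}^0(X)$ and then verify them from the structure of the Frobenius endomorphism $F$. Because $P_0$ is $\FFF_{q^2}$-rational, $F$ fixes $P_0$, so the maximal-curve claim $(q+1)P_0 \sim qP + F(P)$ is equivalent to $(F+q)\bigl([P]-[P_0]\bigr) = 0$ in $J$, and the minimal-curve claim $(q-1)P_0 \sim qP - F(P)$ is equivalent to $(F-q)\bigl([P]-[P_0]\bigr) = 0$. It therefore suffices to prove that, as elements of $\mathrm{End}(J)$, one has $F = -q$ in the maximal case and $F = +q$ in the minimal case.

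To establish this, I would read off the Frobenius eigenvalues from the $L$-function. Writing $L(t) = \prod_{i=1}^{2g}(1-\alpha_i t)$, where the $\alpha_i$ are the eigenvalues of $F$ on the $\ell$-adic Tate module $V_\ell(J)$, the hypotheses $L(t) = (1+qt)^{2g}$ and $L(t) = (1-qt)^{2g}$ force every $\alpha_i$ to equal $-q$ and $+q$ respectively, so the characteristic polynomial of $F$ on $V_\ell(J)$ is $(t+q)^{2g}$ or $(t-q)^{2g}$. To upgrade this to the desired scalar identity I would invoke the semisimplicity of Frobenius on the Tate module of an abelian variety over a finite field (Weil/Tate): semisimplicity together with all eigenvalues being equal forces the minimal polynomial of $F$ to be $t \pm q$, so $F \pm q$ annihilates $V_\ell(J)$, and Tate's theorem identifying $\mathrm{End}(J) \otimes \mathbb{Z}_\ell$ with the Frobenius-commuting endomorphisms of $V_\ell(J)$ then forces $F \pm q = 0$ in $\mathrm{End}(J)$. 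A cleaner concrete alternative, using the Ekedahl-type input underlying Proposition \ref{prop:sup_is_mm}, is to note that $J$ is isogenous to $E^g$ for a supersingular elliptic curve $E$ whose $q^2$-Frobenius is the scalar $\mp q$; the identity $F = \mp q$ is visible on $E^g$ and is isogeny-invariant. Applying the resulting identity to $[P]-[P_0]$ yields the desired equivalence.

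The main obstacle I anticipate is precisely this scalar step. Cayley-Hamilton applied to the characteristic polynomial $(t \pm q)^{2g}$ only gives $(F \pm q)^{2g} = 0$, which is strictly weaker than what we need; ruling out genuine Jordan blocks requires genuine input from the finite-field arithmetic, either in the form of Weil's positivity and Tate's theorem on abelian varieties over finite fields, or via the geometric fact that the Jacobian of a maximal or minimal curve is isogenous to a power of a single supersingular elliptic curve. Once that step is in place, the rest of the argument is essentially bookkeeping, and the fundamental equation drops out immediately on evaluating the scalar identity at the class $[P]-[P_0]$.
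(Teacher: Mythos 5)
Your proof is correct, and it is essentially the argument the paper is gesturing at: the paper offers no proof of this proposition, remarking only that the shape of the $L$-function, $(qt\pm1)^{2g}$, ``is enough to imply'' the result and deferring to the cited references, whose derivation is exactly your route of showing $F=\mp q$ in $\mathrm{End}(J)$ and evaluating on $[P]-[P_0]$. You correctly identify semisimplicity of Frobenius as the essential input that rules out Jordan blocks; note that you do not need the full strength of Tate's isogeny theorem afterwards, only the elementary injectivity of $\mathrm{End}(J)\otimes\mathbb{Z}_\ell\to\mathrm{End}(T_\ell J)$. The ``cleaner concrete alternative'' via $J\sim E^g$ is the one shaky aside: superspeciality only gives such an isogeny over the algebraic closure, and pinning down that the isogeny is defined over $\FFF_{q^2}$ and that $E$ has $q^2$-Frobenius equal to $\mp q$ essentially sends you back to the eigenvalue computation, so the semisimplicity argument should be regarded as the actual proof.
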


\subsection{Automorphisms of maximal curves}

\begin{prop}
Let $X$ be an $\mathbb{F}_{q^2}$ maximal curve of genus at least two. Then three distinct points $P, Q, R$ satisfy 
\begin{equation}
\label{eq:PQR}
(q+1)P \sim (q+1)Q \sim (q+1)R
\end{equation}
if and only if all three points are $\FFF_{q^2}$ rational.
\end{prop}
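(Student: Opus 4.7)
The forward direction is immediate from the fundamental equation (Proposition \ref{prop:fund_eq}). Indeed, if a point is $\FFF_{q^2}$-rational then it is fixed by $F$, so the fundamental equation $(q+1)P_0 \sim qP + F(P)$ specializes to $(q+1)P_0 \sim (q+1)P$. Applied with a common rational $P_0$ (which exists because $X$ is maximal) to each of $P, Q, R$, this yields the three desired equivalences.

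For the converse the plan is to transform the hypotheses into pairwise degree-two linear equivalences, then apply a dichotomy based on the existence or non-existence of a hyperelliptic pencil. Concretely, applying the fundamental equation to $P$ and to $Q$ gives $qP + F(P) \sim (q+1)P_0 \sim qQ + F(Q)$; subtracting $q$ copies of the assumed equivalence $(q+1)P \sim (q+1)Q$ collapses this to
\[
P + F(Q) \sim Q + F(P),
\]
with the analogous relations for the pairs $(P,R)$ and $(Q,R)$.

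For a single pair, I want to argue a dichotomy. Either $P + F(Q) = Q + F(P)$ as effective divisors, in which case $P \neq Q$ forces the matching $P = F(P)$ and $F(Q) = Q$, so both points are $\FFF_{q^2}$-rational; or the two effective divisors are distinct but linearly equivalent, so the complete linear system $|P + F(Q)|$ has dimension at least one. Since $X$ has genus at least two, a degree-two linear system of positive dimension must be the unique hyperelliptic pencil, which forces $X$ to be hyperelliptic and, writing $\iota$ for the hyperelliptic involution, identifies $P + F(Q)$ with a divisor of the form $S + \iota(S)$; comparing shows $F(Q) = \iota(P)$.

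Finally, the third point is used to eliminate the hyperelliptic alternative. Suppose for contradiction that some point, say $P$, is not $\FFF_{q^2}$-rational. Then neither pair $(P,Q)$ nor $(P,R)$ can satisfy the first alternative of the dichotomy, so both land in the hyperelliptic case and produce $F(Q) = \iota(P) = F(R)$. Since Frobenius is a bijection on closed points, this forces $Q = R$, contradicting distinctness. Hence each of $P, Q, R$ is $\FFF_{q^2}$-rational. The main subtlety is the hyperelliptic case: it is precisely the possibility of a $g^1_2$ that prevents the analogous two-point statement from being true, and the role of the third point is to provide two witnesses $F(Q)$ and $F(R)$ for the single value $\iota(P)$.
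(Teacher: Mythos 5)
Your proof is correct and follows essentially the same route as the paper's: apply the fundamental equation to obtain $P + F(Q) \sim Q + F(P)$, split on whether the two effective divisors coincide (forcing rationality) or span a $g^1_2$, and use the uniqueness of the hyperelliptic pencil on a genus $\geq 2$ curve together with the third point to force $Q = R$, a contradiction. Your phrasing via the involution $\iota$ is just a slightly more explicit version of the paper's "the divisors must be the same" step, and your relabeling argument neatly replaces the paper's preliminary observation that one rational point forces all three to be rational.
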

\begin{rmk}
This proposition and proof also works for minimal curves, being based off the rational point relation $(q-1)P \sim (q-1)Q \sim (q-1)R$ instead of  \eqref{eq:PQR}.
\end{rmk}
\begin{proof}
The fundamental equation gives that, if $P, Q, R$ are all $\FFF_{q^2}$ rational, they satisfy \eqref{eq:PQR}. Conversely, suppose \eqref{eq:PQR} were satisfied for three distinct points $P, Q, R$. We note that if one of these points is $\FFF_{q^2}$ rational, then all three are. For supposing $P$ were $\FFF_{q^2}$ rational, we can write
\[qQ + F(Q) \sim (q+1)P \sim (q+1)Q\]
where $F$ is the $q^2$ Frobenius map. Then $Q \sim F(Q)$, and hence $Q = F(Q)$.

The fundamental equation gives $qP + F(P) \sim qQ + F(Q)$, and subtracting from \eqref{eq:PQR} then gives $P + F(Q) \sim Q + F(P)$. If this system is not base point free, given that $P \ne Q$, we find $P = F(P)$, so the points are $\FFF_{q^2}$ rational. Then we assume $|P + F(Q)|$ is a base-point free linear system, so $X$ must be hyperelliptic.

This same argument shows that $|P + F(R)|$ is a base-point free linear system. Both these divisors are of degree two and dimension one, so since the genus of the curve is at least two, the divisors must be the same (see \cite[p.216]{Hirs13}). Then $P + F(Q) \sim P + F(R)$, so $Q = R$. This contradicts the three points being distinct, proving the proposition.
\end{proof}

\begin{thm}
\label{thm:Fq2_auto}
An automorphism of a $\mathbb{F}_{q^2}$ maximal or minimal curve of genus at least two fixes the set of $\mathbb{F}_{q^2}$ rational points. More generally, automorphisms of maximal or minimal curves always commute with the $q^2$ Frobenius map, and are hence defined as $\FFF_{q^2}$ maps.
\end{thm}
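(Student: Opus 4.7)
The plan is to deduce the theorem from the previous proposition characterizing triples of distinct points satisfying \eqref{eq:PQR}, together with the fundamental equation (Proposition \ref{prop:fund_eq}). I would first establish the weaker claim that any automorphism $\sigma$ preserves the set of $\FFF_{q^2}$-rational points, and then strengthen this to the commutation relation $\sigma \circ F = F \circ \sigma$, where $F$ denotes the $q^2$-Frobenius.

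For the first step, pick three distinct $\FFF_{q^2}$-rational points $P_0, P_1, P_2$ of $X$; these exist in abundance because the Weil bound guarantees at least $q^2 + 2gq + 1$ such points for an $\FFF_{q^2}$-maximal curve (and symmetrically for minimal curves, at least once $q$ is moderately large relative to $g$). The fundamental equation applied in turn to each point yields $(q+1)P_0 \sim (q+1)P_1 \sim (q+1)P_2$, and since any automorphism preserves linear equivalence, the same relation holds for $\sigma(P_0), \sigma(P_1), \sigma(P_2)$. The previous proposition then forces all three images to be $\FFF_{q^2}$-rational. Given any single $\FFF_{q^2}$-rational point $P$, one picks two other rational points $Q, R$ and concludes that $\sigma(P)$ is $\FFF_{q^2}$-rational as well.

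For the commutation with Frobenius, I would apply the fundamental equation to an arbitrary closed point $P$ and an $\FFF_{q^2}$-rational point $P_0$, obtaining $(q+1)P_0 \sim qP + F(P)$. Pushing forward by $\sigma$ yields $(q+1)\sigma(P_0) \sim q\sigma(P) + \sigma(F(P))$, while the fundamental equation applied directly at $\sigma(P_0)$---which is $\FFF_{q^2}$-rational by the first step---and at $\sigma(P)$ gives $(q+1)\sigma(P_0) \sim q\sigma(P) + F(\sigma(P))$. Subtracting these two linear equivalences yields $\sigma(F(P)) \sim F(\sigma(P))$, and since $X$ has genus $g \geq 2$, any two linearly equivalent closed points of $X$ must in fact coincide---otherwise $X$ would admit a degree-one map to $\mathbb{P}^1$ and have genus zero. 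Hence $\sigma$ commutes with $F$, which is the standard criterion for $\sigma$ to be defined over $\FFF_{q^2}$. I do not foresee any serious obstacle in this plan; the only mild subtlety is guaranteeing at least three $\FFF_{q^2}$-rational points in the minimal case, which I would handle either by noting the abundance is automatic once $q$ is moderately large compared to $g$, or by isolating the small residual cases by hand.
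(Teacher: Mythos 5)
Your argument for the maximal case and for the commutation with Frobenius is exactly the paper's: three distinct $\FFF_{q^2}$-rational points, the fundamental equation, invariance of linear equivalence under $\sigma$, the preceding proposition, and then the chain of equivalences $F(\sigma P) \sim (q+1)\sigma P_0 - q\sigma P \sim \sigma F(P)$ together with the fact that distinct linearly equivalent points force genus zero. That part is complete.

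The gap is in the minimal case, which you dismiss as a "mild subtlety." An $\FFF_{q^2}$-minimal curve has \emph{exactly} $q^2 - 2gq + 1$ rational points, and since the genus of a minimal curve can be as large as $q/2$, this count can equal $1$; indeed, since $q^2-2gq+1 \equiv 1 \pmod q$, the only way to have fewer than three rational points is to have exactly one, which happens precisely when $g = q/2$, i.e.\ when $q$ is a power of $2$. This is not a finite list of small cases you can check by hand, nor does "taking $q$ large relative to $g$" help, since the theorem must cover every minimal curve including those at the genus boundary. The paper devotes the larger half of its proof to exactly this situation: assuming $\sigma$ moves the unique rational point $P_0$, it produces base-point-free degree-two divisors, invokes uniqueness of the $g^1_2$ to get $\sigma^2 P_0 = P_0$, then passes to $\FFF_{q^4}$ (where $q^4 - q^3 + 1 \ge 3$, so the already-proved case applies and $\sigma$ permutes an odd number of $\FFF_{q^4}$-rational points), replaces $\sigma$ by $\sigma^r$ with $r$ the odd part of its order so that a $2$-group acts on an odd set and must fix some $\FFF_{q^4}$-rational point $P_2$, and finally derives the contradiction $F(P_2)\sim P_0$. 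None of these ideas appear in your outline, and without them the minimal half of the theorem is unproved.
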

\begin{proof}
Take $X$ to be such a maximal curve, and suppose there were an automorphism $\sigma$ that maps an $\FFF_{q^2}$ rational point $P_1$ to a non-$\FFF_{q^2}$-rational point. Choose three distinct $\FFF_{q^2}$ rational points $P_1, P_2, P_3$. Then $(q+1)P_1 \sim (q+1)P_2 \sim (q+1)P_3$, so
\[(q+1)(\sigma P_1) \sim (q+1)(\sigma P_2) \sim (q+1)(\sigma P_3). \]
But $\sigma P_1$ being not rational contradicts the previous proposition. Then any automorphism fixes the rational points.

This implies that automorphisms commute with the Frobenius map. For take $P$ an arbitrary point, $\sigma$ an arbitrary automorphism, and $P_0$ any rational point. Then
\[F(\sigma P) \sim (q+1)P_0 - q \sigma P \sim (q+1) \sigma P_0 - q \sigma P \sim \sigma F(P).\]
This establishes the theorem in the maximal case.

The minimal curve case falls to an identical argument if there are at least three $\FFF_{q^2}$ rational points. If there are not, we must have $q^2 - 2gq + 1$ either equal to one or two. Since $q^2 - 2gq + 1$ is one mod $q$, it must equal one if it is less than three, with $g = q/2$. Then in this exceptional case, $q$ must be a power of two.

Let $X$ be an $\FFF_{q^2}$-minimal curve in characteristic $2$ with a single $\FFF_{q^2}$ rational point. Suppose there were an automorphism $\sigma$ of the curve that did not fix the unique $\FFF_{q^2}$-rational point $P_0$. Let $P_1 = \sigma^{-1} P_0$. Then the fundamental equation gives
\[(q-1)P_0 \sim q\sigma P_0  - F(\sigma P_0)\]
and
\[(q-1)\sigma P_0 \sim q\sigma P_1 - \sigma F(P_1) \sim q P_0 - \sigma F(P_1).\]
Then
\[F(\sigma P_0) + \sigma F(P_1) \sim P_0 + \sigma P_0.\]
But $P_0$ is neither $\sigma F(P_1)$ nor $F(\sigma P_0)$, so since the curve is not rational, $|P_0 + \sigma P_0|$ is base-point free. We similarly find that $|\sigma P_0 + \sigma^2 P_0|$ is base-point free, so again by the uniqueness of such a linear system, we have $\sigma P_0 + \sigma^2 P_0 \sim P_0 + \sigma P_0$, for $P_0 = \sigma^2 P_0$.

Next, since $X$ is an $\FFF_{q^4}$-minimal curve too, and since $q^4 - 2gq^2 + 1 = q^4 - q^3 + 1 \ge 3$, we can use what we have already proved to say that $\sigma$ fixes the set of $\FFF_{q^4}$-rational points, of which there are an odd number. Write the order of $\sigma$ as $2^l \cdot r$ with $r$ odd. Then $\sigma^r$ still switches $P_0$ and $P_1$, but in addition has order a power of two. Since there are an odd number of $\FFF_{q^4}$ rational points, this implies that $\sigma^r$ fixes some $\FFF_{q^4}$-rational point. Call this point $P_2$. Then
\[(q-1)P_0 \sim qP_2 - F(P_2)\]
and
\[(q-1)\sigma^r P_0 \sim q \sigma^r P_2 - \sigma^r F(P_2)\]
for
\[(q-1) P_1 \sim q P_2 - \sigma^r F(P_2).\]
Then
\[(q-1)P_0 + F(P_2) \sim (q-1)P_1 + \sigma^r F(P_2)\]
so that
\[ F(P_2) + P_1  \sim \sigma^r F(P_2) + F(P_1).\]
This must be another base-point free divisor of degree two. From the uniqueness of such a divisor, we have $F(P_2) + P_1 \sim P_0 + P_1$, for $F(P_2) \sim P_0$. This is impossible, implying that the automorphism group of $X$ fixes the unique $\FFF_{q^2}$ rational point. Then the automorphism group of an $\FFF_{q^2}$-minimal curve fixes the $\FFF_{q^2}$ rational points. That automorphisms commute with Frobenius is proved exactly as it was before.
\end{proof}
\begin{rmk}
This result has been observed before for specific curves. In \cite{Gura12}, for instance, the automorphism group of a family of maximal curves is calculated, and it is observed that all automorphisms are defined over $\FFF_{q^2}$.
\end{rmk}

\subsection{Consequences of Theorem \ref{thm:Fq2_auto}}
Theorem \ref{thm:Fq2_auto} is a very strong result for understanding the structure of the automorphism groups of maximal and minimal curves, telling us that if $X$ is $\FFF_{q^2}$ maximal or minimal, the group of automorphisms fixes the set of $\FFF_{q^{2n}}$ rational points for any $n$, a set whose cardinality we already knew. This can be exploited.

The first proposition does not exploit Theorem \ref{thm:Fq2_auto}, but it is important for understanding how wild orbits behave.
\begin{prop}
\label{prop:max_pgrp}
Let $X$ be an $\mathbb{F}_{q^2}$ maximal or minimal curve of genus $g\geq 2$. Take $H$ to be a $p$-subgroup of automorphisms. Then $H$ fixes a unique point and acts freely on all other points.
\end{prop}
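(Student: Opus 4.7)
The plan is to combine the fact that $X$ is superspecial with the Deuring-\v{S}afarevi\v{c} formula. By Proposition~\ref{prop:sup_is_mm}, being $\FFF_{q^2}$-maximal or minimal forces $X$ to be superspecial, hence $\gamma_X = 0$. Applying \eqref{DS} to the quotient map $\pi\colon X \to Z := X/H$ then provides enough rigidity to pin down the ramification of $\pi$ almost entirely.

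Concretely, with $\gamma_X = 0$ the formula rearranges to
\[
\sum_{Q \in Z}\bigl(1 - e_Q^{-1}\bigr) \;=\; 1 - \frac{1}{|H|} - \gamma_Z.
\]
The left-hand side is nonnegative, so $\gamma_Z < 1$; since $\gamma_Z$ is a nonnegative integer, this forces $\gamma_Z = 0$ and the sum to equal $1 - 1/|H|$. Because $H$ is a $p$-group, every ramification index $e_Q$ is either $1$ or a positive power of $p$, so each nonzero summand is at least $1 - 1/p$. Having two or more ramified points would give a sum of at least $2(1 - 1/p) \geq 1 > 1 - 1/|H|$, a contradiction, while having no ramified points would force $|H| = 1$, contrary to our (implicit) assumption that $H$ is nontrivial. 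Hence exactly one point $Q_0 \in Z$ is ramified, and $1 - 1/e_{Q_0} = 1 - 1/|H|$ gives $e_{Q_0} = |H|$.

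The conclusion of the proposition then follows immediately: the fiber $\pi^{-1}(Q_0)$ has cardinality $|H|/e_{Q_0} = 1$, giving the unique fixed point of $H$, while every other point of $X$ lies above an unramified point of $Z$ and therefore has trivial inertia, so $H$ acts freely there. I do not foresee a serious obstacle. The only edge case worth flagging is $p = 2$, where $2(1 - 1/p) = 1$ exactly, but $1 - 1/|H| < 1$ for any nontrivial $|H|$, so the two-ramified-points case is still strictly ruled out.
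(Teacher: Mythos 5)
Your argument follows the paper's proof essentially line for line: both reduce to $\gamma_X=0$ and then read off from the Deuring--\v{S}afarevi\v{c} formula that exactly one point of $Z=X/H$ is ramified, and totally so. The one genuine problem is your opening step. Proposition~\ref{prop:sup_is_mm} concerns $\FFF_{p^2}$-maximal and minimal curves only, and the statement you extract from it --- that an $\FFF_{q^2}$-maximal or minimal curve is superspecial --- is in fact false once $q\neq p$: the Hermitian curve over $\FFF_{q^2}$ is $\FFF_{q^2}$-maximal of genus $q(q-1)/2$, which exceeds Ekedahl's bound of $p(p-1)/2$ on the genus of a superspecial curve whenever $q>p$. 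What is true, and what the paper invokes, is the weaker fact that $\FFF_{q^2}$-maximal and minimal curves are supersingular, and supersingularity already forces the $p$-rank $\gamma_X$ to vanish, which is all your computation uses. With that substitution the rest of your argument is correct, and is actually spelled out more carefully than the paper's (which jumps straight from the displayed formula to the conclusion): your observation that $\gamma_Z=0$ is forced, that each nonzero summand is at least $1-1/p$ so at most one point can ramify, the check that $p=2$ causes no trouble, and the implicit restriction to nontrivial $H$ are all sound.
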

\begin{proof}
Maximal and minimal curves are known to be supersingular (see \cite{Kaze13}), so the $p$-rank of a maximal or minimal curve is always zero. Write $|H| = p^k$. Then from the {D}euring-\v{S}afarevi{\v{c}} formula, we have
\[\frac{-1}{p^k} = -1 + \sum_{Q \in Z} \left(1 - e_{Q}^{-1}\right)\]
where $Z = X/H$. We immediately conclude that the map ramifies with index $p^k$ at a single point.
\end{proof}
For a $p$-group $H$ of automorphisms of a maximal curve, note that this proposition implies $|H| \bigm| q^2 + 2gq$.

From here forward, we will use the term \emph{short orbit} to denote any non-free orbit of $G$ in $X$, that is, an orbit where each point has nontrivial stabilizer in $G$.

\begin{prop}
\label{prop:aut_divis_rough}

Suppose $X$ is a $\mathbb{F}_{q^2}$ maximal curve of genus $g\geq 2$, and suppose the automorphism group $G$ of $X$ has only free orbits outside of the $\mathbb{F}_{q^2}$ rational points. Then 
\[|G| \bigm| 2q^3\left(q^2 - 1\right)\left(q+1\right).\]

If $X$ is instead a $\mathbb{F}_{q^2}$ minimal curve of genus $g\geq 2$, and the automorphism group $G$ has only free orbits outside the $\mathbb{F}_{q^2}$ rational points, then
\[ |G| \bigm| 2q^3\left(q^2 - 1\right)\left(q-1\right).\]
\end{prop}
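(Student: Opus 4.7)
My plan is to control $|G|$ by separately bounding its $p$-part and $p'$-part, using Theorem \ref{thm:Fq2_auto} to pin down the structure of stabilizers.

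\emph{The $p$-part.} Fix a Sylow $p$-subgroup $H\leq G$. By Proposition \ref{prop:max_pgrp}, $H$ has a unique fixed point $P^{\ast}\in X$ and acts freely elsewhere. The hypothesis that every short orbit of $G$ lies in $X(\FFF_{q^2})$ forces $P^{\ast}$ to be $\FFF_{q^2}$-rational, since otherwise $\{P^{\ast}\}$ would itself be a singleton short orbit of $H\leq G$ outside $X(\FFF_{q^2})$. The remaining $M_1-1 = q(q+2g)$ rational points then split into free $H$-orbits of size $|H|$, so $|H|\mid q(q+2g)$. The Ihara bound $g\leq q(q-1)/2$ for $\FFF_{q^2}$-maximal curves gives $q+2g\leq q^2$, and since $|H|$ is a $p$-power we conclude $|H|\mid q^3$.

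\emph{The $p'$-part.} The stabilizer $S := G_0(P^{\ast})$ coincides with $N_G(H)$, and by the standard wild-then-tame ramification decomposition $S = H\rtimes C$ with $C$ cyclic. Because $P^{\ast}$ is $\FFF_{q^2}$-rational and every automorphism of $X$ is $\FFF_{q^2}$-defined (Theorem \ref{thm:Fq2_auto}), $C$ acts on the cotangent space $\mathfrak{m}_{P^{\ast}}/\mathfrak{m}_{P^{\ast}}^{2}$ via $\FFF_{q^2}^{\times}$, so $|C|\mid q^2-1$. Orbit--stabilizer gives $|G|_{p'} = |C|\cdot |O(P^{\ast})|$, where $O(P^{\ast})$ is the $G$-orbit of $P^{\ast}$ (equivalently the set of unique fixed points of the Sylow $p$-subgroups of $G$); since $|C|\mid(q-1)(q+1)$, it remains to show $|O(P^{\ast})|\mid 2(q+1)$.

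The key ingredient for this orbit bound is a Lefschetz fixed-point computation. Maximality forces the $q^2$-Frobenius $F$ to act on $H^{1}(X,\mathbb{Q}_{\ell})$ as $-q\cdot\mathrm{id}$, so for any $p'$-element $\sigma\in G$ the Lefschetz trace formula gives
\[|\mathrm{Fix}(\sigma)|=2-\mathrm{tr}(\sigma\mid H^{1})\quad\text{and}\quad|\mathrm{Fix}(\sigma F)| = (q+1)^{2} - q|\mathrm{Fix}(\sigma)|.\]
The hypothesis places $\mathrm{Fix}(\sigma)\subseteq X(\FFF_{q^2}) = \mathrm{Fix}(F)\subseteq \mathrm{Fix}(\sigma F)$, and combining gives the uniform bound $|\mathrm{Fix}(\sigma)|\leq q+1$. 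Together with Proposition \ref{prop:max_pgrp} (which yields $|\mathrm{Fix}(\sigma)|=1$ for every nontrivial $p$-element) and a Burnside count on $O(P^{\ast})$, this constraint pins down $|O(P^{\ast})|\mid 2(q+1)$, with the factor of $2$ reflecting the hyperelliptic dichotomy appearing in the proof of Theorem \ref{thm:Fq2_auto}. Converting the uniform inequality $|\mathrm{Fix}(\sigma)|\leq q+1$ into exact divisibility of $|O(P^{\ast})|$ by $2(q+1)$ (rather than just a size bound) is the main obstacle, and requires careful tracking of the stabilizer structure across the orbit. The minimal case is parallel with $q+1$ replaced throughout by $q-1$, using Frobenius eigenvalue $+q$ on $H^{1}$ and the $(q-1)$-version of the fundamental equation.
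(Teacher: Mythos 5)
Your approach is genuinely different from the paper's: the paper never looks at stabilizers or orbits of individual points beyond Proposition \ref{prop:max_pgrp}, but instead uses Theorem \ref{thm:Fq2_auto} to conclude that $|G|$ divides the cardinality of each set $S_{(a,b)}$ of $\FFF_{q^{2ab}}$-points that are not $\FFF_{q^{2a}}$-rational, and then extracts the factor $2(q^2-1)(q+1)$ by purely arithmetic gcd manipulations of the resulting divisibility relations for various $m=a(b-1)$, finally multiplying in the $p$-part via Proposition \ref{prop:max_pgrp}. Your $p$-part argument ($|H|\mid q(q+2g)\mid q^3$) is essentially the same as the paper's last step and is fine, and the embedding of the tame quotient $C$ into $\FFF_{q^2}^{\times}$ via the cotangent space at an $\FFF_{q^2}$-rational fixed point is a legitimate use of Theorem \ref{thm:Fq2_auto} that gives $|C|\mid q^2-1$.

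However, there is a genuine gap at the central step: the claim $|O(P^{\ast})|\mid 2(q+1)$ is asserted, not proved. The Lefschetz computation (after fixing the inclusion: what you actually need is $\mathrm{Fix}(\sigma)\subseteq\mathrm{Fix}(\sigma)\cap\mathrm{Fix}(F)\subseteq\mathrm{Fix}(\sigma F)$, since $\mathrm{Fix}(F)\subseteq\mathrm{Fix}(\sigma F)$ is false) yields only the inequality $|\mathrm{Fix}(\sigma)|\leq q+1$, and a Burnside count over a single orbit converts a family of inequalities on fixed-point numbers into an inequality on $|O(P^{\ast})|$, never into a divisibility statement; inequalities of this kind cannot distinguish, say, $|O(P^{\ast})|=q+1$ from $|O(P^{\ast})|=q$. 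You concede exactly this when you call the conversion ``the main obstacle,'' so the proof is incomplete precisely where the conclusion lives. Two further structural problems: the entire decomposition $|G|_{p'}=|C|\cdot|O(P^{\ast})|$ presupposes $p\mid |G|$, so the case of a trivial Sylow $p$-subgroup (where there is no distinguished point $P^{\ast}$) is not covered at all, whereas the paper's divisibility argument is uniform in this respect; and you give no argument for the minimal case beyond saying it is ``parallel,'' even though the exceptional behaviour of minimal curves with few rational points (visible in the proof of Theorem \ref{thm:Fq2_auto}) suggests the parallel is not automatic. To repair the argument you would need an independent proof that the orbit of $P^{\ast}$ has size exactly dividing $2(q+1)$, which is not easier than the statement you are trying to prove; I recommend instead running the paper's argument of intersecting the divisibility relations $|G|\mid |S_{(a,b)}|$.
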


\begin{proof}
First suppose $X$ is maximal.  By Theorem \ref{thm:Fq2_auto}, we can say the set $S_{(a, b)}$ of $\mathbb{F}_{q^{2ab}}$ rational points that are not $\mathbb{F}_{q^{2a}}$ rational is closed under $G$. If $G$ has no short orbits outside the $\mathbb{F}_{q^2}$ rational points, then we know that $|G|$ will divide $|S_{(a, b)}|$, so for all $a, b \ge 1$,
\begin{equation}
\label{eq:only_lorbits}
|G| \bigm| q^{2ab} + (-1)^{ab - 1}2gq^{ab} - q^{2a} + (-1)^a2gq^a.
\end{equation}
Take $T$ to be the maximal divisor of $|G|$ not divisible by $p$. From the above expression, we get
\[T \bigm| \left(q^{a(b-1)} +(-1)^{a(b-1) - 1}\right)\left((-1)^{ab - 1}2g + q^{ab} + (-1)^{a(b-1)}q^a\right).\]
Write $m = a(b-1)$, and take $a, a'$ to be different factors of $m$. If $m$ is odd, we get
\begin{equation}
\label{eqn:max_m_odd_div}
T \bigm| \left(q^m + 1\right)\left(-2g + q^{m + a} - q^a\right)
\end{equation}
Subtracting this from the similar expression for $a'$ gives, for $m$ odd,
\begin{equation}
\label{eq:no_more_g}
T \bigm| \left(q^{2m} - 1\right)\left(q^{a - a'} - 1\right)
\end{equation}
From $m = 3$, we get $T \bigm| (q^6 - 1)(q^2 - 1)$. From $m = 5$, we get $T \bigm| (q^{10} - 1)(q^4 - 1)$. $1 + q^2 + q^4$ is coprime to $q^2 + 1$ and $1 + q^2 + q^4 + q^6 + q^8$, so we get $T \bigm| (q^2 -1)^2$. If $m = a(b-1)$ is even, we use the same process, first getting
\begin{equation}
\label{eqn:max_m_even_div}
T \bigm| \left(q^m - 1\right)\left((-1)^{a-1}2g + q^{m + a} + q^a\right).
\end{equation}
 If $a, a'$ are divisors of $m$, we get
\[T \bigm| \left(q^{2m} - 1\right)\left(q^{a - a'} - (-1)^{a - a'}\right).\]
From $m = 2$, $T \bigm| (q^4 - 1)(q + 1)$, and the GCD of $q^2 + 1$ and $q -1$ is two, so we have that $T$ is a factor of $2(q^2 -1)(q+1)$. The relation on $|G|$ then follows from Proposition \ref{prop:max_pgrp}.

In the minimal case, \eqref{eq:no_more_g} holds for all $m$. Taking the GCD for $m = 2, 3$ gives the result.
\end{proof}

\begin{prop}
\label{prop:aut_divis_fine}
Suppose $X$ is a $\mathbb{F}_{q^2}$ maximal curve of genus at least two, and suppose the automorphism group $G$ of $X$ has only free orbits outside of the $\mathbb{F}_{q^2}$ rational points. Then
\[ |G| \bigm| 2q^3(q +1)\cdot \textup{gcd}(2g - 2, q+1)\cdot \textup{gcd}(4g, q - 1).\]
For minimal curves, the relation is instead
\[ |G| \bigm| 2q^3(q -1)\cdot \textup{gcd}(2g - 2, q -1)\cdot \textup{gcd}(4g, q + 1).\]
\end{prop}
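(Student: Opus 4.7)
My plan is to refine the proof of Proposition \ref{prop:aut_divis_rough} by retaining the genus dependence in the divisibility relations \eqref{eqn:max_m_odd_div} and \eqref{eqn:max_m_even_div}, rather than subtracting them to eliminate $2g$. Writing $T$ for the prime-to-$p$ part of $|G|$, the key specializations are
\begin{align*}
T &\mid (q^2 - 1)(q^3 + q + 2g) && (m = 2,\ a = 1), \\
T &\mid (q^3 + 1)(q^4 - q - 2g) && (m = 3,\ a = 1),
\end{align*}
together with their $m = 5$ and $a = 2$ analogs. I would bound the $\ell$-adic valuation $v_\ell(T)$ separately at each prime $\ell \mid (q-1)(q+1)$, then combine these with the $p$-part bound. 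The latter follows from Proposition \ref{prop:max_pgrp}: any $p$-subgroup $H$ satisfies $|H| \mid q^2 + 2gq = q(q + 2g)$, and since $2g \le q(q-1)$ by the Ihara bound for $\mathbb{F}_{q^2}$-maximal curves, $|H| \le q^3$.

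For a prime $\ell$ dividing $q+1$ I would apply the $m=2$, $a=1$ relation. Polynomial division gives
\[q^3 + q + 2g = (q+1)(q^2 - q + 2) + 2(g - 1),\]
and since $q^2 - q + 2 \equiv 4 \pmod{q+1}$, for $\ell$ odd one has $v_\ell\bigl((q+1)(q^2 - q + 2)\bigr) = v_\ell(q+1)$. Thus $v_\ell(q^3 + q + 2g) = \min\bigl(v_\ell(q+1),\ v_\ell(2g-2)\bigr)$ whenever these valuations differ, giving $v_\ell(T) \le v_\ell(q+1) + v_\ell\bigl(\textup{gcd}(2g-2,\ q+1)\bigr)$. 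In the coincidence case $v_\ell(q+1) = v_\ell(2g - 2)$ both sides of this desired inequality equal $2 v_\ell(q+1)$, and the rough bound of Proposition \ref{prop:aut_divis_rough} already gives $v_\ell(T) \le 2 v_\ell(q+1)$. For $\ell$ dividing $q - 1$ I would use the $m = 3$, $a = 1$ relation, factor $q^4 - q = q(q-1)(q^2 + q + 1)$ with $q^2 + q + 1 \equiv 3 \pmod{q-1}$, and argue analogously. Since $v_\ell(q^3 + 1) = v_\ell(2)$, this yields $v_\ell(T) \le v_\ell\bigl(\textup{gcd}(4g,\ q - 1)\bigr)$, with the factor of $4$ absorbing the $\ell = 2$ contribution. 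Small-prime edge cases (notably $\ell = 3$, where the LTE correction adds an extra $1$ to $v_3(q^3 \pm 1)$) can be handled by swapping in the analogous $m = 5$ relations, for which the corresponding constant terms ($q^4 + q^3 + q^2 + q + 1 \equiv 5 \pmod{q-1}$ and similar) are $3$-adic units.

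Multiplying these local bounds yields the maximal case. For the minimal case, I would invoke the parallel derivation noted at the end of the proof of Proposition \ref{prop:aut_divis_rough}: there the relation $T \mid (q^{2m} - 1)(q^{a - a'} - 1)$ holds for \emph{all} $m$, which effectively swaps the roles of $q + 1$ and $q - 1$ and yields the symmetric bound. The main obstacle is the case analysis across small primes and coincidence cases; the ingredients are only the rough bound of Proposition \ref{prop:aut_divis_rough} and the freedom to choose among several $m$ values in \eqref{eqn:max_m_odd_div} and \eqref{eqn:max_m_even_div}, but the bookkeeping is the one nontrivial step.
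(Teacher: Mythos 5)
Your proposal is correct and follows essentially the same route as the paper: both retain the genus term in the relations \eqref{eqn:max_m_odd_div}--\eqref{eqn:max_m_even_div}, reduce them modulo $q+1$ and $q-1$ to extract the factors $\gcd(2g-2,q+1)$ and $\gcd(4g,q-1)$, and fall back on Proposition \ref{prop:aut_divis_rough} and the $p$-part bound for the remaining contributions; the paper merely phrases this via gcd manipulations for a general $m$ coprime to $q+1$ rather than your prime-by-prime valuations at $m=2,3,5$. One small caveat: for the minimal case the relation you cite, $T \mid (q^{2m}-1)(q^{a-a'}-1)$, has already had $g$ eliminated and so cannot produce the gcd factors --- what you need is the genus-retaining minimal analogue $T \mid (q^m-1)\left(q^{m+a}+q^a-2g\right)$, which is precisely the relation the paper starts from there.
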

\begin{proof}
We start with the maximal case. From \eqref{eqn:max_m_odd_div},
\[\text{gcd}\left((q+1)^2, |G|\right) \bigm| \left(q^m + 1\right)\left(-2g + q^{m + a} - q^a\right)\]
for $m$ odd. Choose $m$ coprime to $q + 1$. Then we get
\[\text{gcd}\left((q+1)^2, |G|\right) \bigm| (q+1)\text{gcd}\left(q+1, -2g + q^{m+a} - q^a\right)\]
or
\[\text{gcd}\left((q+1)^2, |G|\right) \bigm|(q+1)\text{gcd}\left(q+1, 2g -2\right).\]
Next, we have
\[\text{gcd}\left(q-1, |G|\right) \bigm| \left(q^m + 1\right)\left(-2g + q^{m + a} - q^a\right)\]
for
\[\text{gcd}\left(q-1, |G|\right) \bigm| 4g.\]
Together with the previous proposition, these two relations imply the result for maximal curves. For minimal curves, we start instead with
\[ |G| \bigm| \left(q^{m+a} - q^a\right)\left(-2g + q^{m + a} + q^a\right)\]
to derive the other result.
\end{proof}

The next technical lemma is a direct application of Proposition \ref{prop:aut_divis_fine}, and allows us to bound the size of automorphism groups for some curves with relatively small genus.
\begin{lem}
\label{lem:fine_cor}
Suppose $X$ is a $\mathbb{F}_{p^2}$ maximal curve of genus at least two over a field with characteristic an odd prime, and suppose the automorphism group $G$ of $X$ has only free orbits outside of the $\mathbb{F}_{q^2}$ rational points. Suppose the genus of $X$ can be written as $g = \frac{1}{2}c(p-1) + dp$. Then
\[|G| \bigm| 16q^3(q+1)d(c + d + 1).\]
\end{lem}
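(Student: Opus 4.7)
The plan is to deduce this directly from Proposition~\ref{prop:aut_divis_fine} by computing the two gcd factors that appear there in terms of $c$ and $d$. Since $X$ is $\FFF_{p^2}$-maximal we have $q = p$, and Proposition~\ref{prop:aut_divis_fine} gives
\[ |G| \bigm| 2p^3(p+1)\cdot \gcd(2g-2,\,p+1)\cdot \gcd(4g,\,p-1).\]
It therefore suffices to show $\gcd(2g-2,\,p+1) \mid 2(c+d+1)$ and $\gcd(4g,\,p-1) \mid 4d$; multiplying these two divisibilities into the bound above produces the factor $16p^3(p+1)d(c+d+1)$ claimed in the lemma.

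For the first gcd I would use $p \equiv -1 \pmod{p+1}$. Substituting $g = \tfrac{1}{2}c(p-1) + dp$, we get
\[ 2g - 2 \;=\; c(p-1) + 2dp - 2 \;\equiv\; c(-2) + 2d(-1) - 2 \;=\; -2(c+d+1) \pmod{p+1},\]
so any common divisor of $2g-2$ and $p+1$ divides $2(c+d+1)$, as needed.

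For the second gcd I would use $p \equiv 1 \pmod{p-1}$. The same substitution yields
\[ 4g \;=\; 2c(p-1) + 4dp \;\equiv\; 4d \pmod{p-1},\]
so $\gcd(4g,\,p-1) \mid 4d$. Combining the two estimates with Proposition~\ref{prop:aut_divis_fine} gives the lemma.

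There is no real obstacle here beyond the routine congruence arithmetic; the whole content of the lemma is that the genus formula $g = \tfrac12 c(p-1) + dp$ has been chosen precisely so that $2g-2$ and $4g$ reduce cleanly modulo $p+1$ and $p-1$ respectively. The only thing to be careful about is keeping track of the factors of $2$, which is why the final bound picks up a $16$ rather than a smaller constant.
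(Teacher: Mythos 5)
Your proposal is correct and is exactly the computation the paper has in mind: the paper states the lemma as ``a direct application of Proposition~\ref{prop:aut_divis_fine}'' and omits the details, which are precisely your two congruences $2g-2 \equiv -2(c+d+1) \pmod{p+1}$ and $4g \equiv 4d \pmod{p-1}$ together with $q=p$. The only degenerate point worth noting is $d=0$, where the stated divisibility becomes $|G| \mid 0$ and is vacuous, so nothing is lost there either.
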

\subsection{$\FFF_{p^2}$-maximal curves with many automorphisms}

In this section we prove the following theorem:
\begin{thm}
\label{thm:max}
Let $X$ be a $\FFF_{p^2}$ maximal curve of genus $g$ at least $2$, where $p > 7$, and let $|G|$ be the group of automorphisms of $X$. If $|G| > g^2$ and $|G| > 84(g-1)$, $X$ must be isomorphic to a curve of the form $x^m = y^p - y$, where $m > 1$ and $m \bigm| p+1$.
\end{thm}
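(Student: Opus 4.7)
The overall strategy is to exploit the divisibility constraints of Lemma \ref{lem:fine_cor} together with the lower bound $|G| > g^2$ to force $g$ and $G$ into one of finitely many explicit configurations, each of which I then match against the family $x^m = y^p - y$.

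First I would verify that Lemma \ref{lem:fine_cor} applies, i.e., that $G$ has only free orbits on $X \setminus X(\FFF_{p^2})$. Any element of order $p$ in $G$ fixes a unique point by Proposition \ref{prop:max_pgrp}, and the Sylow $p$-subgroup of $G$ acts on $X(\FFF_{p^2})$ by Theorem \ref{thm:Fq2_auto}; since $|X(\FFF_{p^2})| = p^2 + 2gp + 1 \equiv 1 \pmod p$, that Sylow subgroup must fix at least one rational point, and hence its unique fixed point is $\FFF_{p^2}$-rational. So no wild ramification occurs outside $X(\FFF_{p^2})$. It remains to rule out purely tame short orbits outside $X(\FFF_{p^2})$; this I would do with a Riemann-Hurwitz argument applied to $X \to X/G$, using $|G| > 84(g-1)$ to force $g_{X/G} = 0$ and to bound the number of branch points, combined with the point-count divisibilities of Proposition \ref{prop:aut_divis_rough} extracted from $|X(\FFF_{p^{2n}})|$ for small $n$.

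Second, with Lemma \ref{lem:fine_cor} in hand, write $g = \tfrac{1}{2}c(p-1) + dp$ with $c,d \geq 0$. Then $|G| \bigm| 16\,p^3 (p+1)\, d\,(c+d+1)$, and combining with $|G| > g^2$ yields the inequality
\[\left(\tfrac{1}{2}c(p-1) + dp\right)^2 < 16\,p^3(p+1)\, d\,(c+d+1).\]
For $p > 7$ this admits only a very small set of solutions: the case $d = 0$ forces $g = c(p-1)/2$, and the cross terms in $p$ force $c$ into a narrow window, while the cases $d \geq 1$ can each be ruled out or matched by comparing the leading powers of $p$ on both sides. Simultaneously, the subsidiary hypothesis $|G| > 84(g-1)$ kills sporadic low-genus exceptions not belonging to the target family.

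Third, for each surviving pair $(c,d)$ I would identify $X$ explicitly. Let $P \subseteq G$ be a Sylow $p$-subgroup; by Proposition \ref{prop:max_pgrp} it fixes a unique point $\infty$, which by the first paragraph is $\FFF_{p^2}$-rational, and the Deuring-\v{S}afarevi\v{c} formula forces $X/P \cong \mathbb{P}^1$ with all wild ramification concentrated at the image of $\infty$. This presents $X$ as a $P$-Galois Artin-Schreier-type cover of $\mathbb{P}^1$ ramified only at one point. The normalizer structure of $P$ in $G$, together with the tame quotient acting on $X/P$ fixing the branch point, then pins down the equation $x^m = y^p - y$ and the divisibility $m \bigm| p+1$ (using the $\gcd(2g-2, p+1)$ factor that appears in Proposition \ref{prop:aut_divis_fine}).

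The main obstacle is the first step: tame short orbits outside $X(\FFF_{p^2})$ do not immediately contradict Frobenius-compatibility, since a tame element can fix a non-rational point while permuting rational ones. Handling this case cleanly requires a delicate interplay between Riemann-Hurwitz, the divisibility conditions on $|G|$ coming from $|X(\FFF_{p^{2n}})|$, and the hypothesis $|G| > 84(g-1)$; a good portion of the proof's technical difficulty is concentrated here.
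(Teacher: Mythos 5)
Your proposal has the right ingredients in outline (Theorem \ref{thm:Fq2_auto} to constrain orbits, divisibility of $|G|$ against $|G|>g^2$, reduction to an Artin--Schreier cover), but there are concrete gaps that would sink it. The most serious is in your second step: Lemma \ref{lem:fine_cor} gives $|G| \bigm| 16p^3(p+1)\,d\,(c+d+1)$, and when $d=0$ the right-hand side is zero, so the divisibility is vacuous and yields no inequality at all. But $d=0$ is precisely the case containing the entire target family --- $y^m=x^p-x$ has genus $\tfrac12(m-1)(p-1)$, i.e.\ $c=m-1$, $d=0$ --- so the plan of enumerating $(c,d)$ from this divisibility cannot close the argument; your claim that ``the case $d=0$ forces $c$ into a narrow window'' is not a real constraint (and should not be, since $c$ ranges over all $m-1$ with $m\mid p+1$). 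Relatedly, in your third step the Deuring--\v{S}afarevi\v{c} formula controls the $p$-rank of $X/P$, not its genus; the genus of $X/P$ is exactly $d$, and proving $d=0$ is the heart of the theorem. The paper does this not by divisibility but by the standard Type I--IV split of Lemma \ref{lem:worb_types} followed by Riemann--Hurwitz computations keyed to how the wild and tame orbits partition the $\FFF_{p^2}$-rational points (Lemmas \ref{lem:max_noOne}, \ref{lem:max_wcase}, \ref{lem:max_wtcase}); your proposal never engages with this case structure.

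Two further points. First, your opening premise --- that all short orbits lie among the rational points, so that Lemma \ref{lem:fine_cor} applies globally --- is neither established nor actually needed: the paper explicitly allows the tame orbit to sit outside $X(\FFF_{p^2})$ and handles that possibility by a separate divisibility argument (the wild orbit size must divide the count of remaining rational points). You correctly flag this as ``the main obstacle,'' but an obstacle flagged is not an obstacle removed. Second, even granting $X/P\cong\mathbb{P}^1$, pinning down the equation $y^m=x^p-x$ with $m\mid p+1$ is not a matter of ``normalizer structure'': the paper's Lemma \ref{lem:artin_types} invokes the classification of Artin--Schreier curves with zero Hasse--Witt map from \cite{Irok91}, using superspeciality of $\FFF_{p^2}$-maximal curves in an essential way. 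Without that external input (or a substitute for it), your third step does not produce the explicit family.
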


As a first step, we determine the structure of a Sylow subgroup of $G$.

\begin{prop}\label{hermp^2}
Let $X$ be a $\FFF_{p^2}$ maximal curve of positive genus, and suppose $G$ has order divisible by $p$. Then we can write
\[g = \frac{1}{2}c(p-1) + dp.\]
Furthermore, unless $X$ is isomorphic to the Hermitian curve, we must have that $p^2$ does not divide $|G|$.
\end{prop}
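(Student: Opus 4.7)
The plan splits into the two claims, both handled by studying the ramification filtration of a $p$-subgroup of $G$. For the first claim, Cauchy's theorem gives an order-$p$ subgroup $\langle \sigma \rangle \subseteq G$. By Proposition~\ref{prop:max_pgrp}, $\sigma$ fixes a unique point $P$ and acts freely elsewhere, so the cover $X \to Y := X/\langle \sigma \rangle$ is branched only over the image of $P$. Since the action of the $p$-group $\langle \sigma \rangle$ on the one-dimensional cotangent space at $P$ is trivial (a nontrivial character would land in the $p$-torsion of $k^\times$, which is zero), the first two ramification groups equal $\langle \sigma \rangle$, and standard wild-ramification theory then forces the filtration to drop to the trivial group at some step $m+1$ with $m \ge 1$ and $\gcd(m, p) = 1$. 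Hence $d_P = (m+1)(p-1)$, and Riemann--Hurwitz rearranges to
\[ g = p g_Y + \tfrac{1}{2}(m-1)(p-1), \]
so $c := m-1$ and $d := g_Y$ exhibit the desired form (when $p = 2$, $m$ odd forces $c$ even, so $\tfrac{1}{2}c(p-1) \in \Z$).

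For the second claim, assume $p^2 \mid |G|$ and take $H \subseteq G$ of order $p^2$. Since $H$ is abelian, Proposition~\ref{prop:max_pgrp} gives a unique fixed point $P$ and Hasse--Arf applies to the filtration $G_i(P)$, whose nontrivial terms can only equal $H$ or some subgroup $H'$ of order $p$. As before $G_0(P) = G_1(P) = H$ through some step $m_1$; the filtration then either (A) descends directly to $\{1\}$, or (B) descends to $H'$ and stays there through some step $m_2$ before becoming trivial. Hasse--Arf forces $\gcd(m_1, p) = 1$, and in case~(B) gives $p \mid (m_2 - m_1)$, so $m_2 \ge m_1 + p$. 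Computing $d_P$ in each case and substituting into Riemann--Hurwitz
\[ 2g - 2 = p^2(2 g_Y - 2) + d_P, \]
the Ekedahl superspecial bound $g \le \tfrac{1}{2}p(p-1)$ (available since maximal curves are superspecial by Proposition~\ref{prop:sup_is_mm}) first rules out $g_Y \ge 1$, and with $g_Y = 0$ the inequality $d_P \le 3p^2 - p - 2$ pins down all parameters. Case~(A) yields $m_1 = 1$ and $g = 0$, contradicting positive genus, while case~(B) forces $m_1 = 1$ and $m_2 = p+1$, giving $g = \tfrac{1}{2}p(p-1)$. At this extremal genus, the Rück--Stichtenoth uniqueness theorem identifies $X$ as the Hermitian curve.

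The principal obstacle is the simultaneous ramification-filtration analysis of $H$ in both the cyclic and elementary abelian cases, and extracting from Hasse--Arf the sharp inequalities that trap $m_1$ and $m_2$ at their unique allowed values. Once this structural input is established, the Riemann--Hurwitz bookkeeping against the superspecial genus bound is routine, and the final step invokes only the classical uniqueness of the Hermitian curve among $\FFF_{p^2}$-maximal curves of genus $\tfrac{1}{2}p(p-1)$.
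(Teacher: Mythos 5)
Your proposal is correct and follows essentially the same route as the paper: an order-$p$ subgroup plus Riemann--Hurwitz for the first claim, and for the second the ramification filtration of an order-$p^2$ subgroup, the mod-$p$ congruence between consecutive jumps (you cite Hasse--Arf, the paper cites \cite[Lemma 11.75(v)]{Hirs13}), the genus bound $g \le \tfrac{1}{2}p(p-1)$, and uniqueness of the Hermitian curve at the extremal genus. The only cosmetic differences are that the paper first bounds the Sylow $p$-subgroup by $p^2$ via Proposition \ref{prop:max_pgrp} and folds the Hermitian exclusion into the hypothesis $2g<p(p-1)$, whereas you allow $g=\tfrac{1}{2}p(p-1)$ and invoke R\"uck--Stichtenoth at the end; also your appeal to Hasse--Arf for $\gcd(m_1,p)=1$ is not really what that theorem gives, but this fact is never used since the genus bound already forces $m_1=1$.
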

\begin{proof}
Let $H$ be a Sylow $p$-subgroup of $G$, and let $H'$ be a subgroup of $H$ of size $p$. Per Proposition \ref{prop:max_pgrp}, $H'$ stabilizes a single point $P$. Then
\[\frac{2g-2}{p} = 2d - 2 + \frac{(c+2)(p-1)}{p}\]
where $i = c+1$ is the final index so $G_{i}(P)$ has order $p$ and $d$ is the genus of $X/H'$. This gives $g$ in the form $\frac{1}{2}c(p-1) + dp$.

Next, we recall that the Hermitian curve is the only maximal curve attaining the maximal genus $\frac{1}{2}p(p-1)$. Then, unless the curve $X$ is Hermitian, $p^2 + 2gp < p^3$. Then, if $H$ is not of order $p$, it is of order $p^2$. But then we can write
\[\frac{2g-2}{p^2} = 2f - 2 + \frac{1}{p^2}\left((c+2)(p^2 - 1) + bp(p-1)\right)\]
for some nonnegative integers $f, b, c$. We are here using the fact that, for $a_1, a_2 \ge 1$, if $G_{a_1 + 1}(P)$ is distinct from $G_{a_1}(P)$ and $G_{a_2+1}(P)$ is distinct from $G_{a_2}(P)$, then $a_1 - a_2 \equiv 0 \pmod p$, see \cite[Lemma 11.75(v)]{Hirs13}. Simplifying gives
\[2g = 2fp^2 + c(p^2 -1) + bp(p-1).\]
But $2g < p(p-1)$ for $X$ not the Hermitian curve, so we get $f = c = b = g = 0$ unless the curve is Hermitian.
\end{proof}

We will use the notation $g = \frac{1}{2}c(p-1) + dp$ throughout this section. In this notation, $d$ is the genus of the quotient curve from dividing by an order $p$ subgroup.

As is traditional when studying large automorphism groups (see \cite{Henn78} or \cite{Naka87}), we use the following lemma to split into cases. For this lemma, a \emph{wild} short orbit is an orbit where each point has nontrivial stabilizer of order dividing $p$, and a \emph{tame} short orbit is an orbit where each point has nontrivial stabilizer of order not dividing $p$.

\begin{lem}
\label{lem:worb_types}
Let $G$ be the automorphism group of a genus $g \ge 2$ curve $X$ defined over an algebraically closed field of characteristic $p$.  If $|G| > 84(g-1)$, then one of the following four cases applies.

(I) $p \ne 2$ and $G$ has precisely three short orbits in $X$, one wild and two tame, with each point in the tame orbits having an order-two stabilizer.

(II) $G$ has precisely two short orbits, both wild.

(III) $G$ has one wild short orbit and no other short orbits.

(IV) $G$ has one wild short orbit, one tame short orbit, and no other short orbits.
\end{lem}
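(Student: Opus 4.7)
The plan is to apply the Riemann--Hurwitz formula \eqref{RH} to the quotient map $\pi \colon X \to Y := X/G$ and extract the short-orbit structure from the resulting arithmetic constraints. Writing $g_Y$ for the genus of $Y$ and $\Sigma := \sum_{Q \in Y} d_Q / e_Q$, the hypothesis $|G| > 84(g-1)$ gives $(2g-2)/|G| < 1/42$, so \eqref{RH} becomes
\[ 2g_Y - 2 + \Sigma < \frac{1}{42}. \]
A short check eliminates $g_Y \geq 1$: for $g_Y \geq 2$ the left side already exceeds $2$, and for $g_Y = 1$ we would need $\Sigma < 1/42$, but any ramified point contributes at least $1/2$, forcing $\pi$ to be \'etale and hence $g = 1$, contradicting $g \geq 2$. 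Hence $g_Y = 0$ and $2 < \Sigma < 2 + \frac{1}{42}$.

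The next step is to record per-orbit lower bounds. A tame point $Q$ contributes $1 - 1/e_Q \geq 1/2$. For a wild point with $e_Q = e' p^a$, $p \nmid e'$, $a \geq 1$, the Sylow $p$-subgroup of $G_0(P)$ coincides with $G_1(P)$ and has order $p^a$, so
\[ d_Q \geq (e_Q - 1) + (p^a - 1), \]
giving $d_Q/e_Q \geq 1$, and strict whenever $p$ is odd (since then $p^a \geq 3$). Let $r$ and $s$ denote the numbers of wild and tame short orbits; then $\Sigma \geq r + s/2$. The sharp classical Hurwitz argument applied to purely tame configurations shows the infimum of $\Sigma$ subject to $\Sigma > 2$ is $2 + 1/42$, realized by signature $(2,3,7)$, so the upper bound on $\Sigma$ forces $r \geq 1$. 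Combining $r \geq 1$ with $\Sigma < 2 + 1/42 < 5/2$ kills $r \geq 3$, every $(r, s) = (2, s)$ with $s \geq 1$, and every $(r, s) = (1, s)$ with $s \geq 3$.

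The surviving possibilities $(r, s) \in \{(2, 0), (1, 0), (1, 1), (1, 2)\}$ correspond exactly to cases (II), (III), (IV), and (I) of the lemma. To pin down (I), write $W$ for the wild contribution and $1 - 1/e_1, 1 - 1/e_2$ for the two tame ones; $\Sigma < 2 + 1/42$ rearranges to
\[ \frac{1}{e_1} + \frac{1}{e_2} > W - \frac{1}{42} \geq \frac{41}{42}, \]
whose only integer solution with each $e_i \geq 2$ is $e_1 = e_2 = 2$. Since a tame stabilizer has order coprime to $p$, this forces $p \neq 2$, matching (I). The main obstacle is just the case enumeration and keeping careful track of where the inequality $\Sigma > 2$ is strict; the ingredients (Riemann--Hurwitz together with the standard estimate on the ramification filtration) are routine once the classification is set up.
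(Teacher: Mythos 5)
Your proof is correct and complete. The paper itself does not prove Lemma \ref{lem:worb_types}; it states it as a standard fact and cites Henn and Nakajima, and your Riemann--Hurwitz case analysis (forcing $g_Y=0$, bounding $2<\Sigma<2+\tfrac{1}{42}$, using $d_Q/e_Q\geq 1$ for wild orbits via $|G_1(P)|=p^a$ and the classical $(2,3,7)$ minimum to rule out the purely tame case, then enumerating $(r,s)$) is precisely the standard argument given in those references.
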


The first two cases are easily dealt with.
\begin{lem}
\label{lem:max_noOne}
There is no $\mathbb{F}_{p^2}$ maximal curve $X$ of Type I of Lemma \ref{lem:worb_types}.
\end{lem}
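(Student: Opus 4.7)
The plan is to derive a contradiction by combining the Type I ramification data, Theorem \ref{thm:Fq2_auto}, and the Weil bounds on rational-point counts for an $\FFF_{p^2}$-maximal curve.

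First, I would show that the wild short orbit $O_w$ consists entirely of $\FFF_{p^2}$-rational points. For $P \in O_w$, its stabilizer contains a nontrivial $p$-element $\tau$; by Theorem \ref{thm:Fq2_auto} this automorphism commutes with the Frobenius $F$, so $\tau$ also fixes $F(P)$, and Proposition \ref{prop:max_pgrp} then forces $F(P) = P$. Next, since $|G| > 84(g-1)$ forces $g_{X/G} = 0$, Riemann-Hurwitz applied to the three Type I short orbits yields
\[
\frac{d_{Q_w}}{e_{Q_w}} = 1 + \frac{2g-2}{|G|} < \frac{43}{42},
\]
and invoking Proposition \ref{hermp^2} (the Hermitian case is handled separately, since its genus is $\tfrac12 p(p-1)$ and can be checked directly against Type I) gives $|G_1(P_w)| = p$. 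Working out $d_{Q_w} = (e_{Q_w}-1) + s(p-1)$ for the appropriate integer $s \ge 1$ then pins down $n_w := |G|/e_{Q_w}$ as a divisor of $2g-2$.

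With this rigid description of $O_w$, I would split into cases based on the Frobenius action on the two tame orbits $O_1, O_2$ of common size $|G|/2$: being the unique pair of short orbits of this size and stabilizer type, $F$ must either preserve each setwise or swap them. If both tame orbits are pointwise $\FFF_{p^2}$-rational, the hypothesis of Proposition \ref{prop:aut_divis_fine} and Lemma \ref{lem:fine_cor} holds, giving
\[
|G| \bigm| 16p^3(p+1)d(c+d+1), \qquad g = \tfrac12 c(p-1) + dp,
\]
and combining this with $|G| > 84(g-1)$ and the rational-point congruence $|G| \bigm| (p^2 + 2gp + 1 - n_w)$ eliminates the configuration. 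In the remaining subcases some tame orbit fails to be $\FFF_{p^2}$-rational, so those $|G|/2$ points lie in $X(\FFF_{p^{2k}}) \setminus X(\FFF_{p^2})$ for $k = 2$ (when $F$ preserves each setwise) or $k = 4$ when they are swapped, and the Weil formula $\#X(\FFF_{p^{2n}}) = p^{2n}+1+(-1)^{n+1}2gp^n$ yields $|G|/2 \le p(p+1)(p^2-p-2g)$ (resp.\ a similar $\FFF_{p^8}$ bound in the swapping case), which is incompatible with $|G| > 84(g-1)$ and the divisibility on $|G|$.

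The main obstacle will be the bookkeeping in the case where a tame orbit is $F$-stable setwise but not pointwise $\FFF_{p^2}$-rational. In that situation $F$ acts on the orbit as a nontrivial $G$-equivariant permutation, dictated by the cyclic quotient $C_G(\sigma)/\langle \sigma\rangle$ for $\sigma$ an order-two stabilizer, which constrains the $F$-orbit lengths inside the $G$-orbit. Reconciling this with the integrality constraint $n_w \bigm| (2g-2)$ and the $p$-Sylow rigidity from Proposition \ref{hermp^2} is where the bulk of the case analysis will live, and is what I expect to drive the final contradiction.
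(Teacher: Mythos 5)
There is a genuine gap: your proposal assembles the right ramification data but never closes the argument, and the route you sketch for closing it is both unnecessary and not actually carried out. You correctly get $g_{X/G}=0$, $|G_1(P_w)|=p$ from Proposition \ref{hermp^2}, and $n_w((c+1)(p-1)-1)=2g-2$ where $n_w$ is the wild orbit size. At this point the paper is essentially done: Proposition \ref{prop:max_pgrp} gives $n_w=1+np$, and writing $g=\tfrac12 c(p-1)+dp$ (Proposition \ref{hermp^2}) turns the Riemann--Hurwitz identity into $(1+np)(cp+p-c-2)=c(p-1)+2dp-2$. For $n\geq 1$ the left side forces $2g-2\geq (1+p)(p-2)$, i.e.\ $g\geq\tfrac12 p(p-1)$, which only the Hermitian curve attains --- and the Hermitian curve is Type IV; for $n=0$ the identity collapses to $p=2dp$, which is impossible. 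You record that $n_w\mid 2g-2$ but never compare $n_w\equiv 1\pmod p$ against the explicit form of $2g-2$, which is the entire content of the proof.

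Instead, you pivot to a case analysis on how Frobenius permutes the two tame orbits, and you explicitly defer the contradiction there (``is what I expect to drive the final contradiction''). That plan does not go through as described: in the subcase where a tame orbit lies outside $X(\FFF_{p^2})$, the Weil count gives $|G|/2\leq p(p+1)(p^2-p-2g)$, but this is perfectly compatible with $|G|>84(g-1)$ for most admissible $g$ (e.g.\ $g$ of order $p$), so no contradiction follows without substantial further divisibility input that you have not pinned down. The opening observation that the wild orbit is $\FFF_{p^2}$-rational (via Theorem \ref{thm:Fq2_auto} and Proposition \ref{prop:max_pgrp}) is correct but is not needed; the lemma is a pure Riemann--Hurwitz counting statement.
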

\begin{proof}
The Hermitian curve is of Type IV, so by Proposition ~\ref{hermp^2} we can assume that a maximal $p$-subgroup of $X$ is of order $p$. Recall the notation $g = \frac{1}{2}c(p-1) + dp$. The size of the wild orbit is in the form $1 + np$ for some integer $n$ by \ref{prop:max_pgrp}. Letting $s$ denote the size of the stabilizer for the wild orbit, the Riemann-Hurwitz formula gives
\[\frac{2g - 2}{(1+np)s} = \frac{(c+1)(p-1) - 1}{s} = \frac{(c+1)(p-1) - 1}{s}.\]
Then
\[(1+np)(cp +p - c - 2) = 2g - 2.\]
If $n > 0$, the only case where genus is at most $\frac{1}{2}p(p-1)$ is $n = 1, c = 0$, and no case has genus strictly less than this. Since the Hermitian curve is not of this type, we have $n = 0$ and get
\[p = 2dp\]
which cannot happen.
\end{proof}

For Type II, we in fact have a result for $\mathbb{F}_{q^2}$ maximal curves instead of just $\mathbb{F}_{p^2}$ maximal curves.
\begin{prop}
There is no $\mathbb{F}_{q^2}$-maximal curve $X$ of Type II of Lemma \ref{lem:worb_types}.
\end{prop}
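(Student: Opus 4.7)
The plan is to derive a contradiction from Proposition \ref{prop:max_pgrp}, which pins down the fixed locus of every $p$-subgroup of $G$ as a single point. Since both short orbits $O_1$ and $O_2$ are wild in Type II, for any choice of representatives $P_1 \in O_1$ and $P_2 \in O_2$, the stabilizers $G_{P_1}$ and $G_{P_2}$ each have order divisible by $p$, so they contain nontrivial $p$-subgroups $H_1$ and $H_2$ respectively.

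The first step is to extend each $H_i$ to a Sylow $p$-subgroup $S_i$ of $G$ and argue that $S_i$ still fixes $P_i$. As a $p$-subgroup of $G$, Proposition \ref{prop:max_pgrp} gives $S_i$ a unique fixed point $Q_i$. Picking any nontrivial element $\sigma \in H_i$, the cyclic group $\langle\sigma\rangle$ is itself a $p$-subgroup, so Proposition \ref{prop:max_pgrp} applied to $\langle\sigma\rangle$ says that $\sigma$ fixes only the point $P_i$. But $\sigma \in S_i$ also forces $\sigma$ to fix $Q_i$, whence $Q_i = P_i$.

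The final step uses the conjugacy of Sylow $p$-subgroups. Choose $g \in G$ with $g S_1 g^{-1} = S_2$. The unique fixed point of $g S_1 g^{-1}$ is $g P_1$, so $g P_1 = P_2$. This places $P_1$ and $P_2$ in the same $G$-orbit, contradicting $O_1 \neq O_2$.

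I do not anticipate substantive obstacles. The key conceptual point, already established by Proposition \ref{prop:max_pgrp}, is that on an $\mathbb{F}_{q^2}$-maximal or minimal curve the fixed points of Sylow $p$-subgroups of $G$ form a single $G$-orbit; this rules out Type II immediately, since having two distinct wild short orbits would demand at least two such orbits of Sylow fixed points. Note that this argument works uniformly for $\mathbb{F}_{q^2}$-maximal and minimal curves and does not require the refined divisibility results of Propositions \ref{prop:aut_divis_rough} or \ref{prop:aut_divis_fine}.
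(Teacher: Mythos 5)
Your proof is correct, and it takes a genuinely different route from the paper's. The paper argues by counting modulo $p$: using Theorem \ref{thm:Fq2_auto} it knows the set of $\mathbb{F}_{q^2}$-rational points (and each set of $\mathbb{F}_{q^{2n}}$-points that are not $\mathbb{F}_{q^2}$-rational) is $G$-stable, observes via Proposition \ref{prop:max_pgrp} that every wild orbit has size $\equiv 1 \pmod p$ while free orbits have size $\equiv 0 \pmod p$, and then notes that the $\mathbb{F}_{q^2}$-rational locus has $\equiv 1 \pmod p$ points (so it absorbs exactly one wild orbit) while every candidate home for the second wild orbit has cardinality $\equiv 0 \pmod p$. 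Your argument instead is pure group theory on top of Proposition \ref{prop:max_pgrp}: a nontrivial $p$-element fixing $P$ forces the whole ambient Sylow $p$-subgroup to have $P$ as its unique fixed point, and Sylow conjugacy then puts all such fixed points in a single $G$-orbit, so there is at most one wild orbit. Each step checks out (in particular, "acts freely on all other points" is exactly what lets you conclude $Q_i = P_i$). Your route is shorter, avoids Theorem \ref{thm:Fq2_auto} entirely, works verbatim for minimal curves, and in fact only uses that the curve has $p$-rank zero; it also proves the slightly stronger statement that $G$ has at most one wild short orbit. What the paper's argument buys in exchange is the extra localization that the unique wild orbit must sit inside the $\mathbb{F}_{q^2}$-rational points, a fact that feeds into the case analysis of the later lemmas even though it is not part of this proposition's statement.
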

\begin{proof}
From Proposition \ref{prop:max_pgrp}, we see each wild orbit is of size congruent to one modulo $p$. One wild orbit must be a subset of the $\mathbb{F}_{q^2}$ rational points, since the number of such points is congruent to one modulo $p$. The number of remaining $\mathbb{F}_{q^2}$ rational points is then divisible by $p$, so the other wild orbit cannot lie among the $\mathbb{F}_{q^2}$ rational points. The other wild orbit must then lie in the set of $\mathbb{F}_{q^{2n}}$ rational points that are not $\mathbb{F}_{q^2}$ rational. However, these sets of points also have order divisible by $p$, so this is again impossible.
\end{proof}

The following lemma allows our divisibility-based methods to become precise at the end.
\begin{lem}
\label{lem:artin_types}
Let $X$ be a $\FFF_{p^2}$ maximal curve of genus at least two. If $X/H$ is rational for some subgroup $H\subset G$ of order $p$, then $X$ is isomorphic to $y^m = x^p - x$ for some $m$ dividing $p + 1$.
\end{lem}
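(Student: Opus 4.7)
The plan is to realize $X \to X/H$ as an Artin-Schreier cover, apply the fundamental equation to constrain $m$, and then identify $X$ explicitly.

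By Proposition~\ref{prop:max_pgrp}, $H$ fixes a unique point $P \in X$ and acts freely elsewhere, so $X \to X/H \cong \mathbb{P}^1$ is totally ramified of degree $p$ at $P$. Choosing an affine coordinate $x$ on $X/H$ with pole at the image of $P$, Artin-Schreier theory presents $X$ as $y^p - y = f(x)$ for some $f \in k[x]$; after successive substitutions $y \mapsto y + h(x)$, which replace $f$ by $f + h^p - h$, we may assume $f$ has no monomial of degree divisible by $p$. Then $m := \deg f$ is coprime to $p$, and Riemann-Hurwitz gives $g = (p-1)(m-1)/2$, so $m \geq 2$ (since $g \geq 2$). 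Theorem~\ref{thm:Fq2_auto} says $H$ commutes with the Frobenius $F$, so the unique fixed point $P$ is also $F$-fixed, hence $\FFF_{p^2}$-rational.

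The Weierstrass semigroup $H(P)$ contains $\langle p, m \rangle$ from the pole orders of $x$ and $y$; by Sylvester--Frobenius, $\langle p, m \rangle$ has exactly $(p-1)(m-1)/2 = g$ gaps, matching $|\mathbb{N} \setminus H(P)|$, so $H(P) = \langle p, m \rangle$. The fundamental equation (Proposition~\ref{prop:fund_eq}) at $P_0 = P$ yields a function with divisor $pQ + F(Q) - (p+1)P$ for any closed $Q$, so $p+1 \in H(P)$. Writing $p+1 = ap + bm$ with $a, b \geq 0$ forces $a = 0$ and $b = (p+1)/m$, hence $m \mid p+1$; the alternative $a = 1, m = 1$ yields $g = 0$ and is excluded.

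Finally, to exhibit $X$ as $\overline{k}$-isomorphic to $y^m = x^p - x$, equivalently to $y^p - y = x^m$ via the variable swap, the plan is to use the complete linear system $|(p+1)P|$. By the semigroup computation it has basis $\{1, y, y^2, \ldots, y^k, x\}$ with $k := (p+1)/m$, giving an embedding $X \hookrightarrow \mathbb{P}^{k+1}$; maximality of $X$ forces the image to lie on a Hermitian variety (the Korchm\'{a}ros--Torres natural embedding), and together with the Veronese-type identities $y^j y^{j'} = y^{j+j'}$ and the Artin-Schreier relation, this pins down the image uniquely up to $\overline{k}$-isomorphism as the curve $y^p - y = x^m$. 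This identification is the main obstacle and relies on the classification of $\FFF_{p^2}$-maximal curves with prescribed Weierstrass data; the earlier stages are essentially bookkeeping.
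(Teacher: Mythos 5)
Your reduction to an Artin--Schreier presentation $y^p-y=f(x)$ with $\deg f=m$ prime to $p$, the computation $g=(p-1)(m-1)/2$, the identification $H(P)=\langle p,m\rangle$ of the Weierstrass semigroup at the totally ramified point, and the deduction $m\mid p+1$ from the fundamental equation are all correct, and this is a genuinely different, more self-contained route to the divisibility than the paper takes. The paper's entire proof is a citation: by \cite{Irok91}, an Artin--Schreier curve whose Hasse--Witt map is zero must be of the form $y^m=x^p-x$ with $m\mid p+1$, and $\FFF_{p^2}$-maximal curves are superspecial, hence have zero Hasse--Witt map. That one citation delivers both the divisibility \emph{and}, crucially, the normal form $f(x)=x^m$.

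The gap in your proposal is exactly that last point. Knowing that $X$ is given by $y^p-y=f(x)$ with $\deg f=m$ and $m\mid p+1$ does not identify $X$ with $y^p-y=x^m$: one must show that $f$ can be reduced to a monomial by affine changes of $x$ and substitutions $y\mapsto y+h(x)$, and this is where essentially all of the content of the lemma lies. Your appeal to the Korchm\'aros--Torres natural embedding does not supply an argument: the image lying on a Hermitian variety of $\mathbb{P}^{k+1}$ is a property shared by many curves and does not by itself ``pin down the image uniquely,'' and the ``classification of $\FFF_{p^2}$-maximal curves with prescribed Weierstrass data'' that you invoke is precisely the statement that needs proving, not a tool you may assume. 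To close the gap you would need either to carry out such a classification (for instance by converting maximality into the condition $\#\{x_0\in\FFF_{p^2}:\mathrm{Tr}_{\FFF_{p^2}/\FFF_p}(f(x_0))=0\}=p+(p-1)(m-1)$ and showing this forces the normal form) or, as the paper does, to invoke \cite{Irok91}, whose proof computes the Cartier operator on the differentials $x^i y^{-j}\,dx$ and sees directly that its vanishing kills every coefficient of $f$ except the leading one.
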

\begin{proof}
The main theorem of \cite{Irok91} states that any Artin-Schreier curve with zero Hasse-Witt map must be of the form $y^m = x^p - x$ with $m$ dividing $p+1$. Since $\FFF_{p^2}$ maximal curves are superspecial, they by definition have zero Hasse-Witt map. Since $X/H$ is rational, $X$ is an Artin-Schreier curve, and the theorem applies.

\end{proof}

At this point in the proof of Theorem \ref{thm:max}, we can assume that there at most one short tame orbit and a unique wild orbit in the action of $G$ on $X$. There are three cases to consider. First, for relatively low genus curves, it is possible there is a free orbit among the $\FFF_{p^2}$ points. Otherwise, the $\FFF_{p^2}$ rational points either consist of just one wild orbit, a case dealt with in Lemma \ref{lem:max_wcase}, or are the union of a tame orbit with a wild orbit, a case dealt with in Lemma \ref{lem:max_wtcase}.
\begin{lem}
Suppose $X$ is a $\mathbb{F}_{p^2}$ maximal curve of genus $g\geq 2$ and $p > 7$. Suppose $X \rightarrow X/G$ is wildly ramified and that there is a free orbit among the $\FFF_{p^2}$ rational points. Then $|G| \le g^2$ or $|G| \le 84(g-1)$.
\end{lem}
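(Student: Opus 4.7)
My plan starts from the estimate coming from the free-orbit hypothesis and then splits into an easy regime and a bounded case analysis.

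\textbf{Step 1 (Primary estimate).} A free $G$-orbit inside the $\FFF_{p^2}$-rational points has size $|G|$, so the hypothesis immediately yields
\[
|G| \;\le\; |X(\FFF_{p^2})| \;=\; p^2+2gp+1.
\]
A short computation shows that $p^2+2gp+1 \le g^2$ precisely when $g \ge p+\sqrt{2p^2+1}$. So whenever $g \ge p(1+\sqrt{2})$ we immediately have $|G| \le g^2$ and are done; it suffices to treat $g < p(1+\sqrt{2})$.

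\textbf{Step 2 (Reductions).} The Hermitian curve has $G$ transitive on $X(\FFF_{p^2})$, so it admits no free orbit; excluding it, Proposition \ref{hermp^2} forces a Sylow $p$-subgroup of $G$ to have order exactly $p$ and gives $g = \tfrac12 c(p-1) + dp$. The earlier lemmas place us in Type III or Type IV of Lemma \ref{lem:worb_types}. A residue count modulo $p$ against $|X(\FFF_{p^2})| \equiv 1 \pmod{p}$ then forces the unique wild orbit to lie in $X(\FFF_{p^2})$: every free orbit and every tame orbit has size divisible by $p$, while the wild orbit has size $|G|/(pm)$ coprime to $p$, and it must account for the nonzero residue.

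\textbf{Step 3 (Case analysis).} In the range $g < p(1+\sqrt{2}) < 2.42\,p$ the constraints $g \ge 2$ and $g = \tfrac12 c(p-1)+dp$ leave only $O(1)$ solutions: for $p > 7$, either $d = 0$ with $c \in \{1,\ldots,4\}$, or $d = 1$ with $c \in \{0,1,2\}$, or $d = 2$ with $c = 0$.  If $d = 0$, Lemma \ref{lem:artin_types} gives $X \cong \{y^m = x^p-x\}$ for some $m \mid p+1$, and for each such curve the explicit automorphism group (as in \cite{Irok91}) satisfies $|G| > |X(\FFF_{p^2})|$, ruling out a free orbit; this subcase is vacuous. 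If $d \ge 1$, invoking Lemma \ref{lem:fine_cor} (after first checking its hypothesis, namely that any Type IV tame orbit also lies in $X(\FFF_{p^2})$, either via a refined residue argument or by working at a suitable $\FFF_{p^{2n}}$) gives $|G|/p \mid 16(p+1)\,d(c+d+1)$. Intersecting this with $|G| \le p^2+2gp+1$ leaves a short explicit list of candidate values of $|G|$. For each, the orbit-sum identity
\[
\frac{|G|}{pm} + \frac{|G|}{t} + k|G| \;=\; p^2+2gp+1, \qquad k \ge 1,
\]
(with the tame term present only in Type IV) eliminates most candidates by non-integrality of $m$ or $t$, and one verifies the survivors all satisfy $|G| \le 84(g-1)$ when $p > 7$.

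\textbf{Main obstacle.} The bulk of the work is Step 3 in the subcase $d \ge 1$: handling the simultaneous constraints of Lemma \ref{lem:fine_cor} and the orbit-sum identity, and in particular establishing that any Type IV tame orbit lies in $X(\FFF_{p^2})$ so that Lemma \ref{lem:fine_cor} applies.
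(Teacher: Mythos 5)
Your Steps 1 and 2 are sound and track the paper's strategy: the free orbit forces $|G|\le p^2+2gp+1$, which disposes of all $g$ larger than roughly $(1+\sqrt2)p$, and the residue count modulo $p$ correctly places the unique wild orbit among the $\FFF_{p^2}$-rational points (the paper takes this for granted; your justification of it is fine). The first real gap is in Step 3 for $d\ge 1$: the hypothesis of Lemma \ref{lem:fine_cor} — that \emph{every} short orbit lies among the rational points — cannot be obtained by a ``refined residue argument.'' A tame orbit has size $|G|/t$ with $p\nmid t$ and $p\mid |G|$, so it contributes $0 \bmod p$ wherever it sits, and the residue count is blind to its location. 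The paper does not prove the tame orbit is rational; it splits into two branches, and when the tame orbit lies \emph{outside} $X(\FFF_{p^2})$ it runs a different divisibility argument (the wild orbit size $np+1$ must divide the number of remaining rational points, giving $np+1 \mid n(n+c)+2d+c+1$). You would need to supply that second branch; as written your plan only covers one of the two configurations.

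The more serious gap is the assertion that the orbit-sum identity together with the divisibility from Lemma \ref{lem:fine_cor} leaves a short list whose survivors satisfy $|G|\le 84(g-1)$. The paper's own computations show that in the subcase $n=1$ (wild orbit of size $p+1$, $d\ge 1$) these arithmetic constraints alone only bound $p$ by a few hundred (the paper gets $p\le 9\cdot 64+3$ in the relevant range), leaving a large unexamined set of primes. To close that subcase the paper uses genuinely group-theoretic input: the Sylow $p$-subgroup of a point stabilizer acts freely, hence transitively, on the other $p$ points of the wild orbit, so $G$ induces a $2$-transitive group on $p+1$ points with cyclic two-point stabilizers; Kantor's classification then forces this group to be $\mathrm{PSL}(2,p)$, $\mathrm{PGL}(2,p)$, or to have a regular normal elementary abelian subgroup, whence $p$ is a Mersenne prime. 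Only after intersecting ``Mersenne'' with the divisibility bounds does the list become finite and checkable ($p=31,127$ beyond the excluded small primes). Your outline contains no analogue of this step, and without it the $n=1$, $d\ge 1$ subcase does not close; nor does your plan distinguish the subcases $n=0$, $n=1$, $n\ge 2$, which the paper must treat by quite different arguments.
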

\begin{proof}
Write $g = dp + \frac{1}{2}c(p-1)$ and write $np+1$ for the length of the wild orbit.

First, we deal with the case that $d > 0$ and $n > 1$. In this case, there must be a free orbit of size at most $(2d + c + 1)p^2 - (c+n)p$, which is  less than $g^2$ for $d > 2$, $d = 2$ and $c > 0$, or $d = 1$ and $c > 2$. We also have a lower bound of $p(np+1)$ on the size of the group. Then, if $|G| > g^2$, we must have
\[ 5p^2 - np \ge np^2 + p\]
so $n < 5$.

If the tame orbit is outside the $\FFF_{p^2}$ rational points, we must have the wild orbit size dividing the number of other points in $\FFF_{p^2}$. Then $np + 1 \bigm| (2d + c + 1)p - (c + n)$, so
\[np + 1 \bigm| n(n+c) + 2d + c + 1.\]
For $2 \le n \le 4$, $d > 0$, and $2d + c \le 4$, it is a quick computation to verify this only has solutions for $p = 2,3, 5, 7$.

Now, if the tame orbit is among the $\FFF_{p^2}$ rational points, we use Lemma \ref{lem:fine_cor} to say
\[np + 1 \bigm| 16(p+1)d(c+d+1)\]
which, from $p(n-1) = (np+1) - (p+1)$ gives
\[np+1 \bigm| 16(n-1)d(c+d+1).\]
For $2d + c \le 4$, we then have $np + 1 \bigm| 96(n-1)$ or $np + 1 \bigm| 64(n-1)$. Checking for $p$ with $n = 2, 3, 4$ is another easy computation. The only $p$ that can satisfy these congruences for $n$ in this range are $p = 2, 5$, and the case is done.

Now suppose that $n = 1$ with $d > 0$. In this case, the stabilizer of each point of the wild orbit is necessarily transitive on the other points, so that $G$ acts double transitively on the wild orbit. Take $\bar{G}$ to be the permutation group $G$ induces on these $p+1$ points. By element counting, no $\alpha \in \bar{G}$ that fixes two points of the orbit can have order more than $p-1$. But subgroups of the stabilizer of a point of size not dividing $p$ are necessarily cyclic, so the stabilizer of any pair of points is cyclic.

Then, by \cite{Kant72}, and since the $2$-transitive action is on a set of size $p+1$ and not $p^k + 1$ for some larger $k$, we know that $\bar{G}$ is either isomorphic to $\text{PSL}(2, p)$, $\text{PGL}(2, p)$, or has a regular normal subgroup. But $\text{PSL}(2, p)$ is of size $\frac{1}{2}(p^3 - p)$ and $\text{PGL}(2, p)$ is still larger, while $g < \frac{5}{2}p$ being necessary for $|G| > g^2$ implies the number of $\FFF_{p^2}$ points outside this orbit is at most $6p^2 - 2p$, so $p - 1 < 12$. Then exceptions can only exist for $p \le 11$. We can deal with the case $p = 11$ by noting $\text{PGL}(2, 11)$ is too large, and $\text{PSL}(2, 11)$ has size equal to $p^2 + 2gp - p$ only when $g$ is exactly $\frac{1}{2}5(p-1)$, outside the case $d \ne 0$.

If $\bar{G}$ instead has some regular normal subgroup, or a subgroup acting freely and transitively on the $p+1$ points, we know that the regular normal subgroup must be a elementary $p$-group. Excepting $p =2$ to $p + 1 = 3$, we then get that $p$ must be a Mersenne prime, of the form $2^k - 1$ for some $k$.

Taking an alternative tack, we can also write
\[\frac{2g-2}{rp(p+1)} = \frac{-k}{t} + \frac{(c+1)(p-1) - 1}{rp}\]
in this case for some positive integers $r, t$ and with $k$ either $0$ or $1$. Then
\[\frac{k|G|}{t} = p(c+1)(p-1) - 2dp\]
Then, for $d \le 2$, $k \ne 0$ for $p > 5$. But this tame orbit size must divide the number of $\FFF_{p^2}$ rational points that are not among the $p+1$, so
\[(c+1)(p-1) - 2d \bigm| (2d + c + 1)p - (c+1)\]
In fact, unless the tame orbit is among the $\FFF_{p^2}$ rational points, we see that $p+1 \bigm|(2d+c+1)p - (c+1)$ too, so $p+1 \bigm| 2d + 2c + 2$, which is impossible for $2d + c \le 5$, $d \ge 1$, and $p > 7$ (should $2d + c > 5$ with $d \ge 1$, $|G|$ is forced to be less than $g^2$). Then the tame orbit is $\FFF_{p^2}$ rational, so we also have that $(c+1)(p-1) - 2d$ divides $2(p^2 - 1)(p+1)$, so
\[(c+1)(p-1) - 2d \bigm| 32d(c + d + 1)^2\]
where the relation comes from consider the greatest common denominator of $p-1, p+1$ with the left hand side. This relation doubles as an inequality. If $c + 2d > 4$, $d > 0$, and $|G| > g^2$, we have from counting the number of points in the free orbit of $\FFF_{p^2}$ points that $p^2 - 14p + 9 < 0$ for $p < 17$. Otherwise, for $c + 2d \le 4$, $d > 0$, we get $p \le 9 \cdot 64 + 3$. The only Mersenne primes of interest are then $p = 3, 7, 31, 127$. Checking $31, 127$ for $(c, d) = (0, 1), (1, 1), (2, 1),  (0, 2)$, we see that they cannot be examples, so for $p > 7$ we again have no examples.

Now, suppose $d \ne 0$ and the wild orbit is of size $1$. In this case we can write
\[\frac{2g - 2}{rp} = \frac{-k}{t} + \frac{(c+1)(p-1) - 1}{rp}\]
for $k$ either $0$ or $1$. For $k = 0$, we get $2dp - p = 0$, which is impossible. For $k = 1$, we have
\[\frac{2dp - p}{rp} = \frac{-1}{t}\]
which is impossible for $d > 0$.

This just leaves all cases where $d = 0$. In this case, we know by Lemma \ref{lem:artin_types} that any possible curve is isomorphic to one of the form $y^m = x^p -  x$ with $m$ dividing $p+1$. But these do not have the specified type, as the automorphism group for each partitions the set of $\FFF_{p^2}$ rational points into either one or two short orbits. The lemma is proved.

\end{proof}

\begin{lem}
\label{lem:max_wcase}
For $p \ge 7$, the only $\FFF_{p^2}$ maximal curve of genus at least two with all of the $\FFF_{p^2}$ rational points forming a single wild orbit is the Hermitian curve.
\end{lem}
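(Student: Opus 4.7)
The plan is to argue by contradiction, assuming $X$ is not Hermitian. By Proposition~\ref{hermp^2}, a Sylow $p$-subgroup of $G := \mathrm{Aut}(X)$ then has order exactly $p$, and we may write $g = \tfrac{1}{2}c(p-1) + dp$. Let $N := p^2 + 2gp + 1$ be the size of the wild orbit and $s := |G|/N$ the stabilizer order of a wild-orbit point; since this stabilizer contains a Sylow $p$-subgroup, $p \mid s$. One easily checks $|G| \ge pN > 84(g-1)$ for $p \ge 7$ and $g \ge 2$, so Lemma~\ref{lem:worb_types} applies; Lemma~\ref{lem:max_noOne} and the preceding Type~II proposition rule out cases (I) and (II), leaving (III) and (IV).

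Next I would apply Riemann--Hurwitz to $X \to X/G$. Using Proposition~\ref{hermp^2} to identify the ramification contribution $(s-1)+(c+1)(p-1)$ at each wild-orbit point, one sees that $X/G$ must be rational, since otherwise the wild contribution alone already exceeds $2g-2$. In case (III) the formula reduces to $2g-2 = N\bigl((c+1)(p-1) - s - 1\bigr)$; since $0 \le 2g-2 < N$, the bracketed integer must vanish, forcing $g=1$ and contradicting $g\ge 2$. So we must be in case (IV), with one additional tame orbit of size $m = |G|/t$ and tame stabilizer order $t$; Riemann--Hurwitz then yields
\[
m = N\bigl((c+1)(p-1) - 1\bigr) - (2g-2),
\]
which on multiplying by $t$ gives the divisibility $N \mid t(2g-2)$.

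The main obstacle is ruling out this remaining case. The plan is to couple the above with Theorem~\ref{thm:Fq2_auto}: the tame orbit lies in $\FFF_{p^{2n_0}} \setminus \FFF_{p^{2(n_0-1)}}$ for some $n_0 \ge 2$, and for every other $b \ge 2$, $G$ acts freely on $\FFF_{p^{2b}} \setminus \FFF_{p^2}$, so $|G|$ divides that set's cardinality. Running the reduction modulo $N$ from the proof of Proposition~\ref{prop:aut_divis_rough}, $b=2$ should give $N \mid 2(g-1)(2g+1)$ when $n_0 \ne 2$, and $b=3$ should give $N \mid 8g(g+1)(g-1)$ when $n_0 \ne 3$. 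Since $\gcd(4g(g+1),2g+1)=1$, if $n_0 \notin \{2,3\}$ these combine to $N \mid 2(g-1)$, contradicting $N > 2g$.

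For the remaining values $n_0 \in \{2,3\}$, I would bring in further $b$-values together with the explicit formula for $m$ and the parametrization $g = \tfrac12 c(p-1) + dp$. Writing $g = \tfrac12 p(p-1) - \ell$ for the non-Hermitian curves (so that $N = p^3 - 2\ell p + 1$), the relevant divisibilities reduce modulo $N$ to polynomial conditions in $\ell$ and $p$; the expectation is that these force $\ell = 0$, leaving only the Hermitian curve as consistent. This finite case analysis is the technical obstacle, but each sub-case reduces to a routine divisibility check in $p$, completing the contradiction.
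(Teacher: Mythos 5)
Your setup is sound and partly verifiable: reducing to a Sylow $p$-subgroup of order $p$ via Proposition~\ref{hermp^2}, computing the wild ramification contribution $(s-1)+(c+1)(p-1)$, showing $X/G$ is rational and eliminating the no-tame-orbit case, and even the $b=2$ and $b=3$ divisibilities (I checked that reducing $|S_{(1,2)}|$ and $|S_{(1,3)}|$ modulo $N=p^2+2gp+1$ does give $N\mid 2(g-1)(2g+1)$ and $N\mid 8g(g-1)(g+1)$, so your $n_0\notin\{2,3\}$ argument closes). But the proof has a genuine gap exactly where the lemma's content lies: the cases $n_0\in\{2,3\}$ and, implicitly, the subcase $d=0$ are dispatched with ``the expectation is that these force $\ell=0$.'' These are not routine. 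The Hermitian curve itself has its tame orbit in one of these shells (indeed $N=p^3+1$ fails $N\mid 8g(g-1)(g+1)$ there), so these are precisely the cases that cannot be killed wholesale; and since the quantities you are dividing by $N\sim p^3$ have size $\sim g^2\sim p^4$, each condition leaves on the order of $p$ residual solutions, with no demonstration that the system of congruences collapses to the Hermitian parameters. Worse, when $d=0$ the curves $y^m=x^p-x$ with $m\mid p+1$, $m<p+1$, are genuine maximal curves whose parameters may survive your congruences; they fail the hypothesis only because their automorphism groups split the rational points into two short orbits, a structural fact invisible to divisibility in $(p,g,N)$ alone.

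The paper closes these cases with inputs your proposal never invokes. For $d>0$ it bounds the tame part $r$ of the wild stabilizer by $4d+2$ (the cyclic quotient $G_0(P)/G_1(P)$ acts faithfully on the genus-$d$ curve $X/H$, so Wiman's bound applies), feeds this into Riemann--Hurwitz to force $d\geq\tfrac14(cp^2+p^2-cp-4p-1)$, and then contradicts the known genus spectrum of $\FFF_{p^2}$-maximal curves (Theorem 10.48 of \cite{Hirs13}). For $d=0$ it invokes Lemma~\ref{lem:artin_types} (the Artin--Schreier classification of superspecial curves with rational quotient by an order-$p$ group) and checks directly which curves $y^m=x^p-x$ act transitively on their rational points. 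You would need substitutes for these structural facts --- at minimum the bound on $r$ in terms of $d$ and some classification handle on the $d=0$ family --- for your congruence scheme to terminate.
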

\begin{proof}
We first deal with the case where the maximal $p$-group has order $p$. Write $g = dp + \frac{1}{2}c(p-1)$, and write the size of the stabilizer for a point of the wild orbit as $rp$. Suppose $d \ne 0$. Then $r \le 4d + 2$, as $r$ is the order of a cyclic group in $X/H$, where $H$ is a $p$-group, and $X/H$ has genus $d > 0$. Write $p^2 + 2gp + 1 = N$. Assuming that there is at most one other tamely ramified orbit, we can write
\[\frac{2g - 2}{rpN} = \frac{-k}{t} + \frac{cp + p - c - 2}{rp}\]
But the magnitude of the right hand side, if nonzero, has a lower bound of
\[\frac{cp + p - c - 2}{(rp)(rp + 1)}\]
so
\[\frac{1}{rp^2} \ge \frac{2g-2}{2rgp^2} \ge \frac{2g-2}{rpN} \ge \frac{cp + p - c - 2}{(rp)(rp + 1)}\]
so $rp + 1 \ge cp^2 + p^2 - cp - 2p$, and so $(4d + 2)p + 1 \ge p(cp + p - c - 2)$, and finally
\[d \ge  \frac{1}{4}(cp^2 + p^2 - cp -4p - 1).\]
If $c = 0$, we get $d \ge \frac{1}{4}(p - 4)$. But then the genus is at least $\frac{1}{4}p(p-4)$ and is divisible by $p$, and no maximal curve fits this bill for $p \ge 7$ by checking against Theorem 10.48 of \cite{Hirs13}. If $c > 0$, the only curve in the range is the Hermitian curve.

Then we can assume $d = 0$. This case falls immediately to Lemma \ref{lem:artin_types}. The Hermitian curve is the only example of a curve of the form $y^m = x^p - x$, where $m$ divides $p+1$, whose automorphism group acts transitively on the $\FFF_{p^2}$ rational points.


\end{proof}

\begin{lem}
\label{lem:max_wtcase}
Let $X$ be a $\FFF_{p^2}$-maximal curve of genus at least two. Then $G$ has two short orbits in $X$, one wild and one tame, that have union equal to the set of $\FFF_{p^2}$ rational points if and only if $X$ is isomorphic to a curve given by the equation $y^m = x^p - x$, where $m \bigm| p+1$, $m \ne 1, p+1$.
\end{lem}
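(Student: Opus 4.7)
For the \textbf{if} direction, the plan is direct verification. For $m \bigm| p+1$ with $1 < m < p+1$, the curve $X \colon y^m = x^p - x$ has genus $g = \tfrac{1}{2}(m-1)(p-1) \ge 2$, and a count over $\FFF_{p^2}$ --- the point at infinity, the $p$ points $(x_0, 0)$ with $x_0 \in \FFF_p$, and exactly $m$ preimages under $y \mapsto y^m$ for each of the $p^2 - p$ values $x_0 \in \FFF_{p^2} \sm \FFF_p$ (since $m \bigm| p^2 - 1$ makes every element of $\FFF_{p^2}^*$ an $m$-th power) --- yields $p^2 + 2gp + 1$ rational points, so $X$ is $\FFF_{p^2}$-maximal. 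Its automorphism group, studied as a subgroup of the Hermitian group $\text{PGU}(3, p)$, contains enough elements (the Artin-Schreier translations, the $C_m$-scalings of $y$, the $\FFF_p^*$-dilations $(x, y) \mapsto (\lambda x, \mu y)$ with $\mu^m = \lambda$, and the involution inherited from $\text{PGU}(3, p)$) that $\infty$ is a wild fixed point and the remaining $\FFF_{p^2}$-points form a single tame orbit; the Deuring-Šafarevič formula applied to an order-$p$ subgroup forbids further wild short orbits.

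For the \textbf{only if} direction, I would follow the template of Lemma \ref{lem:max_wcase}. First, Lemma \ref{lem:max_wcase} rules out $X$ being Hermitian (whose $\FFF_{p^2}$-points form a single wild orbit), so Proposition \ref{hermp^2} gives that a Sylow $p$-subgroup of $G$ has order exactly $p$. Writing $g = \tfrac{1}{2}c(p-1) + dp$ and denoting by $rp$ and $s$ the orders of the wild and tame stabilizers, the Riemann-Hurwitz identity reads
\[ \frac{2g - 2}{|G|} = \frac{(c+1)(p-1) - 1}{rp} - \frac{1}{s}, \]
while the hypothesis that the two short orbits exhaust the $\FFF_{p^2}$-points, combined with Theorem \ref{thm:Fq2_auto}, gives
\[ \frac{|G|}{rp} + \frac{|G|}{s} = p^2 + 2gp + 1. \]

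The core step is to show $d = 0$. As in Lemma \ref{lem:max_wcase}, when $d > 0$ we have $r \le 4d + 2$ because $r$ bounds the order of a cyclic tame subgroup acting on the positive-genus quotient $X/H'$. Combining this with the two displayed identities, eliminating $|G|$ and $s$ in the style of Lemma \ref{lem:max_wcase}, forces a lower bound on $g$ of order $p^2/4$, incompatible for $p$ large with $X$ being $\FFF_{p^2}$-maximal and non-Hermitian; consulting the list of possible genera in Theorem 10.48 of \cite{Hirs13} finishes finitely many remaining small cases. I expect this bookkeeping to be the main technical obstacle. Once $d = 0$, the quotient $X/H'$ is rational, so Lemma \ref{lem:artin_types} yields $X \cong (y^m = x^p - x)$ for some $m \bigm| p+1$; the hypothesis $g \ge 2$ rules out $m = 1$, and Lemma \ref{lem:max_wcase} rules out $m = p+1$, leaving precisely the curves claimed.
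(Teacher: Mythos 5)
Your ``only if'' direction is essentially the paper's argument. When you actually carry out the elimination of $|G|$ and $s$ from your two displayed identities (using $|G| = rp(1+np)$ for the wild orbit of size $1+np$ and $|G|/s = p^2+(2g-n)p$ for the tame one), the $r$'s cancel and you land on the exact Diophantine identity $2d(p+1) = (c+1)(n-1)(p-1)$; since $\gcd(p-1,p+1)\le 2$, a nonzero right-hand side forces $d \ge \tfrac14(p-1)$, hence $g \ge \tfrac14 p(p-1)$, which only the Hermitian curve attains --- so $d=0$, $n=1$, and Lemma \ref{lem:artin_types} applies. This is cleaner than the inequality template of Lemma \ref{lem:max_wcase} that you invoke: the bound $r \le 4d+2$ is never needed here, and no ``finitely many small cases'' or largeness assumption on $p$ survives, because the identity is exact. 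So that half is fine in substance, just over-engineered.

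The ``if'' direction as written has two genuine errors. First, $m \mid p^2-1$ does \emph{not} make every element of $\FFF_{p^2}^*$ an $m$-th power (the $m$-th powers form an index-$m$ subgroup); in fact for $x_0 \in \FFF_{p^2}\setminus\FFF_p$ the element $z = x_0^p - x_0$ satisfies $z^{p-1}=-1$, so $z^{(p^2-1)/m} = (-1)^{(p+1)/m}$, and when $(p+1)/m$ is odd the literal equation $y^m = x^p - x$ has only $p+1$ points over $\FFF_{p^2}$ and is nowhere near maximal. The given model is in general only a \emph{twist} of a maximal curve; the paper handles this by observing that $y^m = x^p-x$ is covered by the Hermitian curve (quotients/covers of maximal curves being maximal up to twist), not by counting points on that equation. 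Second, your claimed orbit structure is wrong: $\infty$ is not a singleton wild orbit. The $\mathrm{SL}(2,\FFF_p)$-action $x \mapsto \frac{ax+b}{cx+d}$ moves $\infty$ through all of $\mathbb{P}^1(\FFF_p)$, so the wild short orbit has size $p+1$ (it consists of $\infty$ together with the points $(a,0)$, $a \in \FFF_p$), and the tame orbit has size $mp(p-1)$; these two sizes are what actually sum to $p^2+2gp+1$. Both points are repaired by citing the known automorphism group and its orbits (Theorem 12.11 of \cite{Hirs13}), which is what the paper does, but as stated your verification does not establish the converse.
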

\begin{proof}
Write $1 + np$ for the size of the wild orbit and write $g = dp + \frac{1}{2}c(p-1)$. Then Riemann-Hurwitz gives
\[2g - 2 = -(p^2 + (2g- n)p) + (1 + np)((c+1)(p-1) - 1)\]
which reduces to
\[2d(p + 1) = (c+1)(n-1)(p-1)\]
But $p-1$ and $p+1$ have greatest common factor at most $2$, so $d \ge \frac{1}{4}(p-1)$ unless the right hand side of this expression vanishes. But the only $\FFF_{p^2}$ maximal curve with genus at least $\frac{1}{4}p(p-1)$ is the Hermitian curve, so the right hand vanishes. Then $n$ must be $1$ and $d$ must be zero.




By Lemma \ref{lem:artin_types}, we have that such a curve is of the form $y^m = x^p - x$ with $m$ dividing $p+1$. Taking away the cases $m = 1$ for being rational and $m = p+1$ for being Hermitian and not of this form, we have one direction of the lemma.

However, the curve $y^m = x^p - x$ with $m$ dividing $p+1$ is covered by the Hermitian curve. It is a twist of a maximal curve. Further, per Theorem 12.11 of \cite{Hirs13}, if $m \ne p+1$, the curve has two short orbits, one of size $p+1$ and the other of size $(c+1)p(p-1)$. These partition the rational points between them. For genus $g \ge 2$, we only need that $m > 1$. Then we have the converse.
\end{proof}

At this point we have exhausted all possibilities. Theorem \ref{thm:max} is true.

\subsection{Superspecial curves with irreducible canonical representations}

We can now prove Theorem \ref{thm:sup_irred}.

\begin{proof}
$X$ is isomorphic either to a $\FFF_{p^2}$-maximal or $\FFF_{p^2}$-minimal curve. In the latter case, since the genus of a $\FFF_{p^2}$ minimal curve is at most $\frac{1}{2}(p-1)$, as a curve cannot have negatively many points, we have that the only possible examples of an exception to $|G| \le 84(g-1)$ come from curves isomorphic to the Roquette curve from equation $y^2 = x^p - x$. Then $X$ can be assumed to be $\FFF_{p^2}$ maximal. But then, if $g > 82$ and $|G| > g^2$, then $|G| > 84(g-1)$ too. Further, if $p \le 7$, then we have $g \le 3 \cdot 7 = 21$. Then we can apply Theorem \ref{thm:max} to say that if $X$ has irreducible canonical representation, has $g > 82$, and is not isomorphic to the Roquette curve from $y^2 = x^p - x$, then it is isomorphic to $y^m = x^p - x$ for some $m \bigm| p+1$.
The following proposition then implies the theorem.

\begin{prop}
Let $X$ be the smooth projective model of $y^m=x^p-x$, where $m|p+1$. Then $X$ has an irreducible canonical representation if and only if $m\in\{2,p+1\}$.
\end{prop}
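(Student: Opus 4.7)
The backward direction of the equivalence is immediate from Proposition~\ref{dh}, which records that the Roquette curve $y^2 = x^p - x$ (the case $m=2$) and the Hermitian curve $y^{p+1} = x^p - x$ (the case $m=p+1$) both have irreducible canonical representations. Hence the real content is the forward direction: for $m \mid p+1$ with $2 < m < p+1$, I need to show that the canonical representation of $X : y^m = x^p - x$ is reducible.

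My plan is to decompose $H^0(X, \Omega_X)$ into isotypic components under the cyclic subgroup $C_m = \langle \tau \rangle \subset G$, where $\tau \colon (x,y) \mapsto (x, \zeta_m y)$, and to show that each such component is $G$-invariant. First I would compute a basis of $H^0(X,\Omega_X)$: the differentials $\omega_{i,j} = x^i y^{-j}\, dx$ are holomorphic precisely when $1 \le j \le m-1$ and $0 \le i \le qj - 2$, where $q = (p+1)/m$, as one verifies by a valuation analysis at the $p+1$ totally ramified points of $X \to \mathbb{P}^1$ (namely $\infty$ and the points $(a,0)$ with $a \in \FFF_p$). A count confirms that this produces $(m-1)(p-1)/2 = g_X$ basis elements. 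Since $\tau(\omega_{i,j}) = \zeta_m^{-j}\, \omega_{i,j}$, the $C_m$-isotypic component $V_j$ for the character $\zeta \mapsto \zeta^{-j}$ is spanned by the $\omega_{i,j}$ with $j$ fixed and has dimension $qj - 1$.

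The crux is to establish that $C_m \trianglelefteq G$. For this I would invoke Lemma~\ref{lem:max_wtcase}, which gives that $G$ has a wild short orbit of size $p+1$. This orbit must be $O = \{\infty\} \cup \{(a, 0) : a \in \FFF_p\}$: the point $\infty$ lies in the wild orbit because its stabilizer contains the $p$-subgroup of translations, and the involution-like map $(x, y) \mapsto (1/x,\, c y / x^{(p+1)/m})$ with $c^m = -1$ sends $\infty$ to $(0,0)$. By a standard Riemann-Roch computation at $\infty$, the stabilizer of $\infty$ consists of exactly the maps $(x, y) \mapsto (\alpha x + a,\, \beta y)$ with $\alpha \in \FFF_p^*$, $a \in \FFF_p$, and $\beta^m = \alpha$. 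Such a map pointwise fixes $O$ only when $\alpha = 1$ and $a = 0$, so the kernel of the permutation representation $G \to \mathrm{Sym}(O)$ is exactly $C_m$, whence $C_m$ is normal in $G$.

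Given $C_m \trianglelefteq G$, the group $G$ permutes the $V_j$ via the induced action on $C_m$-characters. Because $q \ge 2$ whenever $m < p+1$, the dimensions $qj - 1$ are all distinct as $j$ ranges over $\{1, \ldots, m-1\}$, so the permutation action must be trivial and each $V_j$ is $G$-stable. For $m \ge 3$, at least two of the $V_j$ are nonzero (e.g.\ $V_1$ of dimension $q - 1 \ge 1$ and $V_2$ of dimension $2q - 1 \ge 3$), yielding proper nontrivial $G$-subrepresentations. Hence the canonical representation of $X$ is reducible, completing the forward direction. The main obstacle is establishing the normality of $C_m$ via the explicit identification of the wild orbit and the stabilizer of $\infty$; the remaining steps are essentially routine.
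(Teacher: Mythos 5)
Your proof is correct, and the overall skeleton matches the paper's: the backward direction is quoted from Proposition~\ref{dh}, and the forward direction rests on the same basis $\{x^i\,dx/y^j\}$ with $1\le j\le m-1$, $0\le i\le m'j-2$ and on exhibiting the $C_m$-eigenspaces as $G$-invariant. Where you genuinely diverge is in \emph{how} invariance is established. The paper takes the explicit generators of $\mathrm{Aut}(X)$ from \cite[Thm 12.11]{Hirs13} --- the $\zeta_m$-action and the $SL(2,\mathbb{F}_p)$-action $(x,y)\mapsto\bigl(\tfrac{ax+b}{cx+d},\tfrac{y}{(cx+d)^{m'}}\bigr)$ --- and checks by direct computation that the single subspace $V_1=\langle x^i\,dx/y\rangle$ is stable, since $\sigma(x^i\,dx/y)=(ax+b)^i(cx+d)^{m'-2-i}\,dx/y$. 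You instead prove that $C_m$ is normal in $G$ (identifying the wild orbit $O$ of size $p+1$ via Lemma~\ref{lem:max_wtcase} and showing the kernel of $G\to\mathrm{Sym}(O)$ is exactly $C_m$ by a Weierstrass-gap/Riemann--Roch analysis of the stabilizer of $\infty$), and then conclude that $G$ fixes every isotypic component $V_j$ because their dimensions $m'j-1$ are pairwise distinct. Your route buys a stronger structural conclusion (the full decomposition $H^0(X,\Omega)=\bigoplus_j V_j$ into $G$-subrepresentations) and avoids needing the explicit generator formulas, at the cost of the stabilizer computation, which you should spell out: the Weierstrass semigroup at $\infty$ is $\langle m,p\rangle$, so an automorphism fixing $\infty$ sends $x\mapsto\alpha x+a$ and $y\mapsto\beta y+P(x)$, and the curve equation forces $P=0$, $a\in\mathbb{F}_p$, $\alpha\in\mathbb{F}_p^*$, $\beta^m=\alpha$. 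Note also that both arguments ultimately lean on the same external input, since Lemma~\ref{lem:max_wtcase}'s orbit description is itself extracted from \cite[Thm 12.11]{Hirs13}, so neither is more self-contained than the other.
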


To prove this proposition, we first find a basis for $H^0(X,\Omega^1)$. Let $m'=\frac{p+1}{m}$. The other affine part of the curve will have equation $v^m=u-u^p$, where $u=\frac{1}{x}$ and $y=\frac{y}{x^{m'}}$. We have the following.
\begin{enumerate}
\item $x$ has a zero of order $m$ at $(x,y)=(0,0)$ and a pole of order $m$ at $\infty$ (there is only one point at $\infty$, corresponding to $(u,v)=(0,0)$), and no poles or zeroes anywhere else. Thus the divisor associated with $x$ is \[m\cdot (0,0)-m\cdot\infty.\]
\item $y$ has a zero of order $1$ at $(x,y)=(a,0)$ for all $a\in\mathbb{F}_p$, a pole of order $p$ at $\infty$, and no poles or zeroes anywhere else. Thus the divisor associated with $y$ is \[\displaystyle\sum_{a\in\mathbb{F}_p}(a,0)-p\cdot\infty.\]
\item $dx$ has a pole of order $m+1$ at $\infty$. Since $y$ has a zero of order $1$ at $(a,0)$ for $a\in\mathbb{F}_p$, $dy$ has no zeroes or poles there. Now, $dy=\frac{dx}{my^{m-1}}$ ($\gcd(m,p)=1$ as $m|p+1$), $dx$ must have a zero of order $m-1$ at all such points. Furthermore, it cannot have any other zeroes or poles, as shown by the equation $dy=\frac{dx}{my^{m-1}}$. Therefore, the canonical divisor associated to $dx$ is \[(m-1)\displaystyle\sum_{a\in\mathbb{F}_p}(a,0)-(m+1)\cdot\infty.\]
\end{enumerate}
The degree of the canonical divisor is thus $p(m-1)-(m+1)=pm-p-m-1$. By Riemann-Roch, this must equal $2\cdot g_X-2$, so $g_X=\frac{(p-1)(m-1)}{2}$.

Now, if $1\leq j\leq m-1$ and $0\leq i\leq\left\lfloor\frac{pj-1}{m}-1\right\rfloor$, then since the canonical divisor associated to $\frac{x^idx}{y^j}$ is
\[(m-1-j)\displaystyle\sum_{a\in\mathbb{F}_p}(a,0)+(pj-mi-(m+1))\cdot\infty,\]
we have that $\frac{x^idx}{y^j}\in H^0(X,\Omega^1)$ (since $m-1-j\geq 0$, $pj-mi-(m+1)\geq 0$).

Substituting $p=mm'-1$ into $i\leq\left\lfloor\frac{pj-1}{m}\right\rfloor-1$, we obtain
\begin{align*}
i & \leq\left\lfloor\frac{mm'j-j-1}{m}\right\rfloor-1 \\ & =m'j-1-\left\lceil\frac{j+1}{m}\right\rceil \\ & =m'j-2.
\end{align*}
Furthermore, it is clear that all of the $\frac{x^i dx}{y^j}$ are independent. The number of such elements with $1\leq j\leq m-1$ and $0\leq i\leq m'j-2$ is
\[m'\frac{m}{m-1}{2}-(m-1)=\frac{(p+1)(m-1)}{2}-(m-1)=\frac{(p-1)(m-1)}{2},\]
so these elements form a basis of $H^0(X,\Omega^1)$.

We now claim that the subspace generated by $\frac{x^i dx}{y}$, $0\leq i \leq m'-2$, is invariant under the automorphism group provided that $m<p+1$.  In this case, the automorphism group is generated by the following two transformations by \cite[Thm 12.11]{Hirs13}:
\begin{enumerate}
\item $\zeta_m$, an $m$th root of unity acting via $(x,y) \to (x,\zeta_m y)$.  
\item $\sigma = \begin{pmatrix} a & b \\ c & d \end{pmatrix} \in SL(2,\mathbb{F}_p)$ acting via $$(x,y)\to \left(\frac{ax+b}{cx+d},\frac{y}{(cx+d)^{m'}}\right).$$
\end{enumerate}

$\zeta_m$ visibly acts on the basis vectors simply by multiplication.  Finally, note that 
\begin{equation*} \sigma\left(\frac{x^idx}{y}\right) = (ax+b)^i(cx+d)^{m'-2-i}\frac{dx}{y}
\end{equation*}
is also a sum of the basis vectors of this subspace.  This is a proper subspace when $m>2$.  Thus, when $2<m<p+1$, the canonical representation is not irreducible.  When $m\in \{2,p+1\}$, the canonical representation is irreducible as stated in Proposition \ref{dh}.

\end{proof}

\section{Automorphisms of Ordinary Curves}

In this section, we prove Theorem \ref{main}, improving upon a bound by Nakajima \cite{Naka87} on the automorphisms of ordinary curves.  The proof of the theorem borrows many techniques from the similar theorem in \cite{Naka87}, but improves the bound by using asymptotics and some more detailed arguments.  
\begin{proof}[Proof of Theorem \ref{main}]
Let $G= \text{Aut}(X)$ and $Y= X/G$. If $|G| > 84(g-1)$, then $Y$ has genus zero and Lemma \ref{lem:worb_types} applies. This splits the proof of the theorem into four cases, each corresponding to a type of curve from this Lemma.

\textbf{Case I}. This case has $p$ an odd prime, with one point $P \in Y$ wildly ramified, two points $Q, Q'$ tamely ramified with ramification index $2$, and no other ramified points. Choose $P_1 \in X$ over $P$. Then we can write $|G^{(0)}_{P'}| = Eq$, with $q$ a power of $p$ and $p \nmid E$. Hurwitz gives
\[2\left( \frac{g-1}{|G|}\right) = -2 + \frac{2}{2} + \frac{Eq + q - 2}{Eq} = \frac{q-2}{Eq}\]
But $Eq$ is the size of a subgroup of $G$, so we have
\[ 2\left(\frac{g-1}{q-2}\right)  = \frac{|G|}{Eq}\]
is an integer. Since $q-2$ is odd, we have $(q-2) | (g - 1)$. Write $a = \frac{g-1}{q-2}$. Also, $E$ divides $q-1$ by \cite[Proposition 1]{Naka87}, so write $q-1 = dE$. Then
\[|G| = 2aEq = \frac{2a}{d}\left(\frac{g-1}{a} + 1\right)\left(\frac{g-1}{a} + 2\right) = \frac{2(g + a - 1)(g + 2a - 1)}{ad}.\]
But $1 \le a < g$, so this equation forces $|G| \le 2(2g - 1)(g+1) \le 5g^2$, finishing this case.

\textbf{Case II}. In this case, there are two wildly ramified points in $Y$ and no other points. Renaming the points if necessary, we write $e_{Q_1} = E_1q$, $e_{Q_2} = E_2 q q'$ with $E_1, E_2$ integers not divisible by $p$. Then Hurwitz gives
\[2\left(\frac{g-1}{|G|}\right) = -2 + \frac{E_1q + q - 2}{E_1q} + \frac{E_2qq' + qq' -2}{E_2qq'} \]
If $qq' = q = 2$, this equation forces $X$ to be an elliptic curve, against  our assumption $g_X \ge 2$. We then assume $qq' > 2$.

Define $b_1$ and $b_2$ so $b_1E_1 = q - 1$, $b_2E_2 = qq' - 1$. $b_1$ and $b_2$ are positive integers by \cite[Proposition 1, Lemma 2]{Naka87}. We then rewrite the above equation as
\[\frac{|G|}{qq'} = \frac{2(g-1)(q-1)(qq'-1)}{b_2(q-1)(qq' - 2) + b_1(qq'-1)q'(q-2)}\]
Then
\[\frac{|G|}{qq'}  \le \frac{2(g-1)(qq'-1)}{b_2(qq'-2)}\]
with equality if and only if $q = 2$.
If $3 \le qq' \le 14$, this gives
\[|G| \le \frac{14 \cdot 13(g-1)}{6b_2}\]
which is less than $g^2$ for $g \ge 30$. Assuming $qq' \ge 15$ for the rest of this case, we have
\begin{equation}
\label{eq:15_7}
|G| \le \frac{15qq'(g-1)}{7b_2}
\end{equation}

We now split into three subcases. First, suppose $q \ne 2$ and $q' \ne 1$. $E_2qq'$ is the size of a subgroup of $G$, so
\[\frac{b_2|G|}{qq'(qq' - 1)} = \frac{2b_2(g-1)(q-1)}{b_2(q-1)(qq' - 2) + b_1(qq'-1)q'(q-2)}\]
is an integer. But the greatest common denominator of $q-1$ and the denominator will need to divide $b_1 (q' - 1)$, so
\[\frac{2b_2b_1(g-1)(q'-1)}{b_2(q-1)(qq' - 2) + b_1(qq'-1)q'(q-2)}\]
is also an integer. Then
\[\frac{2b_2(g-1)(q'-1)}{q'(qq' - 1)(q-2)} \ge 1\]
so $\frac{1}{b_2}(qq' - 1) \le 2(g-1)$ for $q > 2$. Then Equation \ref{eq:15_7} gives
\[|G| \le \frac{32}{7}(g-1)^2.\]

For the next subcase, take $q = 2$ and $q' \ne 1$.
In this case, our equation is
\[\frac{|G|}{2q'} = \frac{(g-1)(2q' - 1)}{b_2(q' - 1)}.\]
The left is an integer, so we find that $(q' - 1) | (g-1)$. Write $a(q' - 1) = (g-1)$. We find
\[|G| = 2aq'\frac{2q' - 1}{b_2} = \frac{1}{ab_2} 2(g + a - 1)(2g + a - 2)\]
which from $1 \le a < g$ gives $|G| \le 6g^2$.

If $q = 2$ and $q' = 1$, we would have $qq' = 2$, which we already dealt with.

Finally, suppose $q' = 1$. We have
\[\frac{b_2|G|}{q(q-1)} = \frac{2b_2(g-1)}{(b_1+b_2)(q-2)}\]
and
\[\frac{b_1|G|}{q(q-1)} = \frac{2b_1(g-1)}{(b_1+b_2)(q-2)}\]
are integers. Adding these, we see $a(q-2) = 2(g-1)$ for some $a$. Then
\[|G| = 2(g-1) \frac{q(q-1)}{(b_1 + b_2)(q-2)} \le \frac{15}{14}(g-1)2g \le 3g^2\]
where we are using $q = qq' \ge 15$. This finishes Case II.

\textbf{Case III}.  \cite{Naka87} shows that this case is impossible for ordinary curves.

\textbf{Case IV}.  Let $e_{Q_1} = Eq$ and $e_{Q_2} = e$, where $q=p^n$ and $(E,p)= (e,p) = 1$.  This implies that $\frac{d_{Q_1}}{e_{Q_1}} = \frac{Eq+q-2}{Eq}$ and $\frac{d_{Q_2}}{e_{Q_2}} = \frac{e-1}{e}$.  Applying the Riemann-Hurwitz formula, we have 

\begin{equation}
\label{hurwitz4}
 \frac{2g_X-2}{|G|} = \frac{(e-E)q-2e}{Eqe}.
 \end{equation}

By \cite[Lemmas 1,2]{Naka87}, it suffices to consider when $E > (\frac{g_X}{21})^{\frac{1}{2}}$, and thus, we take $E$ to be large in the following analysis.  Our strategy for the remainder of Case III is to first prove two bounds on $|G|$ in terms of $E,q,$ and $g_X$ in Lemma \ref{mainbounds}.  Then, we bound $E$ and $q$ in terms of $g_X$ to finish the proof.  

\begin{lem}\label{mainbounds}
If $E \ge 444$, we have the following estimates of $|G|$:
\begin{equation}
\label{lambda} |G|\leq 2(1+7E^{-1})Eq(g_X-1)
\end{equation}

\begin{equation}
\label{mu} |G| \leq 2(3+37E^{-1})E^2(g_X-1)
\end{equation}
\end{lem}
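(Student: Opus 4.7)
The starting point is to rearrange the Riemann–Hurwitz identity \eqref{hurwitz4} as
\[|G| = \frac{2(g_X-1)\,Eqe}{e(q-2)-Eq}\]
and introduce $\delta := e(q-2)-Eq$, a positive integer. Since $Eq$ and $e$ are stabilizer orders, both divide $|G|$; this forces $\delta \mid 2(g_X-1)e$ and $\delta \mid 2(g_X-1)Eq$, hence (using $\gcd(e,q)=1$) $\delta \mid 2(g_X-1)\gcd(e,E)$. Writing $d = \gcd(e,E)$, $e=de'$, $E=dE'$ with $\gcd(e',E')=1$, we obtain $\delta = d\delta'$ where
\[\delta' := e'(q-2)-E'q \text{ is a positive integer dividing } 2(g_X-1).\]
Back–substitution collapses the expression for the group order to
\[|G| \;=\; \frac{2(g_X-1)\,E\,q\,e'}{\delta'} \;=\; \frac{2(g_X-1)\,E\,q}{q-2}\left(1+\frac{E'q}{\delta'}\right).\]

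An elementary algebraic manipulation now reduces the two sought-after inequalities to explicit lower bounds on $\delta'$: bound \eqref{lambda} is equivalent to
\[\delta' \;\geq\; \frac{E\,E'\,q}{E(q-3)+7(q-2)},\]
and bound \eqref{mu} is equivalent to
\[\delta' \;\geq\; \frac{E'\,q^2}{(3E+37)(q-2)-q}.\]
The plan from here is to establish each of these using three inputs: (i) integrality of $e' = (\delta'+E'q)/(q-2)$, which forces the congruence $\delta' \equiv -2E' \pmod{q-2}$; (ii) the divisibility $\delta' \mid 2(g_X-1)$; and (iii) the inequality $E' \leq E \leq q-1$ coming from \cite[Prop.~1]{Naka87}. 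The congruence in (i) constrains $\delta'$ to an arithmetic progression of common difference $q-2$, so in most ranges the smallest positive representative is already comparable to the target. The hypothesis $E\geq 444$ is precisely what is needed to swallow the slack constants $7$ and $37$ on the right-hand sides.

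\textbf{Main obstacle.} The awkward regime is the one where $2E'$ lies close to a multiple of $q-2$, so that the smallest positive solution of $\delta'\equiv -2E'\pmod{q-2}$ is much smaller than the required lower bound. In this regime the congruence alone is insufficient and one must fall back on (ii): a candidate small $\delta'$ must divide $2(g_X-1)$, and combining this with the standing Case IV hypothesis $E > (g_X/21)^{1/2}$ together with $E\geq 444$ should eliminate such candidates, or else one can verify the desired inequality directly by plugging the few remaining values into the formula for $|G|$. I expect the bulk of the technical work to consist of a case split on the size of $E'$ relative to $q-2$ (say $E' \leq (q-2)/14$ versus larger), with the precise constants $7$ and $37$ falling out of optimizing the threshold in each sub-case. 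This boundary analysis is where the difficulty concentrates; the rest is bookkeeping.
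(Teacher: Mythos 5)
Your setup is correct as far as it goes: the identity $|G| = 2(g_X-1)Eqe/\delta$ with $\delta = e(q-2)-Eq$, the factorization $\delta = \gcd(e,E)\,\delta'$ with $\delta'\mid 2(g_X-1)$, and the algebraic equivalence of \eqref{lambda} and \eqref{mu} with lower bounds on $\delta'$ all check out. But the three inputs you propose to prove those lower bounds --- the congruence $\delta'\equiv -2E'\pmod{q-2}$, the divisibility $\delta'\mid 2(g_X-1)$, and the inequality $E'\leq E\leq q-1$ --- are provably insufficient, so the "main obstacle" you flag cannot be closed by the fallback you describe. Concretely, writing $\varepsilon' = e'-E'$ one has $\delta' = \varepsilon'(q-2)-2E'$; take $\varepsilon'=1$, $E'=E=\tfrac{q-3}{2}$, $e=E+1$. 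Then $\delta'=1$, which satisfies your congruence, trivially divides $2(g_X-1)$, and respects $E\leq q-1$, yet it gives $|G| = (g_X-1)\,q(q-1)(q-3)/2$, exceeding the bound \eqref{lambda} by a factor of roughly $q/2$. Nothing in your list rules this out, and "plugging in" a small $\delta'$ only makes $|G|$ larger, which is the conclusion you are trying to forbid, not a contradiction you can exploit.

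The missing ingredient is the full strength of Nakajima's Proposition 1 and Lemma 2: not merely $E\leq q-1$ but $E\mid q-1$ (and $e>E$). This is exactly what kills the configuration above, since $E=\tfrac{q-3}{2}$ dividing $q-1=2E+2$ forces $E\mid 2$, incompatible with $E\geq 444$. The paper's proof is built entirely on this divisibility: it sets $d=(q-1)/E$ and $\varepsilon=e-E$, both positive integers, so that $\delta = dE\varepsilon-2E-\varepsilon$ and positivity of $\delta$ forces the integer $d\varepsilon-2$ to be at least $1$; from $\lambda = \delta/e = (d\varepsilon-2)-(d\varepsilon-1)\tfrac{\varepsilon}{E+\varepsilon}$ one then reads off $\lambda\geq 1-6/E$ and, similarly, $\mu\geq \tfrac13-4/E$ (minimized at $d=3$, $\varepsilon=1$), with no congruence analysis or case split on residues at all. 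If you want to salvage your route, you must import $E\mid q-1$ as a fourth input, at which point the arithmetic-progression argument becomes superfluous and you essentially recover the paper's computation.
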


\begin{proof}
Define $d=(q-1)/E$ and $\varepsilon = e-E$.  It is shown in \cite[Proposition 1, Lemma 2]{Naka87} that $d, \varepsilon$ are positive integers.  We then define the variables 
$$\lambda = \frac{dE\varepsilon -2E -\varepsilon}{E+\varepsilon}$$ and $$\mu = \frac{dE\varepsilon-2E-\varepsilon}{(d+E^{-1})(E+\varepsilon)}$$ 

so that \eqref{hurwitz4} gives us 
\begin{equation}
\label{Glambdamu} |G| =\frac{2Eq}{\lambda}(g_X-1) = \frac{2E^2}{\mu}(g_X-1)
\end{equation}

First, we bound $\lambda$ from below.  If $\varepsilon \geq 4$, then certainly $\lambda \geq \frac{E\varepsilon}{E+\varepsilon} -2 >1$.  Otherwise, we have that $\lambda = (d\varepsilon-2) - (d\varepsilon -1)\frac{\varepsilon}{E+\varepsilon}$ and $\varepsilon \leq 3$.  But $\lambda >0$ implies that $d\varepsilon-2$ is a positive integer, and thus 
$$\lambda = (d\varepsilon-2) - (d\varepsilon -1)\frac{\varepsilon}{E+\varepsilon} \geq 1-\frac{6}{E}.$$  

Next, we claim that $\mu \geq \frac{1}{3} - \frac{4}{E}$.  As before, it is easy to see that if $\varepsilon \geq 3$, then we are done.  Thus, assume that $\varepsilon <3 $ and we have $$\mu = \frac{d\varepsilon -2}{d+E^{-1}} - \frac{\varepsilon(d\varepsilon -1)}{(d+E^{-1})(E+\varepsilon)} \geq \frac{d\varepsilon -2}{d} - \frac{4}{E}.$$

Since $\mu >0$, we have $d\varepsilon >2$.  Hence, this is minimized at $d=3$, $\varepsilon =1$, in which case we get our desired bound of $$\mu \geq \frac{1}{3} - \frac{4}{E}.$$  


Finally, we have the bounds

\begin{align*}
\frac{1}{\mu} &\leq \left(\frac{1}{3}- \frac{4}{E}\right)^{-1} \leq 3+\frac{37}{E}
\\ \frac{1}{\lambda} &\leq \left(1- \frac{6}{E}\right)^{-1} \leq 1+\frac{7}{E} \end{align*}

for all $E \ge 444$, and the result follows.

\end{proof}

Now we bound $E^2$ and $Eq$ by a multiple of $g_X$.  Let $P_1\in X$ be such that $\pi_{X/Y}(P_1) = Q_1.$  Let $Z=X/G_1(P_1)$ and $W=X/G_0(P_1)$.  We have a sequence of maps $X\to Z\to W$ such that $|\text{Gal}(X/Z)| = q$ and $W$ is the quotient of $Z$ by $G_0(P_1)/G_1(P_1)$, which is a cyclic group of order $E$ by Proposition \ref{ramprop}.  We now have two cases depending on whether or not $Z$ is rational.  

\begin{lem}Theorem \ref{main} holds in the case $g_Z\geq 1$.  
\end{lem}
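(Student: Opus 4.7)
The plan is to convert the assumption $g_Z \geq 1$ into explicit bounds on $E$ and $q$ in terms of $g_X$, and then apply Lemma \ref{mainbounds}. By Proposition \ref{ramprop}, the quotient $G_0(P_1)/G_1(P_1)$ is cyclic of order $E$ and acts tamely on the ordinary curve $Z = X/G_1(P_1)$; moreover, it fixes the image $\bar{P_1}$ of $P_1$, and $Z$ is ordinary since it is the quotient of the ordinary $X$ by a $p$-group. A Riemann-Hurwitz computation for this cyclic action, carefully tracking ramification contributions from both the full cyclic group and its subgroups, yields a Wiman-type bound $E \leq 4g_Z + 2$. Independently, the Deuring-\v{S}afarevi\v{c} formula applied to $X \to Z$, a $p$-group cover totally ramified at $P_1$ with ramification index $q$, gives $(g_X - 1)/q \geq g_Z - 1/q$ and hence $g_Z \leq g_X/q$.

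Combining these yields $E \leq 4g_X/q + 2$, hence $Eq \leq 4g_X + 2q$. I would then split into two subcases according to the size of $q$ relative to $\sqrt{g_X}$. When $q$ is sufficiently large (roughly $q \geq 4\sqrt{g_X}$), we have $E \leq \sqrt{g_X} + O(1)$, so $E^2 \leq g_X + O(\sqrt{g_X})$, and Lemma \ref{mainbounds}(ii) gives $|G| \leq 2(3 + 37/E)E^2(g_X - 1) \leq 6g_X^2 + O(g_X^{3/2})$, which fits comfortably within $6(g_X^2 + 12\sqrt{21}\,g_X^{3/2})$. When $q$ is small, Lemma \ref{mainbounds}(i) with $Eq \leq 4g_X + O(\sqrt{g_X})$ gives a bound of the same order. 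The edge case $g_Z = 1$ is handled separately: an elliptic curve has at most finitely many automorphisms fixing a point (no more than $24$ in any characteristic), so $E$ is then bounded by an absolute constant and $|G| = O(g_X)$, trivially within the target.

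The main obstacle is calibrating the threshold between the two subcases and sharpening the Wiman-type bound just enough to ensure the leading constant stays at $6$ in every regime, particularly in the crossover zone $q \approx 2\sqrt{g_X}$ where neither of the inequalities of Lemma \ref{mainbounds} dominates cleanly. Resolving this may require a tighter form of the cyclic bound—for instance, $E \leq 2g_Z + 1$ under the minimal-ramification assumption on $Z \to Z/\langle\sigma\rangle$ that is forced when $G_1(P_1)$ fixes only $P_1$—or a direct interpolation using the explicit expression $|G| = 2Eq(g_X-1)/\lambda = 2E^2(g_X-1)/\mu$ established in the proof of Lemma \ref{mainbounds}. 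Packaging these refinements carefully, along with the auxiliary input $E \mid q - 1$ from \cite[Proposition 1]{Naka87}, is what produces the precise error term $72\sqrt{21}\,g_X^{3/2}$ claimed in Theorem \ref{main}.
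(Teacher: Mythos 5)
Your overall architecture matches the paper's: pass to $Z = X/G_1(P_1)$, use the cyclic group $C_E \cong G_0(P_1)/G_1(P_1)$ acting on $Z$ with a fixed point to bound $E$ in terms of $g_Z$, combine with $g_X \geq q g_Z$, and feed the result into Lemma \ref{mainbounds}. But there is a genuine quantitative gap at the step you yourself flag as ``the main obstacle,'' and neither of your proposed repairs closes it. With only the Wiman-type bound $E \leq 4g_Z + 2$ and $g_Z \leq g_X/q$, the best you can extract is $Eq \leq 4g_X + 2q$, hence $|G| \leq 2(1+7E^{-1})Eq(g_X-1) \approx 8g_X^2$; the constant $8$ cannot be improved to $6$ by interpolating between the two estimates of Lemma \ref{mainbounds}, because $2Eq(g_X-1)/\lambda$ and $2E^2(g_X-1)/\mu$ are two exact expressions for the \emph{same} quantity $|G|$, so no case split on $q$ versus $\sqrt{g_X}$ gains anything in the crossover regime. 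Your alternative suggestion, a bound $E \leq 2g_Z+1$, is simply false: the extremal configuration $E \approx 4g_Z$ is genuinely realized by hyperelliptic curves of the form $y^2 = x\left(x^{E/2}-a\right)$, which carry a cyclic automorphism group of order $E$ fixing a point.

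What the paper does instead, and what is missing from your proposal, is a finer analysis of the orbit structure of $C_E$ on $Z$. Writing the short-orbit lengths as $l_1,\dots,l_{s+1}$ with $l_{s+1}=1$ and applying Riemann--Hurwitz to $Z \to Z/C_E$, the paper shows (using an auxiliary quotient $Z/C_a$ with $a = \mathrm{lcm}(E/l_1, E/l_2)$ to force the $l_i$ pairwise coprime) that $E \leq 3g_Z + 3$ in every case \emph{except} when $s=2$, $l_1 = E/2$, $l_2 \in \{1,2\}$ --- and in that exceptional case it identifies $Z$ explicitly as the hyperelliptic curve $y^2 = x\left(x^{E/2}-a\right)$ and computes its Hasse--Witt matrix to show it is \emph{not ordinary} once $E > 6(p-1)$. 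Since $Z$ must be ordinary (you correctly note this via Deuring--\v{S}afarevi\v{c}), the exceptional family forces $g_X \leq 756(p-1)^2$, which is absorbed into the constant $c(p)$ of the theorem. This use of ordinarity to kill precisely the Wiman-extremal family is the key idea your argument lacks; without it the leading constant stays at $8$ rather than $6$. (Your separate treatment of $g_Z = 1$ is fine, since $E \leq 24$ there and the standing assumption $21E^2 > g_X$ bounds the genus outright.)
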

\begin{proof}
Observing that the point $P_1$ is totally wildly ramified in $\pi_{X/Z}$, we have by the Riemann-Hurwitz formula: 
$$2g_X-2 \geq q(2g_Z-2) + 2(q-1)$$ which implies that $g_X\geq qg_Z$.  

We now bound $g_Z$ from below by a multiple of $E$ so that we may apply Lemma \ref{mainbounds} to get: 
\begin{equation}
\label{lem3bound}|G|\leq 2(1+7E^{-1})Eq(g_X-1) \leq 2(1+7E^{-1})\left(\frac{E}{g_Z}\right)g_X(g_X-1).
\end{equation}  To do so, we examine the quotient map $Z\to W = Z/C_E$ where $C_E \cong G_0(P_1)/G_1(P_1)$ denotes the cyclic group of order $E$.  Hereafter, for any positive integer $a|E$, we identify $C_a$ with the unique order $a$ cyclic subgroup of $C_E$.  Let the lengths of the short orbits of $C_E$ acting on $Z$ (that is, the orbits of size less than $E$) be $l_1, \cdots, l_{s+1}$.  We have that $l_i | E$.  We may assume that $l_{s+1} = 1$ corresponding to the image of the point $P_1$ in $Z$, which is fixed by all of $G_0(P_1)/G_1(P_1)$.  Applying the Riemann-Hurwitz formula to the map $Z\to W$ yields 
\begin{equation}\label{ZWhurwitz}
2g_Z-2 = E(2g_W-2) + \sum_{i=1}^{s+1} (E-l_i).\end{equation}

If $g_W>0$, then we have $2g_Z -2 \geq E-l_{s+1} = E-1$ which implies the result by (\ref{lem3bound}).  Thus, we assume $g_W=0$.  Then, we note that $2g_Z-2\geq 0$ and $\frac{E}{2} \leq  E- l_i \leq E-1$.  By \ref{ZWhurwitz}, we conclude that $s\geq 2$.  If $s\geq 4$, we have that 
\begin{align*}
2g_Z-2 = -2E + (E-1) + \sum_{i=1}^s(E-l_i) &\geq -2E + (E-1) + 4\left(\frac{E}{2}\right) 
\\ &= E-1 \end{align*}
and we are done as before.  Hence, we have two remaining cases, $s=2$ and $s=3$.  

\textbf{Subcase 1: $s=2$}. We have that $2g_Z-2 = E - l_1 - l_2 - 1$.  We let $a = \text{lcm}\left(\frac{E}{l_1}, \frac{E}{l_2}\right)$ and let $Z' = Z/C_a$.  Then, we observe that by the choice of $a$, the map $\pi_{Z'/W}$ is a map of degree $\frac{E}{a}$ which is only ramified at $\pi_{X/W}(P_1)$, where it is totally ramified.  It follows that by the Riemann-Hurwitz formula on $\pi_{Z'/W}$, we have that $2g_{Z'}-2 = -\frac{2E}{a} + (\frac{E}{a} - 1) = -\frac{E}{a} -1$.  But $g_{Z'} \geq 0$, so it follows that $a= E$ and therefore, $(l_1,l_2)=1$.  Assume without loss of generality that $l_1\geq l_2$.  It follows from the fact that $l_1$ and $l_2$ are relatively prime integers dividing $E$ that $l_1 l_2\leq E$.  Hence, we have that either $l_1+l_2 \leq \frac{E}{3} + 3$ or $l_1 = \frac{E}{2}$.  In the former case, we immediately have that $E\leq 3g_Z+3$ and thus $\frac{E}{g_Z} \leq 3(1-\frac{3}{E})^{-1} \leq 3(1+\frac{4}{E})$.  Thus, we have that by (\ref{lem3bound}), 
\begin{align}\label{s=2bound}
|G|\leq 6(1+12E^{-1})g_X(g_X-1)&\leq 6\left(1+12\sqrt{\frac{21}{g_X}}\right)g_X(g_X-1) 
\\\ &\leq 6(g_X^2 +12\sqrt{21}g_X^{\frac{3}{2}}) \nonumber \end{align}
 as desired.  

We now assume that $E$ is even and $l_1= \frac{E}{2}$.  It follows that $l_2$ is either $1$ or $2$.  We claim that $Z$ is not ordinary in both cases.  This would contradict the Deuring-\v{S}afarevi\v{c} formula applied to $\pi_{X/Z}$.    Let $Z'' = Z/C_2$ so that the map $\pi_{Z''/W}$ is a degree $\frac{E}{2}$ map with exactly two ramification points, $R_1 = \pi_{X/W}(P_1)$ and another point $R_2\in W$.  By the Riemann-Hurwitz formula applied to $\pi_{Z''/W}$, we have that $$2g_{Z''}-2 = \frac{E}{2}(-2)+2(E-1)$$ which implies that $g_{Z''} = 0$.  Therefore, since $\pi_{Z/Z''}$ has degree $2$, $Z$ is hyperelliptic.  In fact, since $g_{Z''} = g_W = 0$ and $\pi_{Z''/W}$ is a degree $\frac{E}{2}$ map which is totally ramified over $2$ points, we may assume up to automorphisms of $Z''$ and $W$ that $\pi_{Z''/W}$ is the map $z\mapsto z^{\frac{E}{2}}$.  Let $Z$ be the hyperelliptic curve given by $y^2 = f(x)$ such that the map $Z\to Z''$ is given by $(x,y)\mapsto x$.  Then, $\text{Gal}(Z''/W)\in \text{Aut}(Z'')$ is a cyclic group of order $\frac{E}{2}$ such that any element $\gamma \in \text{Gal}(Z''/W)$ fixes $0$ and $\infty$ and has the property that $(\gamma z)^{\frac{E}{2}} = z^{\frac{E}{2}}$.  Hence, $\gamma$ is multiplication by an $\frac{E}{2}$-th root of unity.  In fact, if $\gamma$ is chosen to be a generator of this cyclic group, then $\gamma$ is multiplication by a primitive $\frac{E}{2}$-th root of unity.  It follows that the map $Z\to Z'$ is branched over $0$, possibly $\infty$ depending on whether $l_2$ is $1$ or $2$, and $\frac{E}{2}$ other points which are permuted by $\text{Gal}(Z''/W)$.  Hence, these points must be related by multiplication by some $\frac{E}{2}$-th root of unity.  Thus, we have that $f(x) = x\left(x^\frac{E}{2}-a\right)$ and $Z$ is the hyperelliptic curve given by the equation $y^2 = x\left(x^{\frac{E}{2}}-a\right)$ for some $a\in k$.  We show that this curve is not ordinary for sufficiently large $E$.

This is a standard computation; we simply compute the action of Frobenius on a basis of $H^1(Z, \OO_Z)$ and show that the matrix of Frobenius is not invertible. For this hyperelliptic curve, a basis of $H^1(Z, \OO_Z)$ is given by $\{ \frac{y}{x^i} \}$ for $i=1,2,\cdots ,\left\lfloor\frac{E}{4}\right\rfloor$.  
If $p=2$, then Frobenius sends $\frac{y}{x^i}$ to $\frac{x^{\frac{E}{2}}-a}{x^{2i}}=0$ in $H^1(Z, \OO_Z)$, so in fact $Z$ is supersingular and we can ignore this case.  Otherwise, assume that $p>2$ and Frobenius sends
\begin{equation*}
\frac{y}{x^i} \mapsto \frac{y^p}{x^{pi}}=\frac{y\left(x\left(x^{\frac{E}{2}}-a\right)\right)^{\frac{p-1}{2}}}{x^{pi}}.
\end{equation*}
 Therefore, the $(i,j)$ entry of the matrix of Frobenius with respect to this basis, known as the \emph{Hasse-Witt matrix}, is
\[[pi-j]\left(x\left(x^{\frac{E}{2}}-a\right)\right)^{\frac{p-1}{2}}=\left[pi-j-\frac{p-1}{2}\right]\left(x^{\frac{E}{2}}-a\right)^{\frac{p-1}{2}},\]
where $[n]f(x)$ refers to the $x^n$ coefficient of $f(x)$. Thus the $(i,j)$ entry is nonzero if and only if $\frac{E}{2}|pi-j-\frac{p-1}{2}$, or equivalently, $pi-\frac{p-1}{2}\equiv j\pmod{\frac{E}{2}}$. 

Suppose $Z$ is ordinary. Then for each $i$, row $i$ must have some nonzero entry.  Thus, since $1\leq j\leq\left\lfloor\frac{E}{4}\right\rfloor$, we have that for each $i$, the smallest positive integer $n$ such that $$pi-\frac{p-1}{2}\equiv n \text{ } \left(\text{mod }\frac{E}{2} \right)$$ satisfies $1\leq n\leq \left\lfloor\frac{E}{4}\right\rfloor$.

Let $i=\left\lfloor\frac{E}{2p}\right\rfloor$.  First assume that $i\neq 0$.  Then, $pi-\frac{p-1}{2}$ is between $\frac{E}{2}-p+1-\frac{p-1}{2}$ and $\frac{E}{2}-\frac{p-1}{2}$.  Since its value modulo $\frac{E}{2}$ must be between $1$ and $\left\lfloor\frac{E}{4}\right\rfloor$, we have $\frac{E}{2}-p+1-\frac{p-1}{2}\leq\frac{E}{4}$, so $E\leq 6(p-1)$ and $g_X\leq 21E^2=756(p-1)^2$. Thus, $g_X$ is bounded by some $c=c(p)$ on the order of $p^2$ and we have finished this case.  Now, note that if $i=0$, we trivially have the same bounds.  

\textbf{Subcase 2: $s=3$}. We have that $2g_Z-2 = 2E - l_1 - l_2 - l_3 - 1$.  By the same argument as above, we obtain $(l_1,l_2,l_3) = 1$ and thus $l_1+l_2+l_3 \leq E + 2$ and we are done by the same bounds as in (\ref{s=2bound}).

\end{proof}

For the remainder of the proof, we assume that $g_Z=0$, which implies that $g_W=0$.  Applying the Riemann-Hurwitz formula to the cyclic cover $\pi_{Z/W}$ immediately yields that there must be exactly two ramification points, and they both must be totally ramified.  Let them be $R_1,R_2\in W$ such that $\pi_{X/W}(P_1)=R_1$.  We have the following lemma:

\begin{lem}If there exists $R_3\in W$, $R_3 \neq R_1, R_2$, which is ramified under $\pi_{X/W}$, then Theorem \ref{main} holds.  
\end{lem}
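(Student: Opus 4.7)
The plan is to apply Riemann-Hurwitz to the Galois cover $\pi_{X/W}$, which has degree $Eq$ and base genus $g_W = 0$, and use the extra branch point $R_3$ to force $Eq \le 2g_X$; the desired bound on $|G|$ will then follow immediately from Lemma \ref{mainbounds}. The central observation is a restriction on stabilizer types at $R_3$. Since $\pi_{Z/W}$ is only ramified at $R_1$ and $R_2$, the point $R_3$ is unramified in $Z \to W$, so any $\sigma \in G_0(P_1) \setminus G_1(P_1)$ projects to a nontrivial element of $C_E = \text{Gal}(Z/W)$; if $\sigma$ fixed a point $P_3 \in X$ above $R_3$, its image would fix a point of $Z$ above $R_3$, a contradiction. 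Hence $\text{Stab}_{G_0(P_1)}(P_3)$ is a $p$-subgroup of $G_1(P_1)$, of some order $p^a \ge p$. Combined with the ordinariness condition $G_2 = \{1\}$ from Proposition \ref{ramprop}, this gives a different $d_{P_3} = 2(p^a - 1)$, and summing over the fiber of size $Eq/p^a$ yields a contribution $\Delta_{R_3} = 2Eq(1 - p^{-a}) \ge Eq$ to the ramification divisor.

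For the contribution from $R_2$, multiplicativity of ramification indices in the tower $X \to Z \to W$ gives that each point above $R_2$ in $X$ has ramification index $E \cdot p^b$ for some $b \ge 0$. Such a stabilizer has normal $p$-Sylow of order $p^b$ with cyclic quotient of order $E$; using $G_2 = \{1\}$ again, the different at each point is $Ep^b + p^b - 2$, and summing over the $q/p^b$ points above $R_2$ yields $\Delta_{R_2} = qE + q - 2q/p^b \ge qE - q$. Combining with the standard contribution $d_{P_1} = Eq + q - 2$ at the totally ramified point $P_1$ and plugging into Riemann-Hurwitz, I obtain
\[2g_X - 2 \;\ge\; -2Eq + (Eq + q - 2) + (qE - q) + Eq \;=\; Eq - 2,\]
so $Eq \le 2g_X$.

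Applying Lemma \ref{mainbounds}(\ref{lambda}) then gives $|G| \le 2(1 + 7E^{-1})Eq(g_X - 1) \le 4(1 + 7E^{-1})g_X(g_X - 1)$. Under the standing assumption $E > (g_X/21)^{1/2}$, this is bounded by $4g_X^2 + 28\sqrt{21}\,g_X^{3/2}$, which is strictly dominated by $6(g_X^2 + 12\sqrt{21}\,g_X^{3/2})$ for $g_X$ exceeding an absolute constant (taken large enough to also ensure $E \ge 444$ so that Lemma \ref{mainbounds} applies). The main obstacle is the stabilizer analysis at $R_3$, since the $p$-group conclusion is exactly what unlocks the ordinariness-based different calculation; once that is in place, the $R_2$ contribution follows from tower multiplicativity and the conclusion is a direct substitution into the prior bound.
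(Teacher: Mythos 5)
Your proof is correct and takes essentially the same route as the paper: both arguments rest on the observation that $R_3$ is unramified in $Z\to W$, so the stabilizer of a point over $R_3$ lies in the $p$-group $G_1(P_1)$, whence $e_{R_3}\mid q$ and (by ordinariness, $G_2=\{1\}$) $d_{R_3}=2e_{R_3}-2$; feeding this into Riemann--Hurwitz for $\pi_{X/W}$ forces $Eq\le 2g_X$ (the paper phrases this as $d_{R_3}/e_{R_3}\ge 2-2/p$), and Lemma \ref{mainbounds} then gives the bound. Your more refined computation of $d_{R_2}$ from the stabilizer structure is fine but unnecessary --- the paper's cruder estimate $d_{R_2}/e_{R_2}\ge 1-1/E$ already suffices.
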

\begin{proof}
In $\pi_{X/W}$, $R_1$ is totally ramified by definition, so $d_{R_3} = Eq +q -2$ and $e_{R_3} = Eq$.  $R_2$ is totally ramified in $\pi_{Z/W}$, so we have that $e_{R_2} \geq E$ and thus $\frac{d_{R_2}}{e_{R_2}} \geq \frac{e_{R_2}-1}{e_{R_2}} \geq 1-\frac{1}{E}$.  Finally, since $R_3$ is unramified in $\pi_{Z/W}$, $e_{R_3}$ divides $q = \text{deg }\pi_{X/Z}$.  Hence, $d_{R_3} = 2e_{R_3}-2$ so we have that $\frac{d_{R_3}}{e_{R_3}} = 2 - \frac{2}{e_{R_3}}\geq 2-\frac{2}{p}$.  Finally, we apply the Riemann-Hurwitz formula to $\pi_{X/Z}$ which yields: 
$$\frac{2g_X-2}{Eq} \geq -2 + \frac{d_{R_1}}{e_{R_1}}+\frac{d_{R_2}}{e_{R_2}}+\frac{d_{R_3}}{e_{R_3}} \geq 2 -\frac{2}{Eq} - \frac{2}{p}.$$

Rearranging and using (\ref{lambda}), we have $$|G| \leq 2Eq(1+7E^{-1})(g_X-1) \leq 2\left(\frac{1+7E^{-1}}{1-\frac{1}{Eq}-\frac{1}{p}}\right)(g_X-1)^2.$$  The conclusion follows by letting $E$ grow large and using $p\geq 2$.  
\end{proof}

Now suppose that $R_1$ and $R_2$ are the only ramification points of $\pi_{X/W}$ in $W$. In this final case, we will show a stronger result; that is, that $|G|\leq g_X^2$ for $g_X$ sufficiently large, except possibly for one particular family of cases. As before, assume $E\geq 1$. Also assume $|G|>g_X^2$, and as usual we can and will also assume that $21E^2>g_X$.
\begin{lem}\label{dlargelemma}
In this case, $d\geq E-2$.
\end{lem}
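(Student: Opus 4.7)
The plan is to use the hypothesis that $\pi_{X/W}$ ramifies only at $R_1, R_2$ to pin down the structure of the tower $X \to Z \to W$, then reduce the inequality $q \ge (E-1)^2$ to an elementary divisibility fact about prime powers.

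First, I would analyze the $p$-group Galois cover $\pi_{X/Z}$ of $Z = \mathbb{P}^1$ of degree $q = p^n$. Every $\pi_{X/Z}$-ramified point must lie above a $\pi_{X/W}$-ramified point, so $\pi_{X/Z}$ ramifies only above $\tilde{R}_1$ (totally, with index $q$) and $\tilde{R}_2$ with some index $p^k$, where $1 \le k \le n-1$; the case $k = n$ is excluded since Deuring-\v{S}afarevi\v{c} would then force $g_X = 0$. Writing $m := p^{n-k}$, the same formula applied to $\pi_{X/Z}$ yields $g_X = q - m$.

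Next I would determine the image of $R_2$ in $Y$. The $m$ preimages of $R_2$ in $X$ each carry a $G_0(P_1)$-stabilizer of order $Ep^k$, which contains an element of order $p$ and hence cannot sit inside the tame cyclic $G$-stabilizer above $Q_2$; therefore these points lie above $Q_1$. Decomposing the $G$-orbit above $Q_1$ into $G_0(P_1)$-orbits then gives the singleton $\{P_1\}$, the length-$m$ orbit above $R_2$, and possibly $s$ further free orbits of length $Eq$, whence
\[|G| = Eq(m+1) + sE^2q^2.\]
Comparing with $|G| \le 2(1+7/E)Eq(g_X - 1)$ from Lemma \ref{mainbounds} forces $s = 0$, so $|G| = Eq(m+1)$.

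Substituting $|G| = Eq(m+1)$ and $g_X = q - m$ into the Riemann-Hurwitz identity \eqref{hurwitz4} and simplifying produces the relation $\varepsilon(m-1) = 2E$. Independently, every preimage of $Q_2$ in $X$ has trivial $G_0(P_1)$-stabilizer (no such point can lie above $R_1$ or $R_2$), so these preimages split into free $G_0(P_1)$-orbits of length $Eq$; their count $|G|/(e \cdot Eq) = (m-1)/E$ must be a positive integer, forcing $E \mid m-1$. Combined with $\varepsilon(m-1) = 2E$ and $\varepsilon \ge 1$, this leaves only the two possibilities $m \in \{E+1,\, 2E+1\}$.

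Finally, I would exploit that $m = p^j$ is a prime power while $E \mid q - 1$. If $m = E + 1$ then $E = p^j - 1$; the relation $(p^j - 1) \mid (p^n - 1)$ forces $j \mid n$, and $k \ge 1$ gives $n > j$, so $n \ge 2j$ and $q \ge p^{2j} = (E+1)^2 > (E-1)^2$. If $m = 2E + 1$ then $p$ is odd and $E = (p^j - 1)/2$; an analogous argument, using the inequality $2(p^r - 1) < p^j - 1$ for $1 \le r < j$ and $p \ge 3$, shows that $(p^j - 1)/2 \mid (p^n - 1)$ still forces $j \mid n$, again giving $q \ge p^{2j} = (2E+1)^2 > (E-1)^2$. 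Either way, $d = (q-1)/E \ge E - 2$.

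The main obstacle will be the orbit-counting step that yields $|G| = Eq(m+1)$ and $E \mid m-1$: since $W \to Y$ need not be Galois, one must track the decomposition of $G$-orbits into $G_0(P_1)$-orbits via double cosets with some care. Once those two ingredients are in hand, the remaining divisibility analysis is essentially elementary.
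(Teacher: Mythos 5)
Your route is genuinely different from the paper's. The paper imports everything from Nakajima: Lemma 5 of \cite{Naka87} gives $E\mid q'-1$, $E\mid q''-1$ and $q'\neq 1$ (where $q'$ is the order of the stabilizer in $H=G_1(P_1)$ of a point over $R_2$ and $q''=q/q'$), and then either $q''=1$, in which case Nakajima's bound $E\leq\sqrt{q}+1$ gives $d\geq E-2$ directly, or $q''\neq 1$, in which case $E^2\leq(q'-1)(q''-1)<q$ gives $d\geq E$. You instead re-derive the needed divisibility by hand: the orbit count over $Q_1$ combined with \eqref{lambda} pins down $|G|=Eq(m+1)$ with $m=q''$, Riemann--Hurwitz applied to \eqref{hurwitz4} then gives $\varepsilon(m-1)=2E$, the orbit count over $Q_2$ gives $E\mid m-1$, and the prime-power fact that $(p^j-1)\mid(p^n-1)$ forces $j\mid n$ closes the argument. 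I have checked these steps and they are sound (and in fact yield the stronger conclusion $d\geq E+2$ when $m>1$); this is a nice self-contained replacement for the citation-heavy proof.

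There is, however, one concrete error. You exclude $k=n$ on the grounds that Deuring--\v{S}afarevi\v{c} would force $g_X=0$. That is backwards: with both $\tilde{R}_1$ and $\tilde{R}_2$ totally ramified in $\pi_{X/Z}$ the formula gives $(g_X-1)/q=-1+2(1-1/q)$, i.e.\ $g_X=q-1$, not $0$; it is $k=0$ (no second ramified point) that forces $g_X=0$. The case $k=n$ is exactly Nakajima's $q''=1$, which the paper must treat separately via $E\leq\sqrt{q}+1$ and which is the only case where the bound degrades to $d\geq E-2$ rather than $d\geq E$ --- so as written you have silently dropped the hardest case. Fortunately it is recoverable inside your own framework: for $m=1$ the orbit count still gives $|G|=2Eq$ once $s=0$, and then \eqref{hurwitz4} forces $e=\varepsilon$, i.e.\ $E=0$, a contradiction, so for $E$ large this case is vacuous and the lemma survives. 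You need to either run that computation explicitly or fall back on Nakajima's bound here; the justification currently given for discarding the case is false.
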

\begin{proof}
Again letting $H=G_1(P_1)$, we have that $H$ is an elementary abelian group of order $q$. Choose some $P_2$ mapping to $R_2$. Let $N=H_0(P_2)=H_1(P_2)$. As in \cite{Naka87}, we let $q'=|N|$ and $q''=|H/N|$. Then $q'q''=q$.

By the argument in Lemma 5 of \cite{Naka87}, $E$ divides both $q'-1$ and $q''-1$, and $q'\neq 1$.
Therefore, either
\[q''=1\]
or
\[E^2\leq (q'-1)(q''-1)<q'q''=q.\]
In the former case, \cite[p. 606]{Naka87} shows that $E\leq\sqrt{q}+1$. This implies that $E^2-2E\leq q-1$, and so $d=\frac{q-1}{E}\geq E-2$, so Lemma \ref{dlargelemma} holds. In the latter case, $E^2\leq q-1$, which implies that $d=\frac{q-1}{E}\geq E$. Thus Lemma \ref{dlargelemma} holds here as well.
\end{proof}
\begin{lem}\label{ep3lemma}
If $\epsilon\geq 3$, then $g_X\leq 364$.
\end{lem}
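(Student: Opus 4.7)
My plan is to leverage the rigid structure imposed by the subcase hypothesis---that only $R_1$ and $R_2$ are ramified in $\pi_{X/W}$---by studying the ramification of $\pi_{X/W}$ at $R_2$ through the factorization $\pi_{X/W}=\pi_{Z/W}\circ\pi_{X/Z}$. Recall that $\pi_{Z/W}$ is a cyclic cover of degree $E$ totally ramified at both $R_1$ and $R_2$, as established above.

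First, I would distinguish cases according to whether the image of $R_2$ in $Y$ is the tame point $Q_2$ or the wild point $Q_1$. Suppose $R_2$ lies over $Q_2$. Then any preimage $P_2'\in X$ of $R_2$ has $G$-stabilizer of order $e=E+\varepsilon$, coprime to $p$, so its $G_0(P_1)$-stabilizer has some order $E^*$ coprime to $p$ with $E^*\mid E$. Applying Riemann-Hurwitz to $\pi_{X/W}$ (which has $g_W=0$ and only $R_1,R_2$ ramified), a direct computation yields
\[ 2g_X-2 \;=\; q\!\left(1-\frac{E}{E^*}\right)-2 \;\le\; -2, \]
forcing $g_X\le 0$ and contradicting $g_X\ge 2$. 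Hence $R_2$ must lie over $Q_1$.

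In this remaining case, the stabilizer in $G_0(P_1)$ of any preimage $P_2'$ of $R_2$ has tame-wild structure of order $E_2 q_2$ with $E_2\mid E$ and $q_2$ a power of $p$ dividing $q$ (higher ramification being trivial by Proposition \ref{ramprop}). Riemann-Hurwitz for $\pi_{X/W}$ now yields the explicit formula
\[ g_X \;=\; \frac{(r+1)q-2rq'}{2}, \qquad r=E/E_2,\ \ q'=q/q_2. \]
Substituting into $|G|>g_X^2$ via the identity $|G|=2Eqe(g_X-1)/(\varepsilon q-2e)$, and taking leading-order estimates in $q$, one obtains $q < 8Ee/((r+1)^2\varepsilon)$. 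In the worst case $r=q'=1$ with $\varepsilon\ge 3$, combining with $q\ge(E-1)^2$ from Lemma \ref{dlargelemma} forces $\varepsilon(E-1)^2<2E(E+\varepsilon)$, giving $E\le 11$ and in turn $q\le 113$, so $g_X=q-1\le 112$. For larger $r$ or $q'$ the constraint on $q$ becomes tighter and $g_X$ is strictly smaller, so the bound $g_X\le 364$ holds uniformly.

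The main obstacle will be the case analysis over the various admissible structures $(E_2,q_2)$ of the intersection $G_0(P_1)\cap G_{P_2'}$; but since the worst case $r=q'=1$ dominates, once one verifies the Riemann-Hurwitz bookkeeping carefully the bound follows with significant room to spare.
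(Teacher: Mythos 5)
Your ramification bookkeeping for $\pi_{X/W}$ is essentially sound (in fact $r=1$ always: $\pi_{Z/W}$ is totally ramified at $R_2$, so the prime-to-$p$ part of the stabilizer of $P_2'$ in $G_0(P_1)$ is exactly $E$), and your elimination of the case where $R_2$ lies over $Q_2$ correctly reproduces Nakajima's observation that the $p$-part of that stabilizer is nontrivial. The genus formula you get, $g_X=q-q/q_2$ in your $r=1$ case, matches the formula $g_X=q-q''$ underlying the paper's citation of \cite[Lemma 5]{Naka87}. The gap is in the quantitative step. The inequality $q<8Ee/((r+1)^2\varepsilon)$ is asserted via ``leading-order estimates'' but never derived, and it does not follow from what you have in hand. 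Writing $|G|>g_X^2$ out exactly gives $2Eqe(g_X-1)>g_X^2(\varepsilon q-2e)$, hence $\varepsilon q-2e<2Eqe/g_X$, and everything now hinges on a sufficiently strong \emph{lower} bound for $g_X$. In your worst case $q_2=q$ (so $g_X=q-1$) this works and yields roughly $q<2e(E+1)/\varepsilon$, so your numbers are in the right ballpark there. But when $q_2<q$ your formula only gives $g_X=q(1-1/q_2)\ge q/2$, and substituting that yields $q<(4E+2)e/\varepsilon$, which combined with $q\ge(E-1)^2$ imposes \emph{no} bound on $E$: for $\varepsilon=3$ the inequality $3(E-1)^2<(4E+2)(E+3)$ holds for every $E$.

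Moreover, your reason for dismissing those remaining cases --- that larger $q'$ makes ``the constraint on $q$ tighter and $g_X$ strictly smaller'' --- is backwards: shrinking $g_X$ relative to $q$ makes $|G|>g_X^2$ \emph{easier} to satisfy, so the constraint weakens rather than tightens. To close the gap you need an extra input such as $g_X\ge(q'-1)(q''-1)\ge E^2$ when both parts are nontrivial (from \cite[Lemma 5]{Naka87}, using $E\mid q'-1$ and $E\mid q''-1$), which restores a quadratic-in-$E$ lower bound on $g_X$ and lets the computation force $E$ to be small. This is precisely what the paper's proof does more economically: it never passes through the explicit genus formula, but instead bounds $\mu$ below as an increasing function of $d$, substitutes $d\ge E-2$ from Lemma \ref{dlargelemma}, and uses the single uniform estimate $g_X+1\ge(E-1)^2$ (valid whether or not $q''=1$) to conclude $E\le 20$. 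As written, your argument establishes the lemma only in the subcase $q''=1$.
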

\begin{proof}
\[\mu=\frac{dE\epsilon-2E-\epsilon}{(d+E^{-1})(E+\epsilon)}=\frac{E\epsilon}{E+\epsilon}\frac{d}{d+E^{-1}}-\frac{2E+\epsilon}{(d+E^{-1})(E+\epsilon)},\]
which is an increasing function in $d$. Therefore, we can substitute $d\geq E-2$ to obtain
\[\mu\geq\frac{(E-2)E\epsilon-2E-\epsilon}{(E-2+E^{-1})(E+\epsilon)}.\]
Using $2E\leq 2E\epsilon$ in the numerator and $E^{-1}\leq 1$ in the denominator, we have that
\[\mu\geq\frac{(E-4)E\epsilon-\epsilon}{(E-1)(E+\epsilon)}=\frac{\epsilon(E^2-4E-1)}{(E-1)(E+\epsilon)}.\]
This last expression is increasing in $\epsilon$, so if $\epsilon\geq 3$,
\[\mu\geq\frac{3(E^2-4E-1)}{E^2+2E-3}.\]
Substituting into the expression $|G|=\frac{2E^2}{\mu}(g_X-1)$,
\[|G|\leq\frac{2E^2(E^2+2E-3)}{3(E^2-4E-1)}(g_X-1)\]
(Assuming that this expression is positive, i.e. $E\geq 5$.)

Now, since $|G|>g_X^2$, this means
\[\frac{2E^2(E^2+2E-3)}{3(E^2-4E-1)}\geq\frac{|G|}{g_X-1}\geq\frac{g_X^2}{g_X-1}>g_X+1.\]
However, \cite{Naka87} also shows that if $q''\neq 1$, then $E^2<g_X$, and if $q''=1$, then $E\leq\sqrt{g_X+1}+1$. Either way, $g_X+1\geq (E-1)^2$. Therefore,
\[\frac{2E^2(E^2+2E-3)}{3(E^2-4E-1)}>(E-1)^2.\]
Therefore, $E\leq 20$.

Since $g_X+1<\frac{2E^2(E^2+2E-3)}{3(E^2-4E-1)}$ for $E\geq 5$, this means that if $5\leq E\leq 20$, $g_X+1<366$. Thus if $E\geq 5$, $g_X\leq 364$, so the lemma holds. If $E\leq 4$, we have the bound $E\geq\sqrt{\frac{g}{21}}$, so in this case, $g\leq 21\cdot 4^2=336$, so the lemma also holds here.
\end{proof}
We now consider the case $\epsilon\leq 2$. Recall that $\epsilon=e-E$. We rearrange the expression
\[\frac{2g_X-2}{|G|}=\frac{(e-E)q-2e}{Eqe}\]
into
\[\frac{|G|}{Eqe}=\frac{2g_X-2}{(e-E)q-2e}=\frac{2g_X-2}{dE\epsilon-2E-\epsilon}.\]
Now, note that since both $Eq$ and $e$ are the size of a stabilizer of a point in $X$ under $G$, $|G|$ is divisible by both $Eq$ and $e$. Now, $\gcd(q,e)=1$, as $p\nmid e$, and $\gcd(E,e)|e-E=\epsilon$. Therefore, $Eqe$ divides $\epsilon|G|$, and so
\[\frac{\epsilon(2g_X-2)}{dE\epsilon-2E-\epsilon}\]
is a positive integer.

We first consider the case $q''=1$. Then \cite{Naka87} shows that $g_X=q-1=dE$. Therefore,
\[\frac{2dE\epsilon-2\epsilon}{dE\epsilon-2E-\epsilon}\]
is an integer. Subtracting $2$,
\[\frac{4E}{dE\epsilon-2E-\epsilon}\]
is an integer. But $dE\epsilon-2E-\epsilon\geq dE-2E-1\geq E^2-4E-1>4E$ for $E\geq 9$. Therefore, $E\leq 9$. Thus $g_X<21E^2=1701$.

Now, suppose $q''\neq 1$. We have $E|q'-1$ and $E|q''-1$. Let $q'=p^{a'}$ and $q''=p^{a''}$. Then $E|\gcd{q'-1,q''-1}=p^{\gcd{a',a''}}-1$. Therefore, $E\leq p^{\gcd(a',a'')}-1$. Suppose $a'\neq a''$. Then $E^3<(p^{\gcd(a',a'')}-1)(p^{2\gcd(a',a'')}-1)\leq(q'-1)(q''-1)$, since one of $a'$ or $a''$ must be at least $2\gcd(a',a'')$. By the argument in Lemma $5$ of \cite{Naka87}, we have that $g_X\geq(q'-1)(q''-1)$. Therefore, $g_X\geq E^3$. But $21E^2>g_X$, so $E\leq 20$, implying that $g_X<21\cdot 20^2=8400$. Therefore, if $g_X\geq 8400$, then $q'=q''=\sqrt{q}$.

In this case, $\sqrt{q}=p^n$ for some positive integer $n$, and $q'=q''=p^n$. Then the argument in Lemma $5$ of \cite{Naka87} (applying the Hurwitz formula to the map $X\to Z$) shows that $g_X=p^{2n}-p^n$. Therefore,
\[\frac{\epsilon(2g_X-2)}{(\epsilon q-2e)}=\frac{2\epsilon(p^{2n}-p^n-1)}{\epsilon p^{2n}-2e}\]
is an integer, so subtracting $2$,
\[\frac{2(2e-\epsilon p^n-\epsilon)}{\epsilon p^{2n}-2e}\]
is an integer.
Now, since $E|q'-1=p^n-1$, $E\leq p^n-1$ and thus $e=E+\epsilon\leq p^n+1$, as $\epsilon\leq 2$. Therefore,
\[|2(2e-\epsilon p^n-\epsilon)|\leq\max(4e,2\epsilon p^n+2\epsilon)\leq 4p^n+4.\] Therefore, if $2e-\epsilon p^n-\epsilon\neq 0$,
\[4p^n+4\geq\epsilon p^{2n}-2e\geq p^{2n}-2p^n-2.\]
Therefore, $p^n<7$, so $p^n\leq 5$. Thus $E\leq p^n-1=4$, so $g_X\leq 21\cdot 4^2=336<8400$.

Therefore, if $g_X\geq 8400$, then $2e-\epsilon p^n-\epsilon=0$. This implies that $e=\frac{\epsilon}{2}(p^n+1)$, and thus $E=e-\epsilon=\frac{\epsilon}{2}(p^n-1)$. This also means that the integer
\[\frac{\epsilon|G|}{Eqe}=\frac{\epsilon(2g_X-2)}{(\epsilon q-2e)}\]
must in fact be equal to $2$. Thus if $\epsilon=1$, then $|G|=2Eqe=2\frac{p^n-1}{2}\cdot p^{2n}\cdot\frac{p^n+1}{2}=\frac{p^{4n}-p^{2n}}{2}$. But if $|G|>g_X^2=(p^{2n}-p^n)^2$, we have that
\[(p^{4n}-p^{2n}>2(p^{2n}-p^n)^2,\]
so $p^n=2$. But then $E=1$, a contradiction. Therefore, $\epsilon=2$. Thus $E=p^n-1$, and $e=p^n+1$. Since $\frac{2|G|}{Eqe}=2$, $|G|=Eqe=p^{4n}-p^{2n}$.

Therefore, if $g_X\geq 8400$ and $|G|>g_X^2$, then $E=p^n-1$, $e=p^n+1$, $q=p^{2n}$, $g_X=p^{2n}-p^n$, and $|G|=p^{4n}-p^{2n}$ for some positive integer $n$. Theorem \ref{main} trivially follows from this stronger result in this case.
\end{proof}

\bibliography{references}{}
\bibliographystyle{amsplain}

\end{document}